\newtheorem{theorem}{Theorem}[section]
\newtheorem{prop}[theorem]{Proposition}
\newtheorem{lemma}[theorem]{Lemma}
\newtheorem{cor}[theorem]{Corollary}
\theoremstyle{remark}
\newtheorem{definition}[theorem]{Definition}
\newtheorem{remark}[theorem]{Remark}
\def\co{\colon\thinspace}
\def\ep{\epsilon}
\def\K{\mathbb{K}}
\def\R{\mathbb{R}}
\def\N{\mathbb{N}}
\def\Z{\mathbb{Z}}
\def\rk{\mathop{rank}\nolimits}
\def\Diff{\mathop{\mathrm{Diff}}\nolimits}
\title{Linking and the Morse complex}
\author{Michael Usher}
\date{\today}
\address{Department of Mathematics\\University of Georgia\\Athens, GA 30602}
\email{\href{mailto:usher@math.uga.edu}{usher@math.uga.edu}}
\begin{document}

\begin{abstract}
For a Morse function $f$ on a compact oriented manifold $M$, we show that $f$ has more critical points than the number required by the Morse inequalities if and only if there exists a certain class of link in $M$ whose components have nontrivial linking number, such that the minimal value of $f$ on one of the components is larger than its maximal value on the other.  Indeed we characterize the precise number of critical points of $f$ in terms of the Betti numbers of $M$ and the behavior of $f$ with respect to links.  This can be viewed as a refinement, in the case of compact manifolds, of the Rabinowitz Saddle Point Theorem.  Our approach, inspired in part by techniques of chain-level symplectic Floer theory, involves associating to collections of chains in $M$ algebraic operations on the Morse complex of $f$, which yields relationships between the linking numbers of homologically trivial (pseudo)cycles in $M$ and an algebraic linking pairing on the Morse complex.
\end{abstract}

\maketitle

\section{Introduction} \label{intro}

Let $f\co M\to \R$ be a Morse function on a compact $n$-dimensional manifold $M$; thus around each critical point $p$ of $f$ there are coordinates $(x_1,\ldots,x_n)$ in terms of which $f$ is given by the formula \[ f(x_1,\ldots,x_n)=-\sum_{i=1}^{k}x_{i}^{2}+\sum_{i=k+1}^{n}x_{i}^{2} \] for some integer $k$ called the \emph{index} of $p$ and denoted in this paper by $|p|_f$.  For each integer $k$ let $c_k(f)$ denote the number of critical points of $f$ having index $k$, and define the Morse polynomial of $f$ by \[ \mathfrak{M}_f(t)=\sum_{k=0}^{n}c_k(f)t^k.\]  Meanwhile if $\K$ is a field let $\mathfrak{b}_k(M;\K)$ be the rank of the $k$th homology $H_k(M;\K)$ with coefficients in $\K$ and define the Poincar\'e polynomial of $M$ with coefficients in $\K$ to be \[ \mathfrak{P}_M(t;\K)=\sum_{k=0}^{n}\frak{b}_{k}(M;\K)t^k.\]  One way of expressing the famous \emph{Morse inequalities} is to say that one has \[ \mathfrak{M}_f(t)=\mathfrak{P}_M(t;\K)+(1+t)\mathfrak{Q}_f(t;\K) \] for some polynomial $\mathfrak{Q}_f(t;\K)=\sum_{k=0}^{n-1}q_{k}(f;\K)t^k$ all of whose coefficients $q_{k}(f;\K)$ are nonnegative.  Indeed, using the gradient flow of 
$f$ it is possible to construct a chain complex $(CM_{*}(f;\K),d_f)$ such that $CM_{k}(f;\K)$ is a $\K$-vector space of dimension $c_k(f)$ and such that the homology of the complex is isomorphic to $H_{*}(M;\K)$, and then the coefficients $q_{k}(f;\K)$ of the polynomial $\mathfrak{Q}_f(\cdot;\K)$ are the ranks of the differentials $d_{f,k+1}\co CM_{k+1}(f;\K)\to CM_{k}(f;\K)$.

In particular, for any coeffcient field $\K$, the number of critical points of index $k$ for any Morse function $f$ obeys $c_k(f)\geq \frak{b}_{k}(f;\K)$, and equality holds in this inequality if and only if $q_{k}(f;\K)$ and $q_{k-1}(f;\K)$ are equal to zero.  Thus a nonzero value of $q_k(f;\K)$ corresponds to $f$ having ``extra'' critical points in indices $k$ and $k+1$.  This paper is concerned with giving alternate interpretations of the numbers $q_k(f;\K)$, in terms of the \emph{linking} of homologically trivial cycles in $M$.  (Actually, we will generally work with pseudocycles (see \cite[Section 6.5]{MS} and Section \ref{pseudolink} below for precise definitions) instead of cycles; in view of results such as \cite[Theorem 1.1]{Z} this will encode essentially the same information.  In particular it makes sense to ask whether a given pseudocycle is homologically trivial; our convention is that a \emph{pseudoboundary} is by definition a homologically trivial pseudocycle.)

Although our methods are rather different, our results are conceptually related to results along the lines the Saddle Point Theorem of \cite{R}, which assert under various rather general hypotheses that for a function $f\co M\to \R$ (where $M$ is, say, a Banach manifold) which satisfies the Palais--Smale condition, if there are null-bordant submanifolds $A,B\subset M$ such that $\inf_B f>\sup_A f$ and $A$ and $B$  are linked in the sense that any other submanifold whose boundary is $A$ must intersect $B$, then $f$ must have a critical point with critical value at least $\inf_B f$.  Various extensions and refinements of this result have appeared; for instance one can see from \cite[Theorems II.1.1$'$, II.1.5]{C} that if $\dim A=k$ then one can arrange to find a critical point of $f$ whose local Morse homology is nontrivial in degree $k$, and so the critical point will have index $k$ provided that it is nondegenerate.  However if $M$ has nontrivial singular homology in degree $k$ and if $f$ is \emph{any} Morse function then $f$ will automatically have critical points of index $k$, which might seem to indicate that in this case the linking condition in the hypothesis 
of the Saddle Point Theorem only leads to critical points whose existence can be explained just from the homology of $M$.  On the contrary, we show in this paper that, at least in the finite-dimensional, compact case, a certain type of linking is rather precisely associated with  ``extra'' critical points:

\begin{theorem}\label{main1} Let $f\co M\to \R$ be a Morse function on a compact oriented $n$-dimensional manifold $M$, and let $\K$ be any ring\footnote{In this paper ``ring'' means ``commutative ring with unity.''}.  The following are equivalent:
\begin{itemize}
\item[(i)] The Morse boundary operator\footnote{To construct the Morse boundary operator one needs to choose an auxiliary Riemannian metric; however its triviality or nontriviality is independent of this choice.} $d_{f,k+1}\co CM_{k+1}(f;\K)\to CM_k(f;\K)$ is nontrivial (\emph{i.e.}, in our earlier notation when $\K$ is a field, $q_k(f;\K)\neq 0$.)
\item[(ii)] There are pseudoboundaries $b_{\pm}\co B_{\pm}\to M$,  where $\dim B_+=k$ and $\dim B_-=n-k-1$, such that $\overline{Im(b_-)}\cap \overline{Im(b_+)}=\varnothing$ and the $\K$-valued linking number $lk_{\K}(b_-,b_+)$ is nonzero, and such that $\min(f|_{\overline{Im(b_-)}})>\max(f|_{\overline{Im(b_+)}})$.
\end{itemize}
Moreover, if (i) holds, then from the stable and unstable manifolds of $f$ associated to a suitable Riemannian metric, one may construct a pair of pseudoboundaries $b_{\pm}\co B_{\pm}\to M$ of dimensions $k$ and $n-k-1$ with $lk_{\K}(b_-,b_+)\neq 0$ such that the value of $\min(f|_{\overline{Im(b_-)}})-\max(f|_{\overline{Im(b_+)}})$ is as large as possible.
\end{theorem}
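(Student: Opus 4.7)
The natural strategy is to shuttle between the $f$-filtered Morse complex $(CM_\ast(f;\K),d_f)$ and pseudochains assembled from stable and unstable manifolds. For (i)$\Rightarrow$(ii), I fix a Morse-Smale Riemannian metric and, for $v=\sum_q v_q q\in CM_{k+1}(f;\K)$ and $w=\sum_p w_p p\in CM_k(f;\K)$, define
\[ B_+ = \sum_p (d_f v)_p\, W^u(p), \qquad B_- = \sum_q (\delta_f w)_q\, W^s(q), \]
where $\delta_f$ is the coboundary dual to $d_f$. The standard pseudochain boundary formulas $\partial W^u(q)=\sum_p\langle d_f q,p\rangle W^u(p)$ and $\partial W^s(p)=\sum_q\langle d_f q,p\rangle W^s(q)$ identify $B_+$ as a pseudoboundary of dimension $k$ (bounded by $\sum_q v_q W^u(q)$) and $B_-$ as one of dimension $n-k-1$. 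Since gradient trajectories strictly decrease index, $\overline{B_+}$ only involves unstable manifolds of critical points of index $\le k$ and $\overline{B_-}$ only stable manifolds of index $\ge k+1$; the Morse-Smale dimension formula then forces $\overline{B_+}\cap\overline{B_-}=\varnothing$. A transverse intersection count using $W^u(q)\cdot W^s(q')=\delta_{q,q'}$ for same-index critical points gives
\[ lk_\K(B_-,B_+) = \langle d_f v, w\rangle, \]
so (i) --- the nontriviality of $d_{f,k+1}$, equivalently the existence of $v,w$ with $\langle d_f v,w\rangle\ne 0$ --- yields (ii).

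For the optimization in the moreover clause, $\max f|_{\overline{B_+}}=\max\{f(p):(d_f v)_p\ne 0\}$ and $\min f|_{\overline{B_-}}=\min\{f(q):(\delta_f w)_q\ne 0\}$. Requiring these to be separated by a level $c'$ amounts to asking $v$ to be a cycle in the quotient complex above level $c'$ and $w$ to be a cocycle in the subcomplex below $c'$ (for the $f$-value filtration). The supremum of such $c'$ over all $v,w$ with $\langle d_f v,w\rangle\ne 0$ is a filtration-theoretic invariant of the $f$-filtered Morse complex, and the optimal pair produces $B_\pm$ realizing the largest possible gap $\min f|_{\overline{B_-}}-\max f|_{\overline{B_+}}$.

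For (ii)$\Rightarrow$(i), one associates to an arbitrary pseudochain $b$ an algebraic counterpart in the Morse complex: $b_+\mapsto \phi(b_+)\in CM_k(f;\K)$ with $p$-coefficient $b_+\cdot W^s(p)$, and dually $b_-\mapsto \psi(b_-)\in CM_{k+1}(f;\K)^{\vee}$ with $q$-coefficient $b_-\cdot W^u(q)$. The $f$-value hypothesis places $\phi(b_+)$ in $CM_k^{<c}$ and $\psi(b_-)$ in the dual of $CM_{k+1}^{>c}$ for any $c$ strictly between $\max f|_{\overline{b_+}}$ and $\min f|_{\overline{b_-}}$, and the pseudoboundary condition makes $\phi(b_+)$ into a $d_f$-image inside $CM^{<c}$ (via a bounding chain of $b_+$) and likewise for $\psi(b_-)$. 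The central identity to be established is that a natural algebraic linking pairing built from $\phi(b_+)$ and $\psi(b_-)$ in the filtered complex equals $lk_\K(b_-,b_+)$; this pairing factors through the cross-$c$ block of $d_{f,k+1}$, so its nonvanishing forces $d_{f,k+1}\ne 0$. The main obstacle is precisely this identity: defining $\phi,\psi$ coherently on general pseudochains (not only (un)stable manifolds), proving chain-level compatibility with $d_f$, $\delta_f$, and the pseudochain boundary, and controlling signs in all intersection counts. This is the substance of the paper's chain-level Floer-inspired machinery; once it is in place, both directions and the optimality claim reduce to filtered linear algebra.
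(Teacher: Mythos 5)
Your high-level architecture matches the paper's: construct pseudoboundaries from (compactified) unstable/stable manifolds for (i)$\Rightarrow$(ii), and pass from geometric pseudoboundaries to Morse chains via intersection with stable/unstable manifolds for (ii)$\Rightarrow$(i). But both directions contain a genuine gap that is precisely where the work of the paper lives, and the gaps are not merely ``sign bookkeeping.''

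For (i)$\Rightarrow$(ii): the formal sum $B_+=\sum_p (d_fv)_p W^u(p)$ (with $|p|_f=k$) is \emph{not} a pseudoboundary as defined. A $k$-pseudocycle must have $\Omega$-limit set of dimension $\le k-2$, but $\Omega_{W^u(p)}$ contains unstable manifolds $W^u(s)$ with $|s|_f=k-1$, which have dimension $k-1$. The cancellation from $d_f^2=0$ is a cancellation of \emph{coefficients}, not of \emph{sets}: taking a disjoint union of signed copies of $W^u(p)$ does nothing to shrink the $\Omega$-limit set. The paper's Lemma~\ref{chainconstruct} resolves this by working inside the broken-flow-line compactification $\widehat{Y}$ (which requires the local triviality hypothesis from \cite{BH} to be a smooth manifold with corners with smooth evaluation map), gluing in pairs the codimension-one faces $\{\gamma\}\times W^u(q)$ whose contributions cancel (Lemma~\ref{fuse}), and then resolving the remaining corners to a manifold with boundary (Lemma~\ref{resolve}). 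Only after this surgery does the boundary become an honest pseudoboundary, and only then does the intersection-count argument that $lk(b_-,b_+)=\Lambda(d_{-f}a_-,d_fa_+)$ (Proposition~\ref{twoboundaries}(ii)) go through. Your ``standard pseudochain boundary formulas'' elide all of this.

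For (ii)$\Rightarrow$(i): you locate the obstacle in defining $\phi,\psi$ coherently and getting chain-level compatibility with $d_f$, but that is Proposition~\ref{igprop}(ii) ($I_{\partial g}-d_fI_g+(-1)^{n-v}I_gd_f=0$), which is the \emph{easy} part. The substantive obstacle is that the naive computation $\Lambda(I_{b_-}M_{-f},I_{b_+}M_f)=\pm\Pi(M_{-f},I_{b_-}I_{c_+}M_f)$ does not equal $\pm\Pi(M_{-f},I_{c_+\times_M b_-}M_f)=lk_{\K}(b_-,b_+)$, because $I_{b_-}I_{c_+}\ne I_{c_+\times_Mb_-}$ at chain level. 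The discrepancy is controlled by the two-marked-point operator $I_{g_0,g_1}$ via the identity in Proposition~\ref{fundid}, whose proof requires analyzing the compactification $\overline{\tilde{\mathcal{M}}(p,q;f)\times(0,\infty)}$ (Lemma~\ref{mark2}) with its six boundary strata $C_1,\dots,C_6$. The resulting correction term $\Pi(M_{-f},I_{b_+,b_-}M_f)$ in Proposition~\ref{linklinkprop} vanishes under the hypothesis $\min f|_{\overline{Im(b_-)}}>\max f|_{\overline{Im(b_+)}}$ (Remark~\ref{nomassey}), which is why it is absent from the statement of Theorem~\ref{main1} --- but one cannot prove the linking formula without first introducing it. Your proposal does not identify this mechanism.
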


\begin{proof} The equivalence of (i) and (ii) follows from Theorem \ref{alggeom} and the first sentence of Proposition \ref{alg-betaprop}, since in the notation of Theorem \ref{alggeom} the statement (i) is equivalent to the statement that $\beta^{alg}_{k}(f;\K)> 0$, while statement (ii) is equivalent to the statement that $\beta^{geom}_{k}(f;\K)> 0$.
The final statement of the theorem follows from the constructions in Section \ref{crittolink} which are used to prove the implication `(i)$\Rightarrow$(ii).'
\end{proof}

We leave to Section \ref{pseudolink} the precise definitions related to linking numbers of pseudoboundaries.  Suffice it to note for the moment that a special case of a pair of pseudoboundaries $b_{\pm}\co B_{\pm}\to M$ is given by setting $B_{\pm}=\partial C_{\pm}$ for some compact manifolds with boundary $C_{\pm}$, and setting $b_{\pm}=c_{\pm}|_{C_{\pm}}$ for some pair of smooth maps $c_{\pm}\co C_{\pm}\to\R$.  Assuming that $b_+$ and $b_-$ have disjoint images, the $\mathbb{Z}$-valued linking number $lk(b_-,b_+)$ is obtained by perturbing $c_+$ to make it transverse to $b_-$ and then taking the intersection number of $b_-$ and $c_+$ (which, one can show, depends only on $b_{\pm}$ and not on $c_{\pm}$), and the $\K$-valued linking number $lk_{\K}(b_-,b_+)$ is just the image of $lk(b_-,b_+)$ under the unique unital ring morphism $\Z\to \K$.  The more general setup of pseudoboundaries generalizes this only in that the domains and images of $b_{\pm}$ and $c_{\pm}$ are allowed some mild noncompactness (the images should be precompact, and ``compact up to codimension two'' in a standard sense that is recalled in Section \ref{pseudolink}).  If we were to instead require the domains of $b_{\pm}$ to be compact, then of course the implication `(ii)$\Rightarrow$(i)' in Theorem \ref{main1} would follow \emph{a fortiori}, while `(i)$\Rightarrow$(ii)' would hold provided that $\K$ has characteristic zero by Remark \ref{betasmooth}.

The implication (i)$\Rightarrow$(ii) in Theorem \ref{main1} is perhaps surprisingly strong in that it yields not just a smoothly nontrivial link in $M$ but a link with nontrivial linking number.  For example, letting $M=S^3$, if $L=L_0\cup L_1$ is any link with $\dim L_0=\dim L_1=1$ such that every minimal-genus Seifert surface for $L_0$ intersects $L_1$ (there are many examples of links with this property that moreover have zero linking number, beginning with the Whitehead link), then standard minimax arguments along the lines of those used in \cite{R} and \cite{C} to prove the Saddle Point Theorem and its variants show that if $f\co S^3\to \R$ is any Morse function such that $\min(f|_{L_0})>\max(f|_{L_1})$ then $f$ must have a critical point which is not a global extremum, and so in view of the homology of $S^3$ the Morse boundary operator $d_f\co CM_{*}(S^3;\Z)\to CM_{*-1}(S^3;\K)$ must be nonzero in some degree.  So Theorem \ref{main1} then gives a link $L'_0\cup L'_1$ in $S^3$ such that $\min(f|_{L'_0})>\min(f|_{L'_1})$ and whose components $L'_0$ and $L'_1$ are homologically trivial and have nonzero linking number, even if the original link had zero linking number. (It is not immediately clear whether the $L'_i$ must be one-dimensional; conceivably one of them could be zero-dimensional and the other two-dimensional.)

\begin{remark}The orientability hypothesis on $M$ in Theorem \ref{main1} and in Theorem \ref{main2} below may be dropped if one restricts to rings $\K$ having characteristic two and modifies the definition of a pseudoboundary (see Definition \ref{pseudodef}) so that the domains of a pseudoboundary and of its bounding pseudochain need not be orientable.   This can be seen by direct inspection of the proofs of the theorems if one simply ignores all references to orientations therein.\end{remark}
 
Going beyond Theorem \ref{main1}, for any field $\K$ one can characterize the precise values of the coefficients $q_k(f;\K)$, not just whether or not they are zero, in terms of the linking of pseudoboundaries, though this requires a somewhat more complicated description and indeed requires some knowledge of the gradient flow of the function $f$ with respect to a suitable metric.  If $b_+\co B_+\to M$ and $b_-\co B_-\to M$ are pseudoboundaries of dimensions $k$ and $n-k-1$ respectively, from the general theory in Section \ref{ops} we obtain a quantity denoted there by $\Pi(M_{-f},I_{b_+,b_-}M_f)$.  This quantity may be intuitively described as a signed count of those trajectories $\gamma\co [0,T]\to M$ of the vector field $-\nabla f$ such that $\gamma(0)\in Im(b_+)$ and $\gamma(T)\in Im(b_-)$, where $T$ is a positive number (which is allowed to vary from trajectory to trajectory).  The quantity $\Pi(M_{-f},I_{b_+,b_-}M_f)$ should in general be expected to depend on the Riemannian metric used to define the gradient flow; however there is one case where it is obviously independent of the metric and also is easily computable: since the function $f$ decreases along its gradient flowlines, if one has $\sup(f|_{Im(b_+)})<\inf(f|_{Im(b_-)})$ then clearly 
$\Pi(M_{-f},I_{b_+,b_-}M_f)=0$ (indeed this is the reason that $\Pi(M_{-f},I_{b_+,b_-}M_f)$ did not appear in the statement of Theorem \ref{main1}).  We will see that in general $\Pi(M_{-f},I_{b_+,b_-}M_f)$ serves as a sort of correction term in the relationship between geometric linking of pseudoboundaries in $M$ and the algebraic linking pairing on the Morse complex of $f$ defined in (\ref{mlpair}).

\begin{theorem}\label{main2}  Let $f\co M\to \R$ be a Morse function on a compact oriented $n$-dimensional manifold $M$, and let $\K$ be a field.  There are Riemannian metrics on $M$ such that the following are equivalent for all nonnegative integers $k$ and $m$:
\begin{itemize}\item[(i)] The rank of the Morse boundary operator $d_{f,k+1}\co CM_{k+1}(f;\K)\to CM_{k}(f;\K)$ is at least $m$.
\item[(ii)] There are $k$-dimensional pseudoboundaries $b_{1,+},\ldots,b_{r,+}$ and $(n-k-1)$-dimensional pseudoboundaries $b_{1,-},\ldots,b_{s,-}$ such that the matrix $L$ whose entries are given by \[ L_{ij}=lk(b_{j,-},b_{i,+})-(-1)^{(n-k)(k+1)}\Pi(M_{-f},I_{b_{i,+},b_{j,-}}M_f) \] has rank at least $m$.
\end{itemize}
\end{theorem}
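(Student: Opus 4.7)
The plan is to generalize the $\beta^{alg}=\beta^{geom}$ strategy that proves Theorem \ref{main1} from detecting nontriviality to detecting rank. Specifically, I would introduce quantities $r^{alg}_k(f;\K)$ and $r^{geom}_k(f;\K)$ paralleling the $\beta$'s of Theorem \ref{alggeom}: let $r^{geom}_k(f;\K)$ be the supremum over all collections of pseudoboundaries $\{b_{i,+}\}$, $\{b_{j,-}\}$ as in (ii) of $\rk L$, and let $r^{alg}_k(f;\K)$ be the rank of the algebraic linking pairing (\ref{mlpair}) restricted to the space of boundaries in $CM_k(f;\K)$ paired with the space of boundaries in $CM_{n-k-1}(-f;\K)$. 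The theorem will follow once I show $r^{alg}_k = r^{geom}_k = \rk d_{f,k+1}$.

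First, I would verify the purely algebraic fact that the pairing (\ref{mlpair}), when restricted to the boundary spaces $B_k(f;\K) \times B_{n-k-1}(-f;\K)$ in complementary degrees, is perfect: if $b_+ = d_f x$ and $b_- = d_{-f} y$, setting $\lambda^{alg}(b_+,b_-) = \langle x, b_- \rangle$ (under the Poincar\'e duality identification $CM_*(f;\K) \cong CM_{n-*}(-f;\K)^{\vee}$) is independent of the choices of primitives because cycles pair trivially with boundaries, and nondegeneracy follows because $\langle dx, y\rangle = 0$ for all $y$ forces $dx = 0$. Hence $r^{alg}_k(f;\K) = \dim B_k(f;\K) = \rk d_{f,k+1}$.

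For the implication (i)$\Rightarrow$(ii), I would choose a Riemannian metric that is Morse--Smale and sufficiently generic for the constructions of Section \ref{ops} and Section \ref{pseudolink} to apply, and then use the unstable/stable manifold construction from Section \ref{crittolink} to produce, from bases $\{x_i\}$, $\{y_j\}$ of primitives spanning the Morse boundary spaces in degrees $k+1$ and $n-k$, pseudoboundaries $b_{i,+}, b_{j,-}$ in $M$ that represent $d_f x_i$ and $d_{-f} y_j$ respectively. The chain-level framework of Section \ref{ops} applied to the operator $I_{b_{i,+},b_{j,-}}$, together with the definition of $\Pi$, should give the key correction identity
\[ lk(b_{j,-}, b_{i,+}) - (-1)^{(n-k)(k+1)} \Pi(M_{-f}, I_{b_{i,+},b_{j,-}} M_f) = \lambda^{alg}(d_f x_i, d_{-f} y_j), \]
so that choosing $\{x_i\}$ and $\{y_j\}$ to realize the maximum rank of the algebraic pairing produces an $L$ of rank $\rk d_{f,k+1}$. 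The reverse implication (ii)$\Rightarrow$(i) is then immediate: for any pseudoboundaries $b_{i,+}, b_{j,-}$, the same identity (applied now to the Morse-complex boundaries $\tilde b_{i,+}, \tilde b_{j,-}$ that Section \ref{ops} associates to them) exhibits $L$ as a matrix of values of $\lambda^{alg}$, so $\rk L \le r^{alg}_k = \rk d_{f,k+1}$.

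The principal obstacle is establishing the displayed correction identity with correct signs and in sufficient generality. The $\Pi$ term is precisely what is required to convert a geometric linking number -- which sees contributions from gradient trajectories connecting $Im(b_+)$ to $Im(b_-)$ regardless of the relative heights of $f$ on the two -- into the purely algebraic pairing on the Morse complex; extracting this from the chain-level operations of Section \ref{ops} is the technical heart of the argument. A secondary subtlety, reflected in the theorem's existential phrasing about the metric, is that Morse--Smale alone suffices to define the Morse complex, but the operations $I_{b_+,b_-}$ and the pseudocycle formalism impose further transversality conditions between the gradient flow and the (perturbed) pseudoboundaries; a generic perturbation of any Morse--Smale metric will satisfy these, which is all the theorem asserts.
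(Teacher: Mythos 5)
Your overall strategy is the right one and mirrors the paper's: bound $\rk L$ by the rank of the algebraic linking pairing $\Lambda$ on boundary spaces to get (ii)$\Rightarrow$(i), and realize that rank by constructing pseudoboundaries from compactified unstable/stable manifolds for (i)$\Rightarrow$(ii). The algebraic step that $\Lambda$ restricts to a perfect pairing on $B_k(f;\K)\times B_{n-k-1}(-f;\K)$ is correct, and your (ii)$\Rightarrow$(i) argument is essentially Corollary \ref{linkcor1}.

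The gap is in (i)$\Rightarrow$(ii), specifically in your displayed ``correction identity.'' What Proposition \ref{linklinkprop} actually gives is
\[
lk_{\K}(b_{j,-},b_{i,+})-(-1)^{(n-k)(k+1)}\Pi(M_{-f},I_{b_{i,+},b_{j,-}}M_f)=\Lambda\bigl(I_{b_{j,-}}M_{-f},\,I_{b_{i,+}}M_f\bigr),
\]
with the right-hand side involving the chain-level elements $I_{b_{j,-}}M_{-f}$ and $I_{b_{i,+}}M_f$ --- \emph{not} the prescribed boundaries $d_{-f}y_j$ and $d_f x_i$. Nothing in the construction forces $I_{b_{i,+}}M_f=d_f x_i$, so the identity you wrote does not follow ``from the chain-level framework together with the definition of $\Pi$.'' Indeed, if the $\Pi$ term is nonzero your identity is simply false: the paper's Proposition \ref{twoboundaries}(ii) shows directly that the constructed pseudoboundaries satisfy $lk(b_{j,-},b_{i,+})=\Lambda(d_{-f}y_j,d_f x_i)$, and then $L_{ij}=\Lambda(d_{-f}y_j,d_f x_i)-(-1)^{(n-k)(k+1)}\Pi(M_{-f},I_{b_{i,+},b_{j,-}}M_f)$, which differs from $\lambda^{alg}(d_fx_i,d_{-f}y_j)$ by the $\Pi$ term. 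The missing ingredient is Proposition \ref{twoboundaries}(iii): after composing the constructed pseudoboundaries with a generic $C^1$-small diffeomorphism, $I_{b_{i,+},b_{j,-}}=0$, which uses that $\overline{Im(b_+)}$ lies in unstable manifolds of index $\le k$ and $\overline{Im(b_-)}$ in stable manifolds of index $\ge k+1$, so a broken trajectory from the former to the latter would violate Morse--Smale. Without this your $L$ might have rank less than $m$.

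Two smaller points. First, the metric: the unstable-manifold compactification of Lemma \ref{chainconstruct} requires a metric that is locally trivial near critical points (so that the Burghelea--Haller results apply). This is not a generic condition, so one cannot simply ``perturb the metric'' to make the constructions of Sections \ref{ops} and \ref{pseudolink} apply; the paper instead fixes a locally trivial Morse--Smale metric and perturbs the pseudoboundaries by a diffeomorphism. Second, the constructions of Section \ref{crittolink} naturally produce pseudoboundaries from integral Morse chains, so when $\K$ is an arbitrary field one has to reduce to $\K=\mathbb{Q}$ or $\K=\Z/p\Z$ and lift the dual basis $\{x_{i,+}\},\{x_{j,-}\}$ to integral chains (clearing denominators by an integer $N$ in the rational case) before applying Proposition \ref{twoboundaries}; this step is omitted from your outline.
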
  

\begin{proof} See Corollaries \ref{linkcor1} and \ref{cormain2}.
\end{proof}

In fact, as noted in Corollary \ref{cormain2}, if (ii) (or equivalently (i)) holds, then the pseudocycles $b_{i,+},b_{j,-}$ can be chosen to obey $\Pi(M_{-f},I_{b_{i,+},b_{j,-}}M_f)=0$.   

  
To rephrase Theorem \ref{main2}, for each $k$ the rank $q_k(f;\K)$ of the Morse boundary operator \linebreak $d_{f,k+1}\co CM_{k+1}(f;\K)\to CM_k(f;\K)$ can be expressed as the largest possible rank of a matrix whose entries are given by the $\K$-valued linking numbers of each member of a collection of $k$-dimensional pseudoboundaries with each member of a collection of $(n-k-1)$-dimensional pseudoboundaries, corrected by a term  arising from ``negative gradient flow chords'' from the former to the latter.  Moreover, there are collections of pseudoboundaries for which the maximal possible rank is attained and the correction term vanishes.  

By Poincar\'e duality, the Betti numbers $\frak{b}_k(M;\K)$ can somewhat similarly be described as the maximal rank of a certain kind of matrix: namely, a matrix whose entries are given by the $\K$-valued intersection numbers of each member of a collection of $k$-dimensional pseudocycles with each member of a collection of $(n-k)$-dimensional pseudocycles.   Thus a general Morse function $f$ on an oriented compact manifold  $M$ has $\left(\sum_{k}\frak{b}_k(M;\K)\right)$-many critical points which can be seen as resulting purely from the homology of $M$ and may be associated to the intersection of cycles in $M$, and also exactly $2\left(\sum_{k}q_k(f;\K)\right)$-many other critical points, and these other critical points are not accounted for by the homology of $M$ but may be associated to the behavior of $f$ with respect to linked, homologically trivial cycles in $M$.

\subsection{Outline of the paper and additional remarks}

The body of the paper begins with the following Section \ref{or}, which sets up some notation and conventions relating to orientations and Morse theory and works out some signs that are useful later; readers, especially those content to ignore sign issues and work mod 2, may prefer to skip this section on first reading and refer back to it as necessary.

Section \ref{pseudolink} introduces the formalism of pseudochains and pseudoboundaries that is used throughout the paper; these are natural modifications of the pseudocycles considered in \cite[Section 6.5]{MS}.  In particular we show that a pair of pseudoboundaries the sum of whose dimensions is one less than the dimension of the ambient manifold, and the closures of whose images are disjoint, has a well-defined linking number, about which we prove various properties.  We also prove Lemma \ref{smoothen}, which for some purposes allows one to work with homologically trivial maps of compact smooth manifolds into $M$ in place of pseudoboundaries; however if one wishes to work over $\Z$ or $\Z/p\Z$ rather than $\mathbb{Q}$ then restricting to  maps of compact smooth manifolds  will lead one to miss some topological information, consistently with results that date back to \cite[Th\'eor\`eme III.9]{T}.

Section \ref{ops} recalls the Morse complex $CM_{*}(f;\K)$ of a Morse function $f$ and introduces several operations on it.  Among these are rather standard ones corresponding after passing to homology to Poincar\'e duality and to the cap product. Importantly, these operations can be defined on chain level, and consideration of their chain-level definitions suggests some other operations that capture different information.  In particular the chain level Poincar\'e pairing can easily be modified to obtain a Morse-theoretic linking pairing, whose relation to the linking of pseudoboundaries is fundamental for this paper.  As for the cap product, it is described on chain level by considering negative gradient trajectories which pass through a given pseudochain, and this chain level operation has natural generalizations obtained from negative gradient trajectories which instead pass through several prescribed pseudochains at different times.  These more general operations (denoted $I_{g_0,\cdots,g_{k-1}}$) are not chain maps, so they do not pass to homology and, at least in and of themselves, do not encode topologically invariant information (though suitable combinations of them should give rise to Massey products).  While from some perspectives this lack of topological invariance would be seen as a defect, our focus in this paper is on the ``extra'' critical points that a given Morse function $f$ may or may not have, and these extra critical points are also not topologically invariant in that their existence typically depends on $f$ (and throughout the paper we are viewing the function $f$, not just the manifold on which it is defined, as the basic object of study).

While the $I_{g_{0},\cdots, g_{k-1}}$ are not chain maps for $k\geq 2$, they do satisfy some important identities which are obtained by examining boundaries of certain one-dimensional moduli spaces of gradient trajectories and are described in general in Remark \ref{ainfty}.  The only ones of these that are used for the main results of this paper are Propositions \ref{igprop}(ii) and \ref{fundid} (which concern the cases $k=1,2$), though it would be interesting to know if the identities for $k>2$ can be used to provide a relationship between Morse theory and Milnor's higher-order linking numbers.
We would like to take this opportunity to mention a broader perspective on these identities.  Given a finite set of pseudochains $g_i\co C_i\to M$ which are in suitably general position with respect to each other, a construction in the spirit of \cite[Section 3.4]{FOOO} should give rise to an $A_{\infty}$-algebra $\mathcal{C}(M)$ of pseudochains in $M$ with each $g_i\in \mathcal{C}(M)$, whose operations $\frak{m}_l$, when applied to tuples of distinct $g_i$ from the given collection, obey \begin{align*}  \frak{m}_1(g_i)&=\pm g_i|_{\partial C_i} \\ \frak{m}_2(g_{i_1},g_{i_2})&=\pm g_{i_{1}}\times_M g_{i_2}\,\,\mbox{ if }i_1\neq i_2 \\ \frak{m}_l(g_{i_1},\ldots,g_{i_l})&=0 \,\,\mbox{ if }l\geq 3\mbox{ and }i_1,\ldots,i_l\mbox{ are distinct}.\end{align*}  In this case Remark \ref{ainfty} would be a special case of the statement that the Morse complex $CM_{*}(f;\K)$ is an $A_{\infty}$-module over the $A_{\infty}$-algebra $\mathcal{C}(M)$, with part of the module action given up to sign by the operators $I_{g_{i_0}\ldots g_{i_{k-1}}}$.\footnote{To be clear, the existence of this $A_{\infty}$-module structure is not proven either in this paper or, as far as I know, anywhere else in the literature; this paper does however contain detailed proofs of the only consequences of the conjectural $A_{\infty}$-module structure that we require in the proofs of our main theorems, namely Propositions \ref{igprop}(ii) and \ref{fundid}.} This is reminiscent of, though distinct from, the discussion in \cite[Chapter 1]{F}, in which Fukaya organizes the Morse complexes associated to all of the various Morse functions on the manifold $M$ into an $A_{\infty}$-category; by contrast, we work with a single fixed Morse function $f$ on $M$, and the relevant $A_{\infty}$ structure on $CM_{*}(f;\K)$ arises not from other Morse functions but rather from the interaction of $f$ with an $A_{\infty}$-algebra of (pseudo)chains in $M$.  One could perhaps enlarge Fukaya's picture to incorporate ours by regarding $(\mathcal{C}(M),\frak{m}_1)$ as playing the role of the ``Morse complex'' of the (non-Morse) function $0$ on $M$. With this said, we will just prove those aspects of the $A_{\infty}$ structures that we require in a direct fashion, so the phrase ``$A_{\infty}$'' will not appear again in the paper. 

Section \ref{linkcrit} begins the process of establishing a relationship between the linking of pseudoboundaries described in Section \ref{pseudolink} with the operations on the Morse complex described in Section \ref{ops}; in particular the implications ``(ii)$\Rightarrow$(i)'' in Theorems \ref{main1} and \ref{main2} are established in Section \ref{linkcrit}.  The key ingredient in this regard is Proposition \ref{linklinkprop}, which uses Propositions \ref{igprop} and \ref{fundid} to associate to a pair $b_0,b_1$ of linked pseudoboundaries in $M$ a pair of boundaries in the Morse complexes of $\pm f$, whose Morse-theoretic linking pairing is determined by the linking number of the pseudoboundaries together with the correction term $\Pi(M_{-f},I_{b_0,b_1}M_f)$ alluded to earlier. If the pseudoboundary $b_0$ has dimension $k$, then its associated boundary in the Morse complex is obtained as a linear combination of those index-$k$ critical points which arise as the limit in forward time of a negative gradient flowline of $f$ which passes through the image of $b_0$ (generically there will be only finitely many such flowlines).  This is vaguely similar to the usual strategy of obtaining critical points the Saddle Point Theorem as in \cite{R}, wherein one essentially ``pushes down'' $b_0$ via the negative gradient flow until one encounters a critical point.  However, if one does follows Rabinowitz's approach naively then one should not even expect to locate a critical point of index $k$, since the critical points that one first encounters would be likely to have higher index. Although there exist ways of guaranteeing that one finds an index-$k$ critical point by a similar procedure (essentially by first replacing $b_0$ with a certain other chain which is homologous to it in an appropriate relative homology group, see \cite[Section II.1]{C}), these older methods still do not seem to suffice to yield the quantitative estimates on $q_k(f;\K)$ in Theorem \ref{main2}, or indeed the nonvanishing of $q_k(f;\K)$ in Theorem \ref{main1} if the ambient manifold has nonzero $k$th Betti number.  However, by taking the approach---familiar from Floer theory---of using not the entire gradient flow of $f$ but rather only certain zero-dimensional spaces of gradient trajectories, and by exploiting more fully the algebraic structures on the Morse complex, we are able to obtain these quantitative results.

 In Section \ref{linkcrit} we also formulate and begin to prove Theorem \ref{alggeom}, which can be seen as a more refined version of Theorem \ref{main1}.  Theorem \ref{alggeom} equates two quantities associated to a Morse function $f\co M\to\R$ on a compact oriented manifold and a ring $\K$: the \emph{geometric link separation} $\beta^{geom}_{k}(f;\K)$ and the \emph{algebraic link separation} $\beta^{alg}_{k}(f;\K)$.  The geometric link separation describes the maximal amount by which the function $f$ separates any pair of pseudoboundaries of appropriate dimensions whose linking number is nontrivial; thus Theorem \ref{main1} asserts that this quantity is positive if and only if the Morse boundary operator is nontrivial in the appropriate degree.  The algebraic link separation in general has a more complicated definition which we defer to Section \ref{linkcrit}, but when $\K$ is a field we show in Proposition \ref{alg-betaprop} that $\beta^{alg}_{k}(f;\K)$ is equal to a quantity introduced in the Floer-theoretic context in \cite{U11} called the \emph{boundary depth} of $f$: the Morse complex $CM_{*}(f;\K)$ has a natural filtration given by the critical values, and the boundary depth is the minimal quantity $\beta$ such that any chain $x$ in the image of the boundary operator must be the boundary of a chain $y$ whose filtration level is at most $\beta$ higher than that of $x$.  This paper had its origins in an attempt to obtain a more geometric interpretation of the boundary depth in the Morse-theoretic context---which in particular reflects the fact that the boundary depth depends continuously on the function $f$ with respect to the $C^0$-norm---and when $\K$ is a field that goal is achieved by Theorem \ref{alggeom}.

The implications ``(i)$\Rightarrow$(ii)'' in Theorems \ref{main1} and \ref{main2} are proven in Section \ref{crittolink}.   Our approach is to associate to any element $a$ in the Morse complex $CM_{k+1}(f;\Z)$ a pseudochain which represents $a$ in a suitable sense; see Lemma \ref{chainconstruct}.  In the case that the Morse differential of $a$ is trivial, such a construction already appears in \cite{S99}, where it is used to construct an equivalence between Morse homology and singular homology.  Our interest lies in the case that the Morse differential $d_{f,k+1}a$ of $a$ is nontrivial, and then the boundary of the pseudochain will be a pseudoboundary whose properties with respect to linking numbers and with respect to the function $f$ are patterned after corresponding properties of $d_{f,k+1}a$ in the Morse complex; for this purpose a somewhat more refined construction than that in \cite{S99} is required.  Our construction makes use of properties of the manifold-with-corners structure of the compactified unstable manifolds of $f$ with respect to metrics obeying a local triviality condition near the critical points, as established in \cite{BH}.  (The existence of such a structure has been proven for more general metrics in \cite{Q}; however we also require the evaluation map from the compactified unstable manifold into $M$ to be smooth, a property which currently seems to be known only in the locally trivial case.)

Finally, the closing Section \ref{app} contains proofs of three technical results deferred from Sections \ref{pseudolink} and \ref{ops}, two of which concern issues of transversality and the other of which works out in detail (with careful attention paid to orientations) the boundary of the moduli space which gives rise to the key identity in Proposition \ref{fundid}.

\subsection*{Acknowledgements}  The original inspiration for this project came partially from conversations with Leonid Polterovich during a visit to the University of Chicago in March 2011.  This work was supported by NSF grant DMS-1105700.

\section{Conventions for orientations and Morse theory} \label{or}

The most detailed and coherent treatment of the orientation issues that one encounters in dealing simultaneously with intersection theory and the Morse
complex that I have found is \cite[Appendix A]{BC}, so I will borrow most of my orientation conventions from there.

\subsection{Short exact sequences.}\label{or:ses}  Many of the vector spaces that one needs to orient in a discussion such as this are related to each other by short exact sequences, and so one should first decide on an orientation convention for short exact sequences; following \cite{BC}, our convention is that, given a short exact sequence of vector spaces \[ \xymatrix{ 0\ar[r] & A\ar[r]^{f} & B\ar[r]^{g}& C\ar[r]& 0 }\] in which two of $A,B,C$ are oriented, the other should be oriented in such a way that, if $\{a_1,\ldots,a_p\}$ and $\{c_1,\ldots,c_q\}$ are oriented bases for $A$ and $C$ respectively and if $b_i\in B$ are chosen so that $g(b_i)=c_i$, then \[ \{b_1,\ldots,b_q,f(a_1),\ldots,f(a_p)\} \] is an oriented basis for $B$.  

We orient a vector space given as a direct sum $V\oplus W$ of oriented vector spaces by using an oriented basis for $V$ followed by an oriented basis for $W$; in terms of our short exact sequence convention this amounts to orienting $V\oplus W$ by using the short exact sequence \[ \xymatrix{ 0\ar[r] & W \ar[r]& V\oplus W\ar[r]& V\ar[r]& 0 }\]  A product $M\times N$ of oriented manifolds is then oriented by 
means of the direct sum decomposition $T_{(m,n)}=T_mM\oplus T_n N$.

\subsection{Group actions.}\label{or:actions} If $G$ is an oriented Lie group with Lie algebra $\mathfrak{g}$ (for us $G$ will always be $\mathbb{R}$) acting freely on an oriented manifold $M$, the quotient $M/G$ is oriented according to the exact sequence on tangent spaces given by the action: \[ \xymatrix{ 0\ar[r]&\mathfrak{g}\ar[r]& T_mM\ar[r] & T_{[m]}(M/G)\ar[r]& 0 }\]

\subsection{Boundaries.}\label{or:bound}  For boundaries of oriented manifolds we use the standard ``outer-normal-first'' convention.

\subsection{Fiber products.}\label{or:fp} Many of the important spaces that we need to orient can be seen as \emph{fiber products}: if $f\co V\to M$ and $g\co W\to M$ are smooth maps, the fiber product $V {}_{f}\times_g W$ is given by \[ V {}_{f}\times_g W=\{(v,w)\in V\times W|f(v)=g(w)\}.\]  In other words, where $\Delta\subset M$ is the diagonal, we have $V {}_{f}\times_g W=(f\times g)^{-1}(\Delta)$.  So if $f\times g$ is transverse to $\Delta$ (in which case we say that ``the fiber product is cut out transversely'') then $V_f\times_g W$ will be a manifold of dimension $\dim V+\dim W-\dim M$.  The tangent space to $V_f\times_g W$ at $(v,w)$ may be canonically identified (under the projection $T_vV\oplus T_{f(v)}M\oplus T_wW\to T_vV\oplus T_wW$) with the kernel of the map $h\co T_vV\oplus T_{f(v)}M\oplus T_wW\to T_{f(v)}M\oplus T_{f(v)} M$ defined by $h(e_V,e_M,e_W)=(f_*e_V-e_M,e_M-g_*e_W)$.  Under this identification, if orientations on $V,W,M$ are given, then $V {}_f\times_g W$ is oriented at $(v,w)$ by means of the short exact sequence \[ \xymatrix{0\ar[r] & \ker h\ar[r] & T_vV\oplus T_{f(v)}M\oplus T_{w}W\ar[r]^{h}& T_{f(v)}M\oplus T_{f(v)}M\ar[r]&0 }\]  

As noted in \cite{BC}, this fiber product orientation convention results in a number of pleasant properties.  First, a Cartesian product  $V\times W$ can be viewed as a fiber product $V{}_{*}\times_{*}W$ by taking the target space $M$ to be a positively-oriented point, and the resulting fiber product orientation on $V\times W$ coincides with the standard orientation.  Also, for a smooth map $f\co V\to M$ the fiber products $V{}_{f}\times_{1_M}M$ and $M{}_{1_M}\times_{f}V$ are identified with $V$ by projection, and the orientations on $V{}_{f}\times_{1_M}M$ and $M{}_{1_M}\times_{f}V$ are consistent with this orientation.  Less trivially, given suitably transverse maps $f\co U\to X$, $g_1\co V\to X$, $g_2\co V\to Y$, $h\co W\to Y$, one has: \begin{equation}\label{associd} (U {}_{f}\times_{g_1} V)_{g_2}\times_h W=U {}_f\times {}_{g_1}(V_{g_2}\times_{h}W) \end{equation} as oriented manifolds.  Also, if $V$ and $W$ are manifolds with boundary and $f\co V\to M$, $g\co W\to M$ are smooth maps such that $f(\partial V)\cap g(\partial W)=\varnothing$, and if all fiber products below are cut out transversely, one has, as oriented manifolds, \begin{equation} \label{bdryid} \partial(V {}_f\times_g W)=\left((\partial V) {}_f\times_g W\right)\coprod (-1)^{\dim M-\dim V}\left(V {}_f\times_g\partial W\right).\end{equation}

Moreover, again assuming $f\times g\co V\times W\to M\times M$ to be transverse to $\Delta$, the obvious diffeomorphism $(v,w)\mapsto (w,v)$ from $V\times W$ to $W\times V$ restricts as a diffeomorphism of oriented manifolds as follows: \begin{equation} \label{commutid} V{}_{f}\times_g W\cong (-1)^{(\dim M-\dim V)(\dim M-\dim W)}W{}_g\times_f V.\end{equation}

(The proofs of (\ref{associd}), (\ref{bdryid}), and (\ref{commutid}) can all be read off from \cite[A.1.8]{BC} and references therein.)

If $\delta\co M\to M\times M$ is the diagonal embedding, and if $V{}_{f}\times_g W$ is cut out transversely, then $V{}_{f}\times_g W$ is diffeomorphic by the map $(v,w)\mapsto (v,w,f(v))$ to the fiber product $(V\times W){}_{f\times g}\times_{\delta} M$.  This diffeomorphism affects the orientations by \begin{equation}\label{diagid} V{}_{f}\times_g W\cong (-1)^{\dim M(\dim M-\dim W)}(V\times W){}_{f\times g}\times_{\delta} M.\end{equation}  To see this, one can use the fact that the tangent space to $V{}_{f}\times_g W$ may be oriented as the kernel of the map $h_1\co TV\oplus TM\oplus TW\oplus TM\oplus TM\to TM\oplus TM\oplus TM\oplus TM$ defined by $h_1(v,m_0,w,m_1,m_2)=(f_*v-m_0,m_0-g_*w,m_1,m_2)$ while the tangent space to $(V\times W){}_{f\times g}\times_{\delta} M$ is oriented as the kernel of $h_2\co TV\oplus TW\oplus TM\oplus TM\oplus TM\to TM\oplus TM\oplus TM\oplus TM$ defined by $h_2(v,w,m_0,m_1,m_2)=(f_*v-m_0,g_*w-m_1,m_0-m_2,m_1-m_2)$.  There is a commutative diagram \[ \xymatrix{ TV\oplus TM\oplus TW\oplus TM\oplus TM \ar[r]^{h_1} \ar[d]^{\phi} & TM\oplus TM\oplus TM\oplus TM \ar[d]^{\psi} \\ TV\oplus TW\oplus TM\oplus TM\oplus TM \ar[r]^{h_2} & TM\oplus TM\oplus TM\oplus TM }\] where $\phi(v,m_0,w,m_1,m_2)=(v,w,m_0+m_1,m_0+m_2,m_0)$ and $\psi(m,m',m_1,m_2)=(m-m_1,-m'-m_2,m_1,m_2)$. The sign in (\ref{diagid}) is then obtained as the product of the signs of the determinants of $\phi$ and $\psi$.

If $f_0\co V^0\to M$, $f_1\co V^1\to N$, $g_0\co W^0\to M$, and $g_1\co W^1\to N$ are smooth maps such that the fiber products $V^0{}_{f_0}\times_{g_0}W^0$ and $V^1{}_{f_1}\times_{g_1} W^1$ are both cut out transversely, then the fiber product \[(V^0\times V^1)_{f_0\times f_1}\times_{g_0\times g_1}(W^0\times W^1)\] is also cut out transversely, and the map $(v_0,v_1,w_0,w_1)\mapsto (v_0,w_0,v_1,w_1)$ is a diffeomorphism of oriented manifolds \begin{equation} \label{prodid} (V^0\times V^1)_{f_0\times f_1}\times_{g_0\times g_1}(W^0\times W^1)\cong (-1)^{(\dim N-\dim V^1)(\dim M-\dim W^0)}(V^0{}_{f_0}\times_{g_0}W^0)\times(V^1{}_{f_1}\times_{g_1}W^1).\end{equation}  The sign can easily  be obtained either directly from the definition of the fiber product orientation, or by using \cite[(83)]{BC}.

\subsubsection{Signed counts of points and intersection numbers}
If $X$ is a $0$-dimensional manifold, then an orientation of $X$ of course amounts to a choice of number $\ep(x)\in \{-1,1\}$ attached to each point $x\in X$.  Assuming $X$ to be compact (\emph{i.e.}, finite) we write $\#(X)=\sum_{x\in X}\ep(x)$ and call $\#(X)$ the ``signed number of points'' of $X$.

If, where $V,W,M$ are smooth oriented manifolds, $f\co V\to M$ and $g\co W\to M$ are smooth maps such that $\dim V+\dim W=\dim M$ and such that the fiber product $W {}_g\times_{f} V$ is cut out transversely and is compact (and so is an oriented compact zero-manifold), then the \textit{intersection number} $\iota(f,g)$ of $f$ and $g$ is by definition \[ \iota(f,g)=\#\left(W {}_g\times_{f} V\right).\]  Note the reversal of the order of $f$ and $g$; this reversal is justified by the fact (noted in \cite{BC} and easily checked) that if $f$ and $g$ are embeddings of compact submanifolds $f(V)$ and $g(W)$ then the usual intersection number between $f(V)$ and $g(W)$ (given by counting intersections $m\in f(V)\cap g(W)$ with signs according to whether $T_mf(V)\oplus T_mg(W)$ has the same orientation as $T_mM$) is equal to $\iota(f,g)$ as we have just defined it.  Evidently by (\ref{commutid}) we have \[ \iota(f,g)=(-1)^{(\dim M-\dim V)(\dim M-\dim W)}\iota(g,f).\]


\subsection{Morse functions.}\label{or:morse} Let $f\co M\to \mathbb{R}$ be a Morse function where $M$ is a smooth oriented compact $n$-dimensional manifold and $h$ a Riemannian metric on $M$ making the negative gradient flow $\phi_t\co M\to M$ of $f$ Morse--Smale.  For all critical points $p$ of $f$ we have the unstable and stable manifolds \begin{align*} W^{u}_{f}(p)&=\left\{x\in M\left|\lim_{t\to -\infty}\phi_t(x)=p\right.\right\} \\
W^{s}_{f}(p)&=\left\{x\in M\left|\lim_{t\to \infty}\phi_t(x)=p\right.\right\} \end{align*}

We choose arbitrarily orientations of the unstable manifolds $W^{u}_{f}(p)$ (recall that these are diffeomorphic to open disks of dimension equal to the index $|p|_f$ of $p$), with the provisos that if $|p|_f=n$, so that $W^{u}_{f}(p)$ is an open subset of $M$, then the orientation of $W^{u}_{f}(p)$ should coincide with the orientation of $M$; and that if $|p|_f=0$, so that $W^{u}_{f}(p)=\{p\}$, then $W^{u}_{f}(p)$ should be oriented positively.  Let $i_{u,p}\co W^{u}_{f}(p)\to M$ and $i_{s,p}\co W^{s}_{f}(p)\to M$ be the inclusions.  Having oriented the $W^{u}_{f}(p)$, we orient the $W^{s}_{f}(p)$ by noting that $W^{u}_{f}(p)$ and $W^{s}_{f}(p)$ intersect transversely in the single point $p$, and requiring that \[ \iota(i_{s,p},i_{u,p})=1\] (in other words, $W^{u}_{f}(p){}_{i_{u,p}}\times_{i_{s,p}} W^{s}_{f}(p)$ is a single positively-oriented point).  

The space of parametrized negative gradient trajectories\footnote{For the most part we will use notation that suppresses the dependence of the trajectory space on the metric $h$; when we wish to record this dependence we will use the notation $\tilde{\mathcal{M}}(p,q;f,h)$.} $\tilde{\mathcal{M}}(p,q;f)$ from $p$ to $q$ may be identified with the fiber product $W^{u}_{f}(p){}_{i_{u,p}}\times_{i_{s,q}}W^{s}_{f}(q)$; the Morse--Smale condition precisely states that this fiber product is cut out transversely, and we orient $\tilde{\mathcal{M}}(p,q;f)$ by means of this identification, using the aforementioned convention for fiber products to orient $W^{u}_{f}(p){}_{i_{u,p}}\times_{i_{s,q}}W^{s}_{f}(q)$.  For $p\neq q$, the negative gradient flow provides a free $\mathbb{R}$-action on $\tilde{\mathcal{M}}(p,q;f)$.  We denote the quotient of $\tilde{\mathcal{M}}(p,q;f)$ by this $\mathbb{R}$ action by $\mathcal{M}(p,q;f)$, and we orient $\mathcal{M}(p,q;f)$ according to \ref{or:actions}.  In the case that $|p|_f-|q|_f=1$, the Morse--Smale condition implies that $\mathcal{M}(p,q;f)$ is a compact oriented zero-manifold, and we denote by \[ m_f(p,q)=\#\left(\mathcal{M}(p,q;f)\right)\] its signed number of points.

There are tautological identifications $W^{u}_{f}(p)\cong W^{s}_{-f}(p)$ and $W^{s}_{f}(p)\cong W^{u}_{-f}(p)$.   Having already oriented $W^{u}_{f}(p)$ and $W^{s}_{f}(p)$ as in the last two paragraphs, we first orient $W^{u}_{-f}(p)$ by requiring the tautological identification $W^{s}_{f}(p)\cong W^{u}_{-f}(p)$ to be orientation-preserving.  These orientations of $W^{u}_{-f}(p)$ then yield orientations of $W^{s}_{-f}(p)$ and of the spaces $\tilde{\mathcal{M}}(q,p;-f)$ and $\mathcal{M}(q,p;-f)$ by the same prescription as before.  Routine calculation then shows that the obvious identifications provide the following diffeomorphisms of oriented manifolds, where as usual we write $n=\dim M$: \begin{align}\label{su} W_{-f}^{s}(p)&\cong (-1)^{|p|_f(n-|p|_f)}W_{f}^{u}(p) \\ \label{tmqp} \tilde{\mathcal{M}}(q,p;-f)& \cong(-1)^{(|p|_f+|q|_f)(n-|p|_f)}\tilde{\mathcal{M}}(p,q;f) \\ \label{mqp} \mathcal{M}(q,p;-f)& \cong(-1)^{1+(|p|_f+|q|_f)(n-|p|_f)}\mathcal{M}(p,q;f) 
\end{align} (the last equation takes into account that the actions of $\mathbb{R}$ on $\tilde{\mathcal{M}}(p,q;f)$ and $\tilde{\mathcal{M}}(q,p;-f)$ go in opposite directions).

In the special case that $|p|_f=|q|_f+1$ we obtain \begin{equation}\label{dualm} m_{-f}(q,p)=(-1)^{n-|q|_f}m_f(p,q)=(-1)^{|q|_{-f}}m_f(p,q).\end{equation}

As described in \cite[Section 4]{S99} (see also \cite[A.1.14]{BC} for the relevant signs in the conventions that we are using), the unstable manifolds $W^u(x)$ admit partial compactifications $\bar{W}^u(p)$, whose oriented boundaries are given by\footnote{If one prefers, one could use a partial compactification with a larger boundary, namely $\coprod_{|r|_f\leq|p|_f-1}(-1)^{|p|_f-|r|_f-1}\mathcal{M}(p,r;f)\times W^u(r)$; however since the images in $M$ of those components corresponding to $|r|_f\leq |p|_f-2$ have codimension at least two we do not include them.  Similarly, as opposed to what is done below, $W^s(q)$ could be partially compactified to have the larger boundary $\coprod_{|r|_f\geq |q|_f+1}(-1)^{n-|q|_f}W^s(r)\times\mathcal{M}(r,q;f)$, and $\tilde{\mathcal{M}}(p,q;f)$ could be given the larger boundary $\left(\coprod_{|r|_{f}\leq |p|_f-1}(-1)^{|p|_f-|r|_f-1}\mathcal{M}(p,r;f)\times\tilde{\mathcal{M}}(r,q;f)\right)\sqcup (-1)^{|p|_f+|q|_f}
\left(\coprod_{|r|_{f}\geq |q|_f+1}\tilde{\mathcal{M}}(p,r;f)\times\mathcal{M}(r,q;f)\right)$.}  \begin{equation}\label{delu} \partial\bar{W}^u(p)=\coprod_{|r|_f=|p|_f-1}\mathcal{M}(p,r;f)\times W^u(r).\end{equation}  Extending the embedding $i_{u,p}\co W^u(p)\to M$ to a map on all of $\bar{W}^u(p)$ by means of the embeddings $i_{u,r}$ of the $W^u(r)$, we obtain a smooth map $\bar{i}_{u,p}\co \bar{W}^u(p)\to M$ which is a ``pseudochain'' in the sense to be described later: essentially this means that its image may be compactified by adding sets of codimension at least two (namely the unstable manifolds of some other critical points of index at most $|p|_f-2$).

Likewise, one obtains pseudochains whose domains will be denoted $\bar{W}^s(q)$ and $\tilde{\bar{\mathcal{M}}}(p,q;f)$ which partially compactify the stable manifolds and the parametrized gradient trajectory spaces, respectively.  By using the various formulas and conventions specified above (in particular using that $W^{s}_{f}(q)=W^{u}_{-f}(q)$, so that the boundary orientation of $W^s(q)$ can be deduced from (\ref{delu})), one obtains that the oriented (codimension-one) boundaries of 
the domains of these pseudochains are given by:\small \begin{align}\label{delws}
 \partial\bar{W}^s(q)&=\coprod_{|r|_f= |q|_f+1}(-1)^{n-|q|_f}W^s(r)\times\mathcal{M}(r,q;f)
\\ \nonumber \partial \tilde{\bar{\mathcal{M}}}(p,q;f)&=\left(\coprod_{|r|_{f}=|p|_f-1}\mathcal{M}(p,r;f)\times\tilde{\mathcal{M}}(r,q;f)\right)\sqcup (-1)^{|p|_f+|q|_f} 
\left(\coprod_{|r|_{f}=|q|_f+1}\tilde{\mathcal{M}}(p,r;f)\times\mathcal{M}(r,q;f)\right),\end{align}\normalsize and these boundaries are mapped into $M$ by using the inclusions of $W^s(r)$ in the case of $\partial\bar{W}^s(q)$ and by using the inclusions of $\tilde{\mathcal{M}}(r,q;f)$ and  $\tilde{\mathcal{M}}(p,r;f)$ in the case of $\partial \tilde{\bar{\mathcal{M}}}(p,q;f)$.

\section{Linking of pseudoboundaries}\label{pseudolink}

The appropriate level of generality for the consideration of linking numbers in this paper seems to be given by some natural extensions of the formalism of pseudocycles, as described in \cite[Section 6.5]{MS}.  Given a smooth map $f\co V\to M$, where $V$ is a smooth manifold (possibly with boundary) and $M$ is a smooth manifold without boundary, recall that the $\Omega$-limit set of $f$ is by definition \[ \Omega_f=\bigcap_{A\Subset V}\overline{f(V\setminus A)},\] where the notation $\Subset$ means ``is a compact subset of.'' As can easily be checked, one has \[ \overline{f(V)}=f(V)\cup \Omega_f.\] If $S\subset M$ is any  subset, $S$ is said to have ``dimension at most $d$'' if there is a smooth map $g\co W\to M$ such that $S\subset g(W)$ where $W$ is a smooth manifold all of whose components have dimension at most $d$.

\begin{definition}\label{pseudodef} Let $V$ and $M$ be smooth oriented manifolds, where $V$ might have boundary and $\dim V=k$, and let $f\co V\to M$ be a smooth map.
\begin{itemize} \item[(i)] $f$ is called a \emph{$k$-pseudochain} if $\overline{f(V)}$ is compact and $\Omega_f$ has dimension at most $k-2$.
\item[(ii)] $f\co V\to M$ is called a \emph{$k$-pseudocycle} if $f$ is a $k$-pseudochain and $\partial V=\varnothing$.
\item[(iii)] $f\co V\to M$ is called a \emph{$k$-pseudoboundary} if $f$ is a $k$-pseudocycle and there is a $(k+1)$-pseudochain $g\co W\to M$ such that $\partial W=V$ as oriented manifolds and $g|_{\partial W}=f$.  In this case the pseudochain $g$ is called a \emph{bounding pseudochain} for $f$.
\end{itemize}
\end{definition}

In the above definition we have required $V$ to be oriented.  Deleting all references to orientation gives in the obvious way definitions of ``unoriented pseudochains, pseudocycles, and pseudoboundaries;'' in the unoriented situation one may straightforwardly modify the following discussion to obtain intersection and linking numbers which are defined modulo $2$. We remark that the restriction to the boundary of a pseudochain will not necessarily be a pseudoboundary, since the $\Omega$-limit set of the restriction might have codimension one in the boundary.

As explained in \cite{Z}, a pseudocycle naturally determines a homology class in $M$, in a way which induces an isomorphism between the group $\mathcal{H}_*(X)$ of  pseudocycles modulo  pseudoboundaries (with addition given by disjoint union) and the integral homology $H_*(M;\mathbb{Z})$.  Moreover, there is a well-defined intersection pairing on $\mathcal{H}_{*}(X)$ given by the construction of \cite[p. 161]{MS}, and under the isomorphism $\mathcal{H}_*(X)\cong H_*(X;\mathbb{Z})$ this corresponds to the standard intersection pairing.  

Essentially the same construction as was used for the intersection pairing on $\mathcal{H}_*(X)$ in \cite{MS} may be used to define linking numbers between pseudoboundaries, as we now describe.

We begin with a technical transversality result.  

\begin{lemma} \label{diffu}  $M,N,Y$ be smooth manifolds,  let $f\co M\to Y$, $g\co N\to Y$ be smooth functions, and let $S$ be a compact subset of $Y$ such that, for every pair $(m,n)\in M\times N$ such that $f(m)=g(n)$ and $(f\times g)_*\co T_m M\times T_n N\to T_{(f(m),f(m))}Y\times Y$ is not transverse to $\Delta$, it holds that $f(m)\in int(S)$ (where $int(S)$ denotes the interior of $S$).  Let  $\Diff_{S}(Y)$ denote the space of diffeomorphisms of $Y$ having support contained in $S$, equipped with the (restriction of the) Whitney $C^{\infty}$ topology.  Then \[ \mathcal{S}=\left\{\phi\in \Diff_{S}(Y)\left|\left( (\phi\circ f)\times g\right)\co M\times N\to Y\times Y \mbox{ is transverse to }\Delta       \right.\right\}
\] is a residual subset of $\Diff_{S}(Y)$.
\end{lemma}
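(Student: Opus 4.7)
The plan is to apply the parametric transversality theorem to the evaluation map
$$F \colon \Diff_S(Y) \times M \times N \to Y \times Y, \qquad F(\phi,m,n) = (\phi(f(m)), g(n)).$$
Once I show that $F$ is transverse to the diagonal $\Delta \subset Y \times Y$, Sard--Smale---applied first to the Banach manifolds of $C^r$-diffeomorphisms supported in $S$ and then combined by a diagonal intersection as $r\to\infty$, as is standard---will yield that the set of $\phi$ with $(\phi \circ f) \times g$ transverse to $\Delta$ is residual in $\Diff_S(Y)$.

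The main step is to verify transversality of $F$ at a point $(\phi,m,n)$ with $y := \phi(f(m)) = g(n)$. I would first record two consequences of $\phi|_{Y \setminus S}=\mathrm{id}$: since $\phi$ is a diffeomorphism it preserves the decomposition $Y = \mathrm{int}(S) \sqcup \partial S \sqcup (Y \setminus S)$, and since every point of $\partial S$ is a limit of points of $Y \setminus S$, both $\phi$ and all of its derivatives must agree with the identity throughout $\partial S \cup (Y \setminus S)$.

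If $y \in \mathrm{int}(S)$, then given any $v \in T_y Y$ I can pick a compactly supported vector field $X$ on $Y$ with $\mathrm{supp}(X) \subset \mathrm{int}(S)$ and $X(y)=v$; composing $\phi$ with the time-$t$ flow of $X$ produces a path in $\Diff_S(Y)$ along which the derivative of $F$ at $t=0$ is $(v,0)$. Letting $v$ vary shows that $T_y Y \oplus \{0\}$ lies in the image of $dF_{(\phi,m,n)}$, which certainly spans modulo $T\Delta$. If instead $y \notin \mathrm{int}(S)$, the two observations above force $f(m) = \phi^{-1}(y) = y = g(n)$, with $f(m) \notin \mathrm{int}(S)$ and $d\phi_y = \mathrm{id}_{T_y Y}$. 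The hypothesis of the lemma then gives transversality of $(f \times g)_*$ to $\Delta$ at $(m,n)$; combined with $d\phi_y = \mathrm{id}$, this shows that the image of $dF_{(\phi,m,n)}$ obtained by varying $(m,n)$ alone is exactly $df_m(T_m M) \times dg_n(T_n N)$, which is already complementary to $T\Delta$.

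The only delicate point is not the pointwise calculation but the functional-analytic bookkeeping required to apply Sard--Smale on the Fr\'echet group $\Diff_S(Y)$; I would handle this by passing through the Banach manifolds $\Diff^r_S(Y)$ for $r$ large relative to $\dim M + \dim N - \dim Y$ and taking a countable intersection, following the well-worn template used in such transversality arguments.
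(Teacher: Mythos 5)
Your proposal is correct and follows essentially the same route as the paper: consider the evaluation map $\Theta(\phi,m,n)=(\phi(f(m)),g(n))$, verify transversality to $\Delta$ by the same two-case split (perturb $\phi$ by a flow when $\phi(f(m))\in int(S)$; invoke the hypothesis on $(f\times g)_*$ together with the fact that $\phi$ and its linearization coincide with the identity off $int(S)$ otherwise), then apply Sard--Smale on the Banach manifolds $\Diff^r_S(Y)$ for $r>\dim M+\dim N-\dim Y$ and take a countable intersection over compact exhaustions of $M\times N$ to pass to $C^\infty$. One small point of language: in the second case the image of $(f\times g)_*$ need only be \emph{transverse} to $T\Delta$ (i.e., jointly span $T(Y\times Y)$), not complementary to it, but this doesn't affect the argument.
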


\begin{proof}
See Section \ref{app}.
\end{proof}

The following consequence of Lemma \ref{diffu} is a small generalization of \cite[Lemma 6.5.5(i)]{MS}.

\begin{prop}\label{tvs1} Let $F_0\co X\to M$ be a pseudochain (where $X$ is a smooth manifold with boundary), and let $g\co W\to M$ be a pseudocycle such that $\overline{F_0(\partial X)}\cap \overline{g(W)}=\varnothing$.  Then if $\mathcal{U}$ is any neighborhood of $F_0$ in the Whitney $C^{\infty}$ topology there exists a pseudochain $F\co X\to M$ such that $F\in \mathcal{U}$ and \begin{itemize}  \item[(i)] $F|_{\partial X}=F_{0}|_{\partial X}$ \item[(ii)] $(F\times g)\co X\times W\to M\times M$ is transverse to the diagonal $\Delta$. \item[(iii)] $\Omega_F\cap \overline{g(W)}$ and  $\overline{F(X)}\cap \Omega_g$ both have dimension at most $\dim X+\dim W-\dim M-2$.\end{itemize}
\end{prop}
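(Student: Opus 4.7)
The plan is to obtain $F$ in the form $F = \phi \circ F_0$, where $\phi\in\Diff_S(M)$ is close to the identity for a compact set $S$ disjoint from $\overline{F_0(\partial X)}$.  Condition (i) is then automatic since $\phi$ restricts to the identity on $\overline{F_0(\partial X)}$, and $F$ remains a pseudochain since $\overline{F(X)} = \phi(\overline{F_0(X)})$ is compact and $\Omega_F = \phi(\Omega_{F_0})$ has the same dimension as $\Omega_{F_0}$.  To construct $S$, use that $\overline{F_0(\partial X)}$ and $\overline{g(W)}$ are disjoint compact subsets of the Hausdorff manifold $M$, and pick a compact neighborhood $S$ of $\overline{g(W)}$ with $S\cap \overline{F_0(\partial X)}=\varnothing$, so that $\overline{g(W)}\subset int(S)$.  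Any matched pair $(m,n)\in X\times W$ with $F_0(m)=g(n)$ then satisfies $F_0(m)\in \overline{g(W)}\subset int(S)$, so Lemma \ref{diffu} applied to $(F_0,g)$ produces a residual $\mathcal{S}_0\subset \Diff_S(M)$ on which condition (ii) holds.

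For (iii) I apply Lemma \ref{diffu} two more times, after choosing convenient parametrizations of the lower-dimensional sets involved.  Because $F_0$ is a pseudochain there is a smooth map $h\co Z\to M$ with $\dim Z\leq \dim X-2$ and image containing $\Omega_{F_0}$; because $g$ is a pseudocycle, $\overline{g(W)}=g(W)\cup \Omega_g$ is the image of a smooth map $\tilde g\co \tilde W\to M$ with $\dim \tilde W\leq \dim W$ (take the disjoint union of $g$ with a parametrization of $\Omega_g$, whose dimension is at most $\dim W-2$).  Since $h(z)=\tilde g(w)\in\overline{g(W)}\subset int(S)$ for any matched pair, Lemma \ref{diffu} applied to $(h,\tilde g)$ yields a residual $\mathcal{S}_1\subset \Diff_S(M)$ on which $(\phi\circ h)\times \tilde g$ is transverse to $\Delta$; the resulting fiber product has dimension $\dim Z+\dim \tilde W-\dim M\leq \dim X+\dim W-\dim M-2$, and its image in $M$ contains $\phi(\Omega_{F_0})\cap \overline{g(W)}=\Omega_F\cap \overline{g(W)}$, giving the first half of (iii).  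A symmetric argument, using a smooth parametrization $\tilde F\co \tilde X\to M$ of $\overline{F_0(X)}$ of dimension $\dim X$ and a parametrization $h'\co Z'\to M$ of $\Omega_g$ with $\dim Z'\leq \dim W-2$, provides a residual $\mathcal{S}_2\subset \Diff_S(M)$ controlling $\overline{F(X)}\cap \Omega_g$; the hypothesis of Lemma \ref{diffu} is again met because $\Omega_g\subset \overline{g(W)}\subset int(S)$.

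Finally, any $\phi\in \mathcal{S}_0\cap \mathcal{S}_1\cap \mathcal{S}_2$ sufficiently close to the identity---the intersection is dense by the Baire property of $\Diff_S(M)$---gives $F=\phi\circ F_0\in \mathcal{U}$ satisfying all three conditions.  The main point requiring care is the verification, in each of the three applications of Lemma \ref{diffu}, that every intersection point of the two maps being made transverse lies in $int(S)$; this is precisely where the hypothesis $\overline{F_0(\partial X)}\cap \overline{g(W)}=\varnothing$ is used, together with the standard availability, for pseudochains and pseudocycles, of smooth parametrizations of their $\Omega$-limit sets by manifolds of codimension at least two.
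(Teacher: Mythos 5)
Your proof is correct and follows the paper's overall strategy -- perturb $F_0$ to $F=\phi\circ F_0$ with $\phi\in\Diff_S(M)$ compactly supported away from $\overline{F_0(\partial X)}$, using Lemma \ref{diffu} several times and taking $\phi$ in the intersection of the resulting residual sets. There is, however, a genuine organizational difference worth noting. The paper's proof uses four applications of Lemma \ref{diffu} (to the pairs $(F_0,g)$, $(\alpha,g)$, $(\alpha,\beta)$, $(\beta,F_0)$) and then picks $\phi$ with $\phi\in\mathcal{U}_1\cap\mathcal{U}_2\cap\mathcal{U}_3$ and $\phi^{-1}\in\mathcal{U}_4$; the $\phi^{-1}$ step arises because the paper controls $F(X)\cap\Omega_g$ by rewriting it as $\phi\left(F_0(X)\cap(\phi^{-1}\circ\beta)(B)\right)$, so the relevant transversality condition is on $\phi^{-1}$, not $\phi$. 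You sidestep this by packaging $F_0$ together with a parametrization of $\Omega_{F_0}$ into a single map $\tilde F\co\tilde X\to M$ with image $\overline{F_0(X)}$, so that $\phi\circ\tilde F$ directly parametrizes $\overline{F(X)}$ and the transversality condition needed to bound $\overline{F(X)}\cap\Omega_g$ becomes a condition on $\phi$ itself. You also merge two of the paper's applications by parametrizing all of $\overline{g(W)}$ at once by $\tilde g = g\sqcup(\text{parametrization of }\Omega_g)$, reducing from four residual sets to three. Both arguments share the same mild imprecision: the parametrizations of the $\Omega$-limit sets may have image larger than those sets, so to meet the hypothesis of Lemma \ref{diffu} one should restrict their domains (e.g.\ to the preimage of $int(S)$), which is routine and equally glossed over in the paper. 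Your choice of $S$ as a compact neighborhood of all of $\overline{g(W)}$ (rather than of $\overline{F_0(X)}\cap\overline{g(W)}$ as in the paper) makes verifying that hypothesis more transparent in each of the three applications.
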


\begin{proof} Write $\dim X=k$, $\dim W=l$, and $\dim M=n$.  There are smooth maps $\alpha\co A\to M$ and $\beta\co B\to M$, where $A$ and $B$ are smooth manifolds whose components all have dimension at most $k-2$ and at most $l-2$ respectively, such that $\Omega_{F_0}\subset \alpha(A)$ and $\Omega_g\subset \beta(B)$.

 Since $\overline{F_0(\partial X)}\cap \overline{g(W)}=\varnothing$, and since $\overline{F_0(X)}\cap\overline{g(W)}$ is compact, we can find an open set $U\subset M$ containing $\overline{F_0(X)}\cap\overline{g(W)}$ and whose closure is disjoint from $\overline{F_0(\partial X)}$.  
According to repeated applications of Lemma \ref{diffu} the following subsets of the group $\Diff_{\bar{U}}(M)$ of diffeomorphisms with support in $\bar{U}$ are all residual in the $C^{\infty}$ topology: \begin{align*}
\mathcal{U}_1&=\left\{\phi\in \Diff_{\bar{U}}(M)\left| \left((\phi\circ F_0)\times g    \right)\co X\times W\to M\times M\mbox{ is transverse to $\Delta$}\right.\right\} \\
\mathcal{U}_2&=\left\{\phi\in \Diff_{\bar{U}}(M)\left| \left((\phi\circ \alpha)\times g    \right)\co A\times W\to M\times M\mbox{ is transverse to $\Delta$}\right.\right\} \\
\mathcal{U}_3&=\left\{\phi\in \Diff_{\bar{U}}(M)\left| \left((\phi\circ \alpha)\times \beta    \right)\co A\times B\to M\times M\mbox{ is transverse to $\Delta$}\right.\right\}.\\
\mathcal{U}_4&=\left\{\phi\in \Diff_{\bar{U}}(M)\left| \left(F_0 \times (\phi\circ \beta)    \right)\co X\times B\to M\times M\mbox{ is transverse to $\Delta$}\right.\right\} \end{align*}

In particular we can find a diffeomorphism $\phi$, arbitrarily $C^{\infty}$-close to the identity and supported in $\bar{U}$, such that $\phi\in \mathcal{U}_1\cap\mathcal{U}_2\cap\mathcal{U}_3$ and $\phi^{-1}\in\mathcal{U}_4$.   We claim that $F=\phi\circ F_0$ will have the desired properties.  

Since $F_0(\partial X)\cap (supp(\phi))=\varnothing$ property (i) of the proposition is clear.

The fact that $\phi\in\mathcal{U}_1$ immediately implies property (ii).

As for property (iii), since $\overline{F(X)}=F(X)\cup \Omega_F$ and $\overline{g(W)}=g(W)\cup \Omega_g$,  we need to show that $\Omega_F\cap g(W)$, $\Omega_F\cap \Omega_g$, and $F(X)\cap \Omega_g$ all have dimension at most $k+l-n-2$.  Now $\Omega_F\cap g(W)\subset (\phi\circ\alpha)(A)\cap g(W)$, and the fact that $\phi\in \mathcal{U}_2$ shows that $A{}_{\phi\circ \alpha}\times_g W$ is cut out transversely, so since all components of $A$ have dimension at most $k-2$ we see that $\Omega_F\cap g(W)$ has dimension at most $k+l-n-2$.  Similarly the fact that $\phi\in \mathcal{U}_3$ implies that $\Omega_F\cap \Omega_g$ has dimension at most $k+l-n-4$.  Finally, note that \[ F(X)\cap \Omega_g\subset F(X)\cap \beta(B)=\phi\left(F_0(X)\cap (\phi^{-1}\circ\beta)(B)\right),\] so the fact that $\phi^{-1}\in \mathcal{U}_4$ implies that $F(X)\cap \Omega_g$ has dimension at most $k+l-n-2$, completing the proof.
\end{proof}

Assume that the target manifold $M$ is oriented with $\dim M=n$, and let $f\co V\to M$ be a $k$-pseudoboundary and $g\co W\to M$ a $(n-k-1)$-pseudoboundary, such that $\overline{f(V)}\cap \overline{g(W)}=\varnothing$, and let $F_0\co X\to M$ be a bounding pseudochain for $f$.  Use Proposition \ref{tvs1} to perturb $F_0$ to $F\co X\to M$ obeying (i)-(iii) above; in particular $F$ is also a bounding pseudochain for $f$. The fiber product $X{}_{F}\times_g W$ is then a smooth oriented manifold of dimension zero, with $\Omega_F\cap \overline{g(W)}=F(X)\cap \Omega_g=\varnothing$  (since in this case $\dim X+\dim W-\dim M-2=-2$).  

Moreover $X{}_F\times_g W$ is compact: if $\{(x_n,w_n)\}$ is a sequence in $X{}_F\times_g W$ then since $\overline{F(X)}$ and $\overline{g(W)}$   are compact the sequence $\{(F(x_n),g(w_n))\}$ would have a subsequence (still denoted by \linebreak $\{(F(x_n),g(w_n))\}$) converging to a point $(m,m)\in (\overline{F(X)}\times \overline{g(W)})\cap \Delta$.  Now $\overline{F(X)}=F(X)\cup \Omega_F$, so since $\Omega_F\cap \overline{g(W)}=\varnothing$ we must have $m\in F(X)\setminus \Omega_F$.  But since $F(X)\cap \Omega_g=\varnothing$ this implies that also $m\in g(W)\setminus \Omega_g$.    Since $m$ lies in neither $\Omega_F$ nor $\Omega_g$ there are compact sets $K\Subset X$, $L\Subset W$ such that $m\notin \overline{F(X\setminus K)}$ and $m\notin \overline{g(W\setminus L)}$.  So since $F(x_n)\to m$ and $g(w_n)\to m$, infinitely many of the $x_n$ lie in $K$, and infinitely many of the $w_n$ lie in $L$.  So since $K$ and $L$ are compact a subsequence of $\{(x_n,w_n)\}$ converges to a pair $(x,w)\in K\times L\subset X\times W$ such that $F(x)=g(w)$, \emph{i.e.} such that $(x,w)\in F{}_X\times_g W$.  This confirms our assertion that $X{}_F\times_g W$ is compact provided that $F$ is as in Proposition \ref{tvs1}.

Since $X{}_F\times_g W$ is a compact oriented zero-manifold we can take the intersection number $\iota(g,F)=\#(X{}_F\times_g W)$ as described at the end of Section \ref{or}.  We would like to define the \textit{linking number} of the pseudoboundaries $g$ and $f$ to be equal to this intersection number; the justification of this definition requires the following:

\begin{prop}\label{indep}  Let $f\co V\to M$ and $g\co W\to M$ be two pseudoboundaries such that $\overline{f(V)}\cap \overline{g(W)}=\varnothing$ and $\dim V+\dim W+1=\dim M$.  Let $F_1\co X_1\to M$ and $F_2\co X_2\to M$ be two bounding pseudochains for $f$ such that, for $i=1,2$, \begin{itemize} \item[(i)] $F_i\times g\co X_i\times W\to M\times M$ is transverse to the diagonal $\Delta$.
\item[(ii)] $\Omega_{F_i}\cap \overline{g(W)}=\Omega_g\cap \overline{F_i(X_i)}=\varnothing$.
\end{itemize}

Then \[ \iota(g,F_1)=\iota(g,F_2).\] 
\end{prop}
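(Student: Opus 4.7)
The strategy is to mimic, one dimension higher, the compactness-and-boundary argument spelled out in the paragraph just before the proposition. I will glue $F_1$ and the orientation-reversal of $F_2$ along their common boundary $f$ to build a closed pseudocycle $F$, then intersect $F$ transversely with a bounding pseudochain for $g$ to produce a compact oriented $1$-manifold whose signed boundary comes out to $\pm(\iota(g,F_1)-\iota(g,F_2))$, forcing this difference to vanish.

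First I would form the smooth closed oriented manifold $X=X_1\cup_V(-X_2)$ by standard collar-gluing: since $\partial X_1=V=\partial X_2$ as oriented manifolds, passing from $X_2$ to $-X_2$ gives $\partial(-X_2)=-V$, so the tautological identification $V\leftrightarrow -V$ is the orientation-reversing boundary identification needed to endow $X$ with a natural orientation. Because $F_1|_V=f=F_2|_V$, the maps $F_1$ and $F_2$ fit together into a continuous map on $X$, and a standard collar-reparametrization near $V$ makes the result into a smooth $F\co X\to M$ without enlarging $\overline{F_1(X_1)}\cup\overline{F_2(X_2)}$ or $\Omega_{F_1}\cup\Omega_{F_2}$. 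Since $\overline{F(X)}=\overline{F_1(X_1)}\cup\overline{F_2(X_2)}$ and $\Omega_F\subset\Omega_{F_1}\cup\Omega_{F_2}$, the map $F$ is a $(k+1)$-pseudocycle satisfying $\Omega_F\cap\overline{g(W)}=\varnothing$ and $\overline{F(X)}\cap\Omega_g=\varnothing$.

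Next I would choose any bounding pseudochain $G_0\co Y\to M$ for the pseudoboundary $g$, so that $\partial Y=W$ and $G_0|_W=g$. Since $\overline{G_0(\partial Y)}=\overline{g(W)}$ is disjoint from $\overline{F(X)}$, Proposition \ref{tvs1} (applied with $G_0$ as the pseudochain to be perturbed and $F$ as the ambient pseudocycle) yields a pseudochain $G\co Y\to M$ with $G|_W=g$, with $G\times F$ transverse to the diagonal, and with both $\Omega_G\cap\overline{F(X)}$ and $\overline{G(Y)}\cap\Omega_F$ of dimension at most $(n-k)+(k+1)-n-2=-1$ and hence empty. The same sequence-extraction argument used for $X{}_F\times_g W$ in the paragraph immediately preceding the proposition then shows that $X{}_F\times_G Y$ is a compact oriented $1$-manifold with boundary.

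To conclude, apply the boundary formula (\ref{bdryid}) together with $\partial X=\varnothing$, $\partial Y=W$, and $G|_W=g$ to obtain $\partial(X{}_F\times_G Y)=(-1)^{n-k-1}X{}_F\times_g W$. Decomposing $X=X_1\cup_V(-X_2)$, this latter fiber product splits as $(X_1{}_{F_1}\times_g W)\sqcup((-X_2){}_{F_2}\times_g W)$, and since reversing the orientation of one factor of a transverse fiber product reverses the orientation of the fiber product, its signed count is $\iota(g,F_1)-\iota(g,F_2)$. The signed count of the boundary of a compact oriented $1$-manifold vanishes, which yields $(-1)^{n-k-1}(\iota(g,F_1)-\iota(g,F_2))=0$, as desired. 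The main technical obstacle is simultaneous $\Omega$-limit control: one must check both that the collar-smoothing used to build $F$ does not inflate $\Omega_F$, and that the sequence-compactness argument for $X{}_F\times_G Y$ really does reduce to the two empty $\Omega$-limit intersections provided by Proposition \ref{tvs1}---but the latter reduction is essentially verbatim from the compactness argument already given for $X{}_F\times_g W$.
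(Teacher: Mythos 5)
Your gluing step is the same as the paper's: form the closed oriented manifold $X$ from $X_1$ and $-X_2$ along $V$, smooth the collar so the glued map $F$ is a pseudocycle with $\Omega_F\subset\Omega_{F_1}\cup\Omega_{F_2}$, and note that the fiber product over $W$ decomposes so that $\#(X{}_F\times_g W)=\pm\bigl(\iota(g,F_1)-\iota(g,F_2)\bigr)$. The difference is what you do next: the paper simply invokes \cite[Lemma 6.5.5(iii)]{MS}, which says a pseudocycle has zero intersection number with a pseudoboundary, and is done. You instead try to re-derive that vanishing by intersecting $F$ with a bounding pseudochain $G$ for $g$ and taking the boundary of the resulting $1$-manifold $X{}_F\times_G Y$.

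That route has a genuine gap. To invoke Proposition \ref{tvs1} with $G_0\co Y\to M$ as the pseudochain to be perturbed and $F\co X\to M$ as the fixed pseudocycle, you need the hypothesis $\overline{G_0(\partial Y)}\cap\overline{F(X)}=\varnothing$, i.e.\ $\overline{g(W)}\cap\overline{F(X)}=\varnothing$. You assert this disjointness, but it is false in general: unwinding the hypotheses of Proposition \ref{indep}, one has $\overline{g(W)}\cap\overline{F_i(X_i)}=g(W)\cap F_i(X_i)$, and that set is precisely (the image of) the nonempty transverse fiber product $X_i\,{}_{F_i}\times_g W$ whose signed count is the intersection number being computed. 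So $\overline{g(W)}$ and $\overline{F(X)}$ do meet, Proposition \ref{tvs1} does not apply, and you cannot in one step perturb $G_0$ relative to its boundary into the needed general position. What is true is that $F\times g$ is already transverse along $\partial Y$, so the nontransversality of $F\times G_0$ is confined to the interior of $Y$ in the \emph{source}, but those interior points of $Y$ can still be sent by $G_0$ into $g(W)$, which is exactly where you are forbidden to perturb by composing with a diffeomorphism of $M$. To make your route rigorous you would either need a multi-step perturbation argument of the type carried out in the proof of Proposition \ref{linksym} (perturb away from $\overline{g(W)}$, keeping careful track of what is already transverse near the boundary), or a direct parametric transversality argument in the space of maps $Y\to M$ rel $\partial Y$ rather than in $\Diff_S(M)$. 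Alternatively, simply cite \cite[Lemma 6.5.5(iii)]{MS} at the point where you want $\iota(g,F)=0$, as the paper does.
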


(Of course, the argument before the proposition shows that (i) and (ii) suffice to guarantee that $X_i\,{}_{F_i}\times_g W$ is a compact oriented zero-manifold, so that $\iota(g,F_i)$ is well-defined.)

\begin{proof} We have, as oriented manifolds, $\partial X_i=V$ and $F_i|_{\partial X_i}=f$. Let $\bar{X}_1$ denote $X_1$ with its orientation reversed.  There are then neighborhoods $U_1$ of $\partial\bar{X}_1$ in $\bar{X}_1$ and $U_2$ of $\partial X_2$ in $X_2$, and orientation-preserving diffeomorphisms $\phi_1\co [0,\infty)\times V\to U_1$ and $\phi_2\co (-\infty,0]\times V\to U_2$ which restrict as the identity on $\{0\}\times V=\partial X_i$.  Gluing $\bar{X}_1$ to $X_2$ along their common boundary $V$ results in a new oriented, boundaryless manifold $X$, with an open subset $U\subset X$ which is identified via a diffeomorphism $\phi=\phi_1\cup \phi_2$ with $\mathbb{R}\times V$.  

Define a map $G_0\co X\to M$ by the requirement that $G_0|_{X_i}=F_i$.  Now $G_0$ is typically not a smooth map (its derivative in the direction normal to $\{0\}\times V$ will typically not exist), but this is easily remedied: let $\beta\co \mathbb{R}\to\mathbb{R}$ be a smooth homeomorphism such that $\beta(t)=t$ for $|t|>1$ and such that $\beta$ vanishes to infinite order at $t=0$.  Where $U\subset X$ is identified with $\mathbb{R}\times V$ as above, define $\Phi\co X\to X$ by setting $\Phi(t,v)=\left(\beta(t),v\right)$ for $(t,v)\in U$ and setting $\Phi$ equal to the identity outside $U$.  Then $\Phi$ is a smooth homeomorphism, and the function $G:=G\circ \Phi$ will now be smooth, since the normal derivatives to all orders along $\{0\}\times V$ will simply vanish.  

Now since $G=G_0\circ \Phi^{-1}$ where $\Phi^{-1}$ is a homeomorphism we have $\Omega_G=\Omega_{G_0}$.  But it is easy to check from the definitions that $\Omega_{G_0}=\Omega_{F_1}\cup \Omega_{F_2}$.  Thus $\Omega_G$, like $\Omega_{F_1}$ and $\Omega_{F_2}$, has dimension at most $\dim X-2$.  So since $\partial X=\varnothing$, $G\co X\to M$ is a pseudocycle.  Moreover we have \[ \Omega_G\cap \overline{g(W)}=(\Omega_{F_1}\cap \overline{g(W)})\cup(\Omega_{F_1}\cap \overline{g(W)})=\varnothing,\] and since $G(X)=F_1(X_1)\cup F_2(X_2)$, \[ \Omega_g\cap \overline{G(X)}=\varnothing.\]  

Furthermore, viewing the $X_i$ as  submanifolds-with-boundary of $X$ (with the orientation of $X_1$ reversed) and using that the image under $G$ of $V=X_1\cap X_2$ is disjoint from $g(W)$, we have, as oriented manifolds, \[ X{}_G\times_g W=\left(-X_1\,{}_{F_1}\times_{g}W\right)\coprod \left(X_2\,{}_{F_2}\times_{g}W\right).\]  In particular the fiber product $X{}_{G}\times_g W$ is cut out transversely, and the intersection numbers of $G,F_1,F_2$ with $g$ obey \[ \iota(g,G)=-\iota(g,F_1)+\iota(g,F_2).\]  But $G\co X\to M$ is a pseudocycle and $g\co W\to M$ is a pseudoboundary, so by \cite[Lemma 6.5.5 (iii)]{MS} one has $\iota(g,G)=0$, and so $\iota(g,F_1)=\iota(g,F_2)$.
\end{proof}

We can accordingly make the following definition:

\begin{definition}\label{lk} Let $M$ be an oriented $n$-manifold and let $f\co V\to M$ and $g\co W\to M$ be pseudoboundaries of dimension $k$ and $n-k-1$ respectively such that $\overline{f(V)}\cap \overline{g(W)}=\varnothing$.  Then the \emph{linking number} of $g$ and $f$ is \[ lk(g,f)=\#\left(X{}_{F}\times_g W\right) \] where $F\co X\to M$ is any bounding pseudochain for $f$ such that $F\times g\co X\times W\to M\times M$ is transverse to $\Delta$, and $\Omega_g\cap \overline{F(X)}=\Omega_F\cap \overline{g(W)}=\varnothing$.
\end{definition}

Of course, the existence of such an $F$ is implied by Proposition \ref{tvs1}, and the independence of $lk(g,f)$ from the choice of $F$ is given by Proposition \ref{indep}.  Moreover: 

\begin{prop}\label{linksym} For $f\co V\to M$ and $g\co W\to M$ as in Definition \ref{lk} we have \[ lk(g,f)=(-1)^{(k+1)(n-k)}lk(f,g).\]
\end{prop}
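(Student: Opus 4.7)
The plan is to construct a compact oriented $1$-manifold with boundary whose boundary expresses both $lk(g,f)$ and $lk(f,g)$ (up to signs), then use that the signed boundary count of a compact $1$-manifold vanishes.

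First I would choose bounding pseudochains $F\co X\to M$ for $f$ (with $\dim X=k+1$) and $G\co Y\to M$ for $g$ (with $\dim Y=n-k$), subject to the following genericity conditions: (a) the maps $F\times g$, $f\times G$, and $F\times G$ are each transverse to the diagonal $\Delta\subset M\times M$; (b) the $\Omega$-limit sets $\Omega_F,\Omega_G$ are disjoint from $\overline{g(W)},\overline{f(V)},\overline{F(X)},\overline{G(Y)}$ in all the relevant pairings, using Proposition \ref{tvs1} (applied in turn to perturb $F$ against $g,G$ and the $\Omega$-sets, and then to perturb $G$ against the resulting $F$ and $f$). The relevant dimension counts show that each such intersection has generic dimension at most $-1$ and so can be made empty; this simultaneous perturbation is the most fiddly point, but it is a routine (if tedious) iteration of Proposition \ref{tvs1}.

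Next I would form $Z:=X\,{}_F\times_G Y$. Since $\dim X+\dim Y-\dim M=1$, condition (a) makes $Z$ a smooth oriented $1$-manifold with boundary. The argument in the paragraph preceding Proposition \ref{indep}, applied verbatim with $(g,W)$ replaced by $(G,Y)$, shows that the $\Omega$-disjointness conditions from (b) force $Z$ to be compact.

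Now I would apply the boundary formula (\ref{bdryid}) with $\dim M=n$, $\dim X=k+1$:
\[
\partial Z \;=\; (\partial X)\,{}_F\times_G Y \,\coprod\, (-1)^{n-k-1}\,X\,{}_F\times_G(\partial Y)
\;=\; V\,{}_f\times_G Y \,\coprod\, (-1)^{n-k-1}\,X\,{}_F\times_g W,
\]
using $(\partial X,F|_{\partial X})=(V,f)$ and $(\partial Y,G|_{\partial Y})=(W,g)$. Since $\partial Z$ is the boundary of a compact oriented $1$-manifold, $\#(\partial Z)=0$, giving
\[
\#\!\left(V\,{}_f\times_G Y\right) + (-1)^{n-k-1}\#\!\left(X\,{}_F\times_g W\right) = 0.
\]

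Finally I would identify each term. By Definition \ref{lk}, $\#(X\,{}_F\times_g W)=lk(g,f)$. For the other term, the commutativity identity (\ref{commutid}) applied to $f\co V\to M$ and $G\co Y\to M$ gives
\[
V\,{}_f\times_G Y \;\cong\; (-1)^{(n-k)(n-(n-k))}\,Y\,{}_G\times_f V \;=\; (-1)^{(n-k)k}\,Y\,{}_G\times_f V
\]
as oriented $0$-manifolds, whence $\#(V\,{}_f\times_G Y)=(-1)^{(n-k)k}\,lk(f,g)$. Substituting and solving yields
\[
lk(g,f) \;=\; -(-1)^{(n-k)k-(n-k-1)}\,lk(f,g) \;=\; (-1)^{(n-k)k+(n-k)}\,lk(f,g) \;=\; (-1)^{(k+1)(n-k)}\,lk(f,g),
\]
which is the claim. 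The only genuinely nontrivial step is the simultaneous arrangement of transversality and $\Omega$-disjointness for $F$ and $G$; everything else is a straightforward application of the identities already assembled in Section \ref{or:fp}.
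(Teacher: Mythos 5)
Your proof is correct and follows essentially the same route as the paper's: both form the compact oriented one-manifold $X\,{}_F\times_G Y$, compute its boundary via (\ref{bdryid}), identify the two boundary pieces with $lk(g,f)$ and (after applying (\ref{commutid})) with $lk(f,g)$, and conclude from $\#\partial=0$. The only cosmetic difference is that the paper folds the (\ref{commutid}) step directly into the displayed boundary formula, while you apply it afterwards to the term $V\,{}_f\times_G Y$; the sign bookkeeping is equivalent.
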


\begin{proof} Let $F_1\co X\to M$ and $G_1\co Y\to M$ be bounding pseudochains for $f$ and $g$ respectively, such that $F_1\times g\co X\times W\to M\times M$ and $G_1\times f\co Y\times V\to M\times M$ are transverse to $\Delta$, and such that \[ \Omega_g\cap\overline{F_1(X)}=
\Omega_f\cap\overline{G_1(Y)}=\Omega_{F_1}\cap\overline{g(W)}=\Omega_{G_1}\cap\overline{f(V)}=\varnothing.\]

First, using repeated applications of Lemma \ref{diffu}, one can perturb $F_1$ and $G_1$ to maps $F\co X\to M$ and $G\co Y\to M$ which (in addition to the above properties of $F_1$ and $G_1$) also have the properties that $F\times G\co X\times Y\to M\times M$ is transverse to $\Delta$, and $\Omega_F\cap \overline{G(Y)}=\Omega_G\cap \overline{F(X)}=\varnothing$  (More specifically, and ignoring issues related to the $\Omega$-limit sets which can be handled as in the proof of Proposition \ref{tvs1}, first apply Lemma \ref{diffu} with one map equal to $G_1$ and the other equal to $F_1|_{F_{1}^{-1}(U)}$ for some small neighborhood $U$ of $\overline{f(V)}$ to perturb $G_1$ to a new map $G_2$ which has no nontransverse intersections with $F_1$ or $f$ near $\overline{f(V)}$.  Then similarly perturb $F_1$ to $F_2$ which has no nontransverse intersections with $G_2$ or $g$ near $\overline{g(W)}$.  Then finally apply Lemma \ref{diffu} to $F_2$ and $G_2$ on a suitable compact subset $S$ which is disjoint from $\overline{f(V)}\cup\overline{g(W)}$.  We leave the details to the reader.)

The fiber product $X{}_{F}\times_G Y$ will then be an oriented compact one-manifold with oriented boundary given by, according to (\ref{bdryid}) and (\ref{commutid}), \begin{align*} 
\partial\left(X{}_{F}\times_G Y\right)&=\left(V{}_f\times_G Y\right)\coprod (-1)^{n-k-1}\left(X{}_F\times_g W\right)
\\&\cong (-1)^{k(n-k)}\left(Y{}_G\times_f V\right)\coprod (-1)^{n-k-1}\left(X{}_F\times_g W\right).\end{align*}  So the signed number of points of the boundary the oriented compact one-manifold $X{}_F\times_G Y$ is equal to \[ (-1)^{k(n-k)}lk(f,g)+(-1)^{n-k-1}lk(g,f).\]  But the signed number of points of the boundary of any oriented compact one-manifold is zero, and setting the above expression equal to zero yields the result.
\end{proof}

While we primarily consider pseudochains and pseudoboundaries in this paper, it is natural to ask when these can be replaced by smooth maps defined on compact smooth manifolds.  The following lemma helps to answer this question in some cases.

\begin{lemma} \label{smoothen}
Let $\phi\co V\to M$ be a $k$-pseudoboundary and let $U$ be any open neighborhood of $\overline{\phi(V)}$.  Then for some positive integer $N$, there is a compact oriented $k$-manifold $B$ and a smooth map $f\co B\to M$ which is a pseudoboundary, such that $f(B)\subset U$ and such that, for every $(n-k-1)$-pseudoboundary $g\co W\to M$ such that $g(W)\cap U=\varnothing$, we have \[ lk(g,f)=N\,lk(g,\phi).\]
\end{lemma}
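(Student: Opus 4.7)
The plan is to combine Zinger's isomorphism between pseudocycle homology and integral singular homology (\cite[Theorem 1.1]{Z}) with Thom's theorem on smooth realization of integer homology classes up to a positive multiple (\cite[Th\'eor\`eme III.9]{T}), then to splice together two bounding pseudochains.

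First I would fix a compact codimension-zero submanifold-with-boundary $K\subset U$ with $\overline{\phi(V)}$ in its interior.  The pseudocycle $\phi$ then determines a class $[\phi]_K\in H_k(K;\Z)$, and by Thom's theorem there is a positive integer $N$ together with a smooth map $f\co B\to K$ from a compact oriented $k$-manifold $B$ whose class satisfies $[f]_K=N[\phi]_K$ in $H_k(K;\Z)$.  Since $\phi$ is a pseudoboundary in $M$, the push-forward of $[\phi]_K$ to $H_k(M;\Z)$ vanishes; hence so does $[f]_M=N[\phi]_M$, and $f$ is automatically a pseudoboundary in $M$ (with $f(B)\subset K\subset U$ as required).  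Applying Zinger's isomorphism inside $K$ to the equation $[f]_K=N[\phi]_K$ then produces a $(k+1)$-pseudochain $\Psi\co Y\to K$ whose oriented boundary is $f\sqcup(-N\phi)$, where $-N\phi$ denotes $N$ disjoint copies of $\phi$ with reversed orientation.

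Next, given any bounding pseudochain $\Phi\co X\to M$ for $\phi$, its $N$-fold disjoint union $N\Phi\co NX\to M$ bounds $N\phi$.  Gluing $\Psi$ to $N\Phi$ along their common boundary $N\phi$, and smoothening at the seam via the cutoff construction used in the proof of Proposition \ref{indep}, yields a bounding pseudochain $\tilde F\co \tilde X\to M$ for $f$.  To compute $lk(g,f)$ for a given $(n-k-1)$-pseudoboundary $g$ with $g(W)\cap U=\varnothing$, I would apply Proposition \ref{tvs1} to perturb $\tilde F$ to meet Definition \ref{lk}'s requirements against $g$; since $\overline{g(W)}\subset M\setminus U$ is already disjoint from $\overline{\Psi(Y)}\subset K$, the supporting diffeomorphism of the perturbation (which by Proposition \ref{tvs1}'s proof may be localized near $\overline{\tilde F(\tilde X)}\cap \overline{g(W)}$) can be chosen disjoint from $\overline{\Psi(Y)}$, leaving $\Psi$ untouched.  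The $\Psi$-component of the resulting fiber product is then empty, and
\[
lk(g,f)=\#\bigl(\tilde X\,{}_{\tilde F}\times_g W\bigr)=\#\bigl(NX\,{}_{N\Phi}\times_g W\bigr)=N\,\#\bigl(X\,{}_\Phi\times_g W\bigr)=N\,lk(g,\phi).
\]

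The hard part, I expect, is the first step: invoking Thom's theorem on the compact manifold-with-boundary $K$ to obtain a smooth representative $f$ of $N[\phi]_K$ whose image is constrained to lie in $K$.  This is the sole source of the positive integer $N$, whose necessity in positive characteristic is foreshadowed by the discussion following the statement of this lemma and traces back to \cite[Th\'eor\`eme III.9]{T}; once it is handled, the remaining ingredients (Zinger's isomorphism, the gluing at the seam, and Proposition \ref{tvs1}) are essentially bookkeeping.
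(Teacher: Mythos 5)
Your proposal matches the paper's proof in essentially all respects: fix a compact manifold-with-boundary inside $U$ containing $\overline{\phi(V)}$, invoke Thom's realization theorem to produce the integer $N$ and the closed smooth $k$-manifold $B$, invoke Zinger's isomorphism to get a pseudochain cobordism between $f$ and $N\phi$ inside that neighborhood, glue to $N$ copies of a bounding pseudochain for $\phi$, and localize the transversality perturbation away from $U$ so that the linking-number computation reduces to the $N\Phi$ part. The only cosmetic difference is that the paper uses a nested pair $C_2\subset U_1$ (applying Thom on $C_2$, which is a finite polyhedron, and then perturbing the continuous Thom representative into the open set $U_1$) rather than your single $K$, and it arranges the gluing so that the final bounding pseudochain literally agrees with $N\Phi$ outside $U$ rather than invoking Proposition~\ref{tvs1}'s localization; both handle the same issues you flag as the "hard part."
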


\begin{proof} 
Choose an open subset $U_1\subset U$ such that $\bar{U}_1$ is a smooth compact manifold with boundary and $\overline{\phi(V)}\subset U_1\subset \bar{U}_1\subset U$ (for instance, $\bar{U}_1$ could be taken as a regular sublevel set for some smooth function supported in $U$ and equal to $-1$ on $\overline{\phi(V)}$).  Let $C_2$ be the image of $\bar{U}_1$ under the time-one flow of some vector field that points strictly into $U_1$ along $\partial \bar{U}_1$ and vanishes on $\overline{\phi(V)}$, so in particular $C_2$ is a smooth compact manifold with boundary and we have $\overline{\phi(V)}\subset C_2\subset U_1$, with the inclusion $i\co C_2\to U_1$ a homotopy equivalence.

Let $[\phi]\in H_k(U_1;\Z)$ denote the homology class of $\phi$, as given by the isomorphism $\Phi$ from \cite[Theorem 1.1]{Z} between the homology of $U_1$ and the group of equivalence classes of pseudocycles in $U_1$. Since $C_2$, like any smooth compact  manifold with boundary, is homeomorphic to a finite polyhedron, \cite[Th\'eor\`eme III.4]{T} gives a positive integer $N$, a smooth compact oriented $k$-manifold $B$ without boundary, and a continuous map $f^{0}\co B\to C_2$ such that $(f^{0})_*[B]=Ni_{*}^{-1}[\phi]$.  So if $f\co B\to U_1$ is a small perturbation of $f^0$ which is of class $C^{\infty}$, then  $f_{*}[B]=N[\phi]\in H_k(U_1;\Z)$.  

We can now think of $f$ as a pseudocycle in $U_1$; as is clear from the construction of the isomorphism $\Phi$ in \cite[Section 3.2]{Z}, the homology class determined by $f$ under $\Phi$ is just $f_{*}[B]$.  Let $NV$ denote the oriented manifold obtained by taking $N$ disjoint copies of $V$, and let $\phi^{N}\co NV\to U_1\subset M$ be the pseudocycle equal to $\phi$ on each copy of $V$.  The injectivity of Zinger's isomorphism $\Phi$ shows that $f$ and $\phi^{N}$ are equivalent as pseudocycles in $U_1$, \emph{i.e.}, there is an oriented manifold with boundary $X_1$ with $\partial X_1=B\coprod (-NV)$ and a pseudochain $F_1\co X_1\to U_1$ such that $F_1|_{B}=f$ and $F_1|_{-NV}=\phi^{N}$.  

Now $\phi\co V\to M$ was assumed to be a pseudoboundary, so taking $N$ copies of a bounding pseudochain for $\phi$ gives a bounding pseudochain $F_2\co X_2\to M$ for $\phi^{N}\co NV\to M$.  A gluing construction just like the one in the second paragraph of the proof of   
Proposition \ref{indep} then gives a bounding pseudochain $F\co X\to M$ for $f$, where $X$ is the smooth manifold resulting from gluing $X_1$ and $X_2$ along $NV$.  In particular this shows that $f$ is a pseudoboundary in $M$.  Moreover, since $\bar{U}_1\subset U$ the gluing construction can be arranged in such a way that $F^{-1}(M\setminus U)=F_{2}^{-1}(M\setminus U)$, and $F|_{F^{-1}(M\setminus U)}=F_{2}|_{F_{2}^{-1}(M\setminus U)}$.  So if $g\co W\to M$ is any pseudoboundary such that $g(W)\subset M\setminus U$, then we have \[ lk(g,f)=\#(X{}_{F}\times_g W)=\#\left(X_2\,{}_{F_2}\times_g W\right)=N\,lk(g,\phi),\] since $F_2\co X_2\to M$ was obtained by taking $N$ copies of a bounding pseudochain for $\phi$.  
\end{proof}

\section{Operations on the Morse complex}\label{ops}

Let $M$ be a compact smooth oriented $n$-manifold and let $f\co M\to\R$ be a Morse function, and fix a coefficient ring $\mathbb{K}$.  We will work with respect to a metric $h$ which belongs to the intersection of the  residual sets given by applying the forthcoming Proposition \ref{genmet} to various maps into $M$ that will be specified later; in particular, the negative gradient flow of $f$ with respect to $h$ is Morse--Smale.  Let $Crit(f)$ denote the set of critical points of $f$, and for $p\in Crit(f)$ write $|p|_f$ for the index of $p$.  As in Section \ref{or:morse}, orient the unstable manifolds $W^{u}_{f}(p)$ in such a way that when $|p|_f=n$ (so that $W^{u}_{f}(p)$ is an open subset of $M$) the orientation of $W^{u}_{f}(p)$ agrees with that of $M$, and when $|p|_f=0$, $W^{u}_{f}(p)$ is a positively oriented point.  This then induces orientations of the various $W^{s}_{f}(p)$, $W^{u}_{-f}(p)$, $W^{s}_{-f}(p)$, $\tilde{\mathcal{M}}(p,q;f)$, and $\mathcal{M}(p,q;f)$ as prescribed in Section \ref{or:morse}.  Note that these prescriptions also ensure that when $|p|_{-f}=n$ (so $|p|_f=0$) the orientation of $W^{u}_{-f}(p)$ agrees with that of $M$, and when $|p|_{-f}=0$, $W^{u}_{-f}(p)$ is a positively oriented point.

  When $|p|_f=|q|_f+1$, the Morse--Smale condition ensures that $\mathcal{M}(p,q;f)$ is a compact $0$-dimensional oriented manifold, and so has a signed number of points $\#_{\mathbb{K}}\left(\mathcal{M}(p,q;f)\right)$, evaluated in $\mathbb{K}$ (using the unique unital ring homomorphism $\Z\to\mathbb{K}$).

The Morse complex $(CM_*(f;\mathbb{K}),d_f)$ is defined as usual by letting $CM_k(f)$ be the free $\mathbb{K}$-module generated by the index-$k$ critical points of $f$, setting $CM_{*}(f;\mathbb{K})=\oplus_{k=0}^{n}CM_k(f)$, and defining $d_f=\oplus_k d_{f,k}$ where $d_{f,k}\co CM_{k}(f;\mathbb{K})\to CM_{k-1}(f;\mathbb{K})$ is defined by extending linearly from, for $p\in Crit(f)$ with $|p|_f=k$, \[ d_{f,k}(p)=\sum_{\scriptsize{\begin{array}{c} q\in Crit(f):\\|q|_f=k-1\end{array}}} \#_{\mathbb{K}}\left(\mathcal{M}(p,q;f)\right)q.\]  As is familiar (see \emph{e.g.} \cite{S93}), one has $d_f\circ d_f=0$, and the resulting homology $HM_*(f;\mathbb{K})$ is canonically isomorphic to the singular homology $H_*(M;\mathbb{K})$ of $M$ with coefficients in $\mathbb{K}$.

Moreover, given our orientation conventions, there is a canonical element $M_f\in CM_n(f;\mathbb{K})$, defined by \begin{equation} \label{mdef} M_f=\sum_{\scriptsize{\begin{array}{c} p\in Crit(f):\\|p|_f=n\end{array}}}p.\end{equation} The following shows that $M_f$ is a cycle in the Morse chain complex; in view of this, it is easy to see that $M_f$ represents the fundamental class of $M$ under the isomorphism with singular homology (using for instance the construction of this isomorphism given in \cite{S99}).

\begin{prop}\label{dmzero} $d_f M_f=0$.  
\end{prop}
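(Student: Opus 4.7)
The proposal is to prove $d_f M_f = 0$ by a direct boundary-counting argument on stable manifolds of index-$(n-1)$ critical points, exploiting the fact that when $|p|_f = n$ the stable manifold $W^s_f(p)$ reduces to the single positively-oriented point $\{p\}$.

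First I would unfold the definitions: expanding $M_f$ and using linearity,
\[
d_f M_f = \sum_{|q|_f = n-1}\left(\sum_{|p|_f=n}\#_{\K}\!\left(\mathcal{M}(p,q;f)\right)\right)\! q,
\]
so it suffices to show that for each $q\in Crit(f)$ with $|q|_f=n-1$ the inner sum $\sum_{|p|_f=n}\#\mathcal{M}(p,q;f)$ vanishes (as an integer, hence certainly in $\K$).

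Fix such a $q$. Since $\dim W^s_f(q) = n - |q|_f = 1$, the partial compactification $\bar{W}^s(q)$ constructed in Section \ref{or:morse} is a compact oriented one-manifold with boundary. The signed count of points on the boundary of any compact oriented one-manifold is zero, so $\#\bigl(\partial\bar W^s(q)\bigr)=0$. On the other hand, formula (\ref{delws}) identifies
\[
\partial\bar{W}^s(q) = \coprod_{|r|_f = n} (-1)^{\,n-|q|_f}\,W^s(r)\times \mathcal{M}(r,q;f) = -\coprod_{|r|_f=n}W^s(r)\times \mathcal{M}(r,q;f),
\]
and the orientation conventions in Section \ref{or:morse} force $W^s_f(r) = \{r\}$ to be a single positively-oriented point whenever $|r|_f = n$ (since $W^u_f(r)$ is then oriented to agree with $M$ and the intersection relation $\iota(i_{s,r},i_{u,r})=1$ picks out the positive orientation of the singleton). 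Consequently $\#\!\left(W^s(r)\times\mathcal{M}(r,q;f)\right) = \#\mathcal{M}(r,q;f)$, and the vanishing of $\#\partial\bar W^s(q)$ reads
\[
0 = -\sum_{|r|_f = n}\#\mathcal{M}(r,q;f),
\]
which is exactly the coefficient of $q$ in $d_fM_f$ up to sign.

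There is no real obstacle here beyond making sure the orientation/sign of the boundary formula (\ref{delws}) is applied correctly in the case $|q|_f = n-1$; the geometric content is just that the one-dimensional stable manifolds emanating (in backward time) from an index-$(n-1)$ critical point are finitely many arcs whose other ends limit, with matching signs, to index-$n$ critical points, and the boundary of a compact oriented $1$-manifold has vanishing signed count. Assembling these vanishings over all $q$ with $|q|_f = n-1$ yields $d_fM_f = 0$.
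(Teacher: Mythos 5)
Your proof is correct and is essentially identical to the paper's: both fix an index-$(n-1)$ critical point $q$, observe that $\bar W^s_f(q)$ is a compact oriented $1$-manifold whose signed boundary count vanishes, invoke formula (\ref{delws}) to identify that boundary with $-\coprod_{|r|_f=n}W^s(r)\times\mathcal{M}(r,q;f)$, and use the orientation convention that $W^s_f(r)$ is a positively-oriented point for $|r|_f=n$.
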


\begin{proof} It suffices to show that, for any $q\in Crit(f)$ with $|q|_f=n-1$, the coefficient on $q$ in $d_fM_f$ is equal to zero.  
This coefficient is equal to \[ \sum_{\scriptsize{\begin{array}{c} p\in Crit(f):\\|p|_f=n\end{array}}}\#_{\mathbb{K}}\left(\mathcal{M}(p,q;f)\right).\]  

Now since $|q|_{f}=n-1$, the Morse--Smale condition (and the trivial fact that no critical points of $f$ have index larger than $n$) implies that the partial compactification $\bar{W}^{s}_{f}(q)$ described in Section \ref{or:morse} is in fact compact.  Consequently $\#_{\mathbb{K}}\left(\partial \bar{W}^{s}_{f}(q)\right)=0$, since the signed number of points in the boundary of a compact one-manifold is always zero.  Also, our orientation conventions ensure that, for each critical point $p$ with $|p|_f=n$, $W^{s}_{f}(p)$ is a positively-oriented point.  So consulting (\ref{delws}) we obtain \[ 0=\#_{\mathbb{K}}\left(\partial \bar{W}^{s}_{f}(q)\right)=-\sum_{\scriptsize{\begin{array}{c} p\in Crit(f):\\|p|_f=n\end{array}}}\#_{\mathbb{K}}\left(\mathcal{M}(p,q;f)\right),\] as desired.
\end{proof}

Of course, this can all be done with respect to $-f$ in place of $f$, and with the prescriptions above the $\mathbb{K}$-modules $CM_{n-k}(-f;\mathbb{K})$ and $CM_k(f;\mathbb{K})$ are defined identically.  We may then define a $\mathbb{K}$-bilinear pairing \begin{align*} \Pi\co CM_{n-*}(-f;\mathbb{K})\times CM_{*}(f;\mathbb{K})&\to \mathbb{K} \\ \left(\sum_{q\in Crit(f)}a_q q,\sum_{p\in Crit(f)}b_p p\right)&\mapsto \sum_{p\in Crit(f)}a_p b_p \end{align*}

Equation (\ref{dualm}) then translates to \begin{equation}\label{adjpi} \Pi\left(d_{-f}x,y\right)=(-1)^{n-k+1}\Pi\left(x,d_f y\right)     \quad \mbox{for } x\in CM_{n-k+1}(-f;\mathbb{K}),\,y\in CM_k(f;\mathbb{K}),\end{equation} so that $\Pi$ descends to a pairing $\underline{\Pi}\co HM_*(-f;\mathbb{K})\times HM_*(f;\mathbb{K})\to\K$. Given that the Morse--Smale condition guarantees that if $p$ and $q$ are distinct critical points of the same index then $W^{s}(q)\cap W^{u}(p)=\varnothing$,  it is easy to check that, with respect to the identifications of $HM_{*}(\pm f;\K)$ with $H_*(M;\K)$ described in \cite{S99}, this homological pairing coincides with the standard intersection pairing on $M$ (recall from Section \ref{or:morse} that for $p\in Crit(f)$ the direct sum decomposition $T_pM=T_{p}W^{u}_{-f}(p)\oplus T_{p}W^{u}_{f}(p)$ respects the orientations, in view of which $\underline{\Pi}$ has the correct sign to agree with the standard intersection pairing).

From the pairing $\Pi$ we may construct a \emph{linking pairing} between the image of $d_{-f}$ and the image of $d_f$: \begin{align} \Lambda\co \left(Im(d_{-f})\right)\times \left(Im(d_f)\right)&\to\mathbb{K} \nonumber \\ \label{mlpair}
(x,y)&\mapsto \Pi(x,z) \quad \mbox{for any $z$ such that $d_f z=y$}.\end{align}  The adjoint relation (\ref{adjpi}) and the fact that $x\in Im(d_{-f})$ readily imply that $\Pi(x,z)$ is indeed independent of the choice of $z$ such that $d_f z=y$.   Also, for $x\in CM_{n-k-1}(-f;\mathbb{K})\cap Im(d_{-f})$ and $y\in CM_k(f;\mathbb{K})\cap Im(d_f)$ the above definition is equivalent to \begin{equation}\label{altlam} \Lambda(x,y)=(-1)^{n-k}\Pi(w,y)\quad \mbox{for any $w$ such that $d_{-f}w=x$}.\end{equation}

We now turn to a transversality result for intersections of Morse trajectories with smooth maps, which, while following from fairly standard methods, will be of fundamental importance for our operations on the Morse chain complex.  Be given an exhausting Morse function $f\co M\to \R$ on an $n$-dimensional smooth manifold $m$, and let $Crit(f)$ denote the set of critical points of $f$.  If $h$ is a Riemannian metric and $p,q\in Crit(f)$ we have the inclusions of the stable and unstable manifolds $i_{s,q}\co W^{s}_{f}(q;h)\to M$, $i_{u,p}\co W^{u}_{f}(p;h)\to M$ and the trajectory space $\tilde{\mathcal{M}}(p,q;f,h)=W^{u}_{f}(p;h){}_{i_{u,p}}\times_{i_{s,p}} W^{s}_{f}(q;h)$  In \cite[Section 2.3]{S93} Schwarz constructs a Banach manifold $\mathcal{G}$ all of whose members are smooth Riemannian metrics, and shows that there is a residual subset $\mathcal{R}_0\subset\mathcal{G}$ such that for all $h\in\mathcal{R}_0$ the negative gradient flow of $f$ with respect to $h$ satisfies the Morse--Smale condition, which is to say that the fiber products $\tilde{\mathcal{M}}(p,q;f,h)=W^{u}_{f}(p;h){}_{i_{u,p}}\times_{i_{s,p}} W^{s}_{f}(q;h)$ are all cut out transversely.  

Of course,  $\tilde{\mathcal{M}}(p,q;f,h)$ may be identified with the space of smooth maps $\gamma\co \mathbb{R}\to M$ such that $\dot{\gamma}(t)+\nabla^h f(\gamma(t))=0$ for all $t$ and $\lim_{t\to -\infty}\gamma(t)=p$ and $\lim_{t\to\infty}\gamma(t)=q$.  Under this identification we have an embedding \begin{align*} e_{pq}\co  \tilde{\mathcal{M}}(p,q;f,h)&\to M \\ \gamma&\mapsto \gamma(0) \end{align*}

Where $\mathbb{R}_+$ denotes the set of positive real numbers, for any $k\in\mathbb{Z}_+$ define \begin{align*} E_k\co \tilde{\mathcal{M}}(p,q;f,h)\times \mathbb{R}_{+}^{k-1}&\to M^k
\\ (\gamma,t_1,\ldots,t_{k-1}) & \mapsto \left(\gamma(0),\gamma(t_1),\gamma(t_1+t_2),\ldots,\gamma\left(\sum_{i=0}^{k-1}t_{i}\right)\right).\end{align*}  (Thus, viewing $\mathbb{R}_{+}^{0}$ as a one-point set, $E_0$ coincides with $e_{pq}$.)

\begin{prop} \label{genmet} Let $k\in\mathbb{Z}_+$ and for $0\leq i\leq k-1$ let $V_i$ be a smooth manifold and $g_i\co V_i\to M$ be a smooth map such that $g_i(V_i)\cap Crit(f)=\varnothing$.  Then there is a residual subset $\mathcal{R}\subset \mathcal{G}$ such that for every $h\in\mathcal{R}$ the negative gradient flow of $f$ with respect to $h$ is Morse--Smale, and for all $p,q\in Crit(f)$ the fiber product \[ \mathcal{V}(p,q,f,g_0,\ldots,g_{k-1};h):=\left(V_0\times\cdots\times V_{k-1}\right){}_{g_0\times\cdots\times g_{k-1}}\times_{E_k}\left(\tilde{\mathcal{M}}(p,q;f,h)\times \mathbb{R}_{+}^{k-1}\right) \] is cut out transversely.
\end{prop}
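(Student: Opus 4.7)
The plan is to adapt Schwarz's parametric Sard--Smale argument for the genericity of the Morse--Smale condition \cite[Section 2.3]{S93} to accommodate the extra evaluation constraints imposed by the maps $g_i$. For each fixed pair $p,q \in Crit(f)$, I would build the universal moduli space
\[ \tilde{\mathcal{M}}^{univ}(p,q;f) = \{(h,\gamma) \,:\, h \in \mathcal{G},\ \gamma \in \tilde{\mathcal{M}}(p,q;f,h)\}, \]
which carries a smooth Banach manifold structure by exactly the argument of \cite[Section 2.3]{S93} (the universal linearization is already surjective without restricting the metric). Define the universal evaluation
\[ \mathbf{E}_k : \tilde{\mathcal{M}}^{univ}(p,q;f) \times \mathbb{R}_+^{k-1} \to M^k, \quad (h,\gamma,t_1,\ldots,t_{k-1}) \mapsto \Bigl(\gamma(0),\gamma(t_1),\ldots,\gamma\bigl(\textstyle\sum_{i=1}^{k-1} t_i\bigr)\Bigr), \]
and consider the universal fiber product
\[ \mathcal{V}^{univ} = (V_0 \times \cdots \times V_{k-1}) \,{}_{g_0 \times \cdots \times g_{k-1}}\!\!\times_{\mathbf{E}_k}\bigl(\tilde{\mathcal{M}}^{univ}(p,q;f) \times \mathbb{R}_+^{k-1}\bigr). \]

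The heart of the proof is showing that $\mathcal{V}^{univ}$ is cut out transversely, which after the usual quotient-of-linearizations reduction amounts to proving that at every zero the differential of $\mathbf{E}_k$ in the variables $(h,\gamma,t_1,\ldots,t_{k-1})$ surjects onto $T(M^k)$. Set $s_j := \sum_{i \leq j} t_i$ (with $s_0 = 0$) and $x_j := \gamma(s_j)$. The hypothesis $g_i(V_i) \cap Crit(f) = \varnothing$ forces each $x_j$ to be a regular point of $f$, and because $s_0 < s_1 < \cdots < s_{k-1}$ on the strictly $f$-decreasing trajectory $\gamma$, the points $x_0,\ldots,x_{k-1}$ are pairwise distinct; choose pairwise disjoint open neighborhoods $U_j \ni x_j$, each disjoint from $Crit(f)$.

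For each $j$, I would realize an arbitrary $v_j \in T_{x_j} M$ as $d\mathbf{E}_k$ applied to a suitable variation: (a) the component of $v_j$ parallel to $\dot\gamma(s_j)$, which is nonzero since $x_j \notin Crit(f)$, is supplied by varying $t_j$; (b) for the hyperplane of directions normal to $\dot\gamma(s_j)$, the standard Floer-type local surjectivity argument (essentially the one in \cite[Proposition 2.24]{S93}, but applied at an interior regular point of $\gamma$ rather than at an endpoint) shows that infinitesimal metric perturbations $\xi \in T_h \mathcal{G}$ supported in $U_j$ drive the linearized flow equation so as to realize any normal displacement of $\gamma$ at time $s_j$. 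Because the $U_j$ are pairwise disjoint these constructions do not interfere, so the combined map $T_h\mathcal{G} \oplus \mathbb{R}^{k-1} \to T_{x_0}M \oplus \cdots \oplus T_{x_{k-1}}M$ is surjective, which gives the required transversality of $\mathcal{V}^{univ}$.

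Once $\mathcal{V}^{univ}$ is a smooth Banach manifold, the projection $\pi : \mathcal{V}^{univ} \to \mathcal{G}$ is Fredholm, and the Sard--Smale theorem provides a residual subset $\mathcal{R}_{p,q} \subset \mathcal{G}$ of regular values; for $h \in \mathcal{R}_{p,q}$ the fiber $\pi^{-1}(h)$ is precisely $\mathcal{V}(p,q,f,g_0,\ldots,g_{k-1};h)$, cut out transversely. Since $Crit(f)$ is finite, intersecting the finitely many $\mathcal{R}_{p,q}$ with Schwarz's Morse--Smale residual set $\mathcal{R}_0$ yields the required $\mathcal{R}$. The main obstacle is the local surjectivity claim in step (b): one must verify carefully that near a regular point of $f$, the space of achievable infinitesimal changes in the negative gradient vector field under metric variations is rich enough to move the time-$s_j$ value of a trajectory in any prescribed normal direction. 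This is the same transversality lemma used by Schwarz at the endpoints, now applied at an interior time, and it is the one place where the hypothesis $g_i(V_i) \cap Crit(f) = \varnothing$ is indispensable.
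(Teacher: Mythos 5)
Your framework coincides with the paper's — pass to the universal moduli space $\tilde{\mathcal{M}}^{univ}(p,q;f)$ over $\mathcal{G}$, prove the universal evaluation is a submersion, form the universal fiber product, and apply Sard--Smale to the projection to $\mathcal{G}$ — but the submersion argument as you wrote it does not go through. You split each $v_j\in T_{\gamma(s_j)}M$ into a component along $\dot\gamma(s_j)$, to be realized by varying $t_j$, and a normal component, to be realized by metric perturbations near $\gamma(s_j)$. The tangential half fails outright for $j=0$: there is no $t_0$, the time $s_0=0$ is pinned, so your recipe supplies nothing for the component of $v_0$ along $\dot\gamma(0)$. (The reparametrization direction $\dot\gamma\in\ker\Phi_*$ moves every $\gamma(s_j)$ by the same tangential amount and so does not isolate $j=0$.) Moreover, varying $t_j$ displaces $\gamma(s_{j'})$ for \emph{every} $j'\geq j$, so the $t_j$-variations are not independent; your remark that the constructions ``do not interfere'' because the $U_j$ are disjoint addresses the metric perturbations but not this coupling among the time parameters, which would need a backward induction to untangle. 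Finally, the normal-displacement claim in your step (b) is a nontrivial assertion that you invoke but do not prove, and it is not the content of \cite[Proposition 2.24]{S93}.

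The paper's proof avoids all of this by showing surjectivity already on $T_{(\gamma,h)}\tilde{\mathcal{M}}^{univ}(p,q;f)\times\{\vec{0}\}$, i.e., without touching the $t_j$'s at all, and without any tangent/normal split. Writing the universal linearization along $\gamma$ (as in \cite[(4.15)]{S99}) in the form $\Phi_*(\xi,A)=\dot\xi+S(t)\xi+A\cdot\nabla^h f$, where $\xi$ is the curve variation and $A$ the metric variation, one may choose $\xi$ to be any compactly supported section with $\xi(s_j)=v_j$ for \emph{arbitrary} prescribed $v_j$ (the times $s_0<\dots<s_{k-1}$ are distinct), and then, because $\gamma$ is a nonconstant gradient trajectory so $\nabla^h f(\gamma(t))\neq 0$ on the support of $\xi$, solve the pointwise linear-algebra problem $A\cdot\nabla^h f=-\dot\xi-S(t)\xi$ for a compactly supported symmetric $A$, obtaining $(\xi,A)\in\ker\Phi_*$. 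Since $\xi(s_j)$ can be prescribed freely in all directions, this realizes all of $\bigoplus_j T_{\gamma(s_j)}M$ at once, with no asymmetry at $j=0$ and no bookkeeping of the $t_j$'s. Your overall plan is the right one, but you should replace the tangent/normal dichotomy with this direct argument.
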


\begin{proof}
See Section \ref{app}.
\end{proof}

\subsection{Cap products}

Continuing to fix the above Morse function $f$, let $g\co V\to M$ be any pseudochain, where $V$ is an oriented $v$-dimensional manifold with (possibly empty) boundary and $v\leq n$.  Thus $\overline{g(V)}=g(V)\cup \Omega_g$, where $\Omega_g$
is covered by the image of a smooth map $\phi\co W\to M$ and all components of $W$ have dimension at most $v-2$.  If $v<n$ we additionally assume that $\overline{g(V)}\cap Crit(f)=\varnothing$.  If $v = n$ we instead additionally assume that $Crit(f)\cap (g(\partial V)\cup \Omega_g)=\varnothing$, and that every point in $Crit(f)$ is a regular value of $g$.

Writing $\partial g=g|_{\partial V}\co \partial V\to M$, we will assume from now on that the Morse--Smale metric $h$ being used to define the gradient flow of $f$ belongs to the intersection of the residual sets obtained by applying Proposition \ref{genmet} successively (in each instance with $k=1$) to $g|_{g^{-1}(M\setminus Crit(f))}$, to $\partial g$, and to $\phi$.

This being the case, for all $p,q\in Crit(f)$ the fiber products $V{}_{g}\times_{e_{pq}}\tilde{\mathcal{M}}(p,q;f)$, $\partial V{}_{g}\times_{e_{pq}}\tilde{\mathcal{M}}(p,q;f)$, and $W{}_{\phi}\times_{e_{pq}}\tilde{\mathcal{M}}(p,q;f)$ will all be cut out transversely,\footnote{In the case that $v=n$ the transversality of fiber products of the form $V{}_{g}\times_{e_{pp}}\tilde{\mathcal{M}}(p,p;f)$ follows from the assumption that the critical points of $f$ are all regular values for $g$} where $e_{pq}\co \tilde{\mathcal{M}}(p,q;f)\to M$ is the canonical embedding (if elements of $\tilde{\mathcal{M}}(p,q;f)$ are thought of as gradient flow trajectories $\gamma$ then $e_{pq}(\gamma)=\gamma(0)$).  In particular, in the case that $|p|_{f}-|q|_{f}=n-v$, the latter two fiber products will be empty, and  $V{}_{g}\times_{e_{pq}}\tilde{\mathcal{M}}(p,q;f)$ will be an oriented zero-manifold.  Moreover this oriented zero-manifold will be compact: to see this, recall that the images under $e_{pq}$ of a divergent sequence in $\tilde{\mathcal{M}}(p,q;f)$ will, after passing to a subsequence, converge to an element of some $e_{rs}(\tilde{\mathcal{M}}(r,s;f))$ where $|r|_f-|s|_f<|p|_f-|q|_f$, and use the fact that 
$V{}_{g}\times_{e_{rs}}\tilde{\mathcal{M}}(r,s;f)$, $\partial V{}_{\partial g}\times_{e_{rs}}\tilde{\mathcal{M}}(r,s;f)$, and $W{}_{\phi}\times_{e_{rs}}\tilde{\mathcal{M}}(r,s;f)$ are all cut out transversely and hence are empty by dimension considerations.  Consequently we have a well defined $\mathbb{K}$-valued signed count of elements $\#_{\mathbb{K}}\left(V{}_{g}\times_{e_{pq}}\tilde{\mathcal{M}}(p,q;f)\right)$ whenever $|p|_f-|q|_f=n-v$.

Accordingly, given $g\co V\to M$ as above (and a suitable Morse--Smale metric) we define a map $I_g\co CM_{*}(f;\mathbb{K})\to CM_{*}(f;\mathbb{K})$ as a direct sum of maps \[ I_g\co CM_{k}(f;\mathbb{K})\to CM_{k-(n-v)}(f;\mathbb{K}) \] obtained by extending linearly from the formula \[ I_g(p)=\sum_{{\scriptsize{\begin{array}{c} q\in Crit(f):\\|q|_f=k-(n-v)\end{array}}}}\#_{\mathbb{K}}\left(V{}_{g}\times_{e_{pq}}\tilde{\mathcal{M}}(p,q;f)\right)q\]

$I_g(x)$ might be thought of as a chain-level version of the cap product of $x\in CM_*(f;\mathbb{K})$ with the pseudochain $g\co V\to M$.

Evidently we have an identically-defined map (using the same metric $h$) $I_g\co CM_{k}(-f;\mathbb{K})\to CM_{k-(n-v)}(-f;\mathbb{K})$.

\begin{prop} \label{igprop}  The maps $I_g\co CM_{*}(\pm f;\mathbb{K})\to CM_{*-(n-v)}(\pm f;\mathbb{K})$ enjoy the following properties:
\begin{itemize} \item[(i)] For $x\in CM_{2n-k-v}(-f;\mathbb{K})$ and $y\in CM_{k}(f;\mathbb{K})$, \[ \Pi\left(I_g(x),y\right)=(-1)^{(n-v)(n-k)}\Pi\left(x,I_g(y)\right).\] 
\item[(ii)]  Assuming that $\partial g\co \partial V\to M$ is also a pseudochain, so that $I_{\partial g}$ is defined, \[ I_{\partial g}-d_fI_g+(-1)^{n-v}I_gd_f=0.\]
\end{itemize}
\end{prop}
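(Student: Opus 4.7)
\medskip

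\noindent\textbf{Proof plan for Proposition \ref{igprop}.}

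For part (i), my plan is to reduce to a statement about oriented moduli spaces on generators, so take $p\in Crit(f)$ with $|p|_f = k$ and $q\in Crit(f)$ with $|q|_f = k-(n-v)$ (so $|p|_{-f}=n-k$ and $|q|_{-f}=2n-k-v$). By definition, the coefficient of $p$ in $I_g(q)\in CM_*(-f;\K)$ is $\#_\K\bigl(V\,{}_g\times_{e_{qp}^{-f}}\tilde{\mathcal{M}}(q,p;-f)\bigr)$, while the coefficient of $q$ in $I_g(p)\in CM_*(f;\K)$ is $\#_\K\bigl(V\,{}_g\times_{e_{pq}^{f}}\tilde{\mathcal{M}}(p,q;f)\bigr)$. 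The time-reversal $\gamma\mapsto\bar\gamma$, $\bar\gamma(t)=\gamma(-t)$, identifies $\tilde{\mathcal{M}}(q,p;-f)$ with $\tilde{\mathcal{M}}(p,q;f)$ as manifolds and intertwines their $t=0$ evaluation maps; by (\ref{tmqp}) this identification twists the orientation by $(-1)^{(|p|_f+|q|_f)(n-|p|_f)}$. Since $|p|_f+|q|_f\equiv n-v\pmod 2$ and $n-|p|_f=n-k$, the sign is exactly $(-1)^{(n-v)(n-k)}$, which is what is claimed.

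For part (ii), I would fix $p,q\in Crit(f)$ with $|p|_f-|q|_f=n-v+1$ and consider the space
\[
\overline{\mathcal N}(p,q)\ :=\ V\,{}_g\times_{E_1}\tilde{\bar{\mathcal M}}(p,q;f),
\]
which, thanks to our genericity hypothesis on $h$ coming from Proposition \ref{genmet}, is an oriented manifold-with-boundary of dimension $v+(n-v+1)-n=1$. Compactness is where the pseudochain hypothesis on $g$ is used: the only ways a sequence in $V\,{}_g\times\tilde{\mathcal M}(p,q;f)$ can escape are via breaking of trajectories or by having $g$-coordinate approach $\Omega_g$; by dimension, the fiber products $W\,{}_\phi\times_{e_{rs}^f}\tilde{\mathcal M}(r,s;f)$ (with $\phi\co W\to M$ covering $\Omega_g$) and the multiply-broken fiber products are all cut transversely of negative dimension, hence empty. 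So the only contributions to $\partial\overline{\mathcal N}(p,q)$ come from boundary of $V$ and from single breakings.

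The identification of the oriented boundary is then an application of (\ref{bdryid}) with $\dim M=n$, $\dim V=v$:
\[
\partial\overline{\mathcal N}(p,q)\ =\ \bigl(\partial V\,{}_g\times_{E_1}\tilde{\mathcal M}(p,q;f)\bigr)\,\coprod\,(-1)^{n-v}\bigl(V\,{}_g\times_{E_1}\partial\tilde{\bar{\mathcal M}}(p,q;f)\bigr),
\]
expanding the right-hand factor via the formula for $\partial\tilde{\bar{\mathcal M}}(p,q;f)$ in (\ref{delws}), and commuting the zero-dimensional $\mathcal M(p,r;f)$ or $\mathcal M(r,q;f)$ past the fiber product (the resulting product-fiber-product signs from (\ref{prodid}) are trivial since one factor is a point). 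Reading off coefficients of $q$, the first piece gives $I_{\partial g}$, the piece with $|r|_f=|p|_f-1$ gives $I_g d_f$ (with sign $(-1)^{n-v}$), and the piece with $|r|_f=|q|_f+1$ gives $d_f I_g$ (with sign $(-1)^{n-v}\cdot(-1)^{|p|_f+|q|_f}$). Since $|p|_f+|q|_f\equiv n-v+1\pmod 2$, the overall sign on the $d_fI_g$ term is $-1$, and setting the total signed boundary count to zero gives exactly $I_{\partial g}-d_fI_g+(-1)^{n-v}I_gd_f=0$.

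The main obstacle I anticipate is the careful bookkeeping of fiber-product orientation signs (including verifying that the identification used in (i) really does preserve the evaluation maps with the signs I claimed, and that no hidden signs enter when commuting $\mathcal{M}(p,r;f)$ past the fiber product with $V$); these are routine with the conventions in Section \ref{or} but easy to slip on, and the model for this type of computation is carried out in detail in Section \ref{app} for Proposition \ref{fundid}.
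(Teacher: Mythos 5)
Your proposal is correct and follows essentially the same route as the paper: part (i) is proved by reducing to generators and invoking the orientation comparison (\ref{tmqp}), and part (ii) is proved by identifying the oriented boundary of the one-dimensional fiber product $V\,{}_g\times_{\bar{E}_0}\tilde{\bar{\mathcal{M}}}(p,q;f)$ via (\ref{bdryid}) and (\ref{delws}), after arguing compactness from the pseudochain hypothesis and the genericity of the metric. The sign bookkeeping you lay out matches the paper's exactly (up to the minor notational slip of writing $E_1$ for what the paper calls $\bar{E}_0$).
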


\begin{proof} Since $\Pi$ is bilinear it suffices to check the equation in (i) when $x=q$ for some critical point $q$ with $|q|_f=k-(n-v)$ (so that $|q|_{-f}=2n-k-v$) and $y=p$ for some critical point $p$ with $|p|_f=k$.  By definition we have \[ \Pi\left(q,I_g(p)\right)=\#_{\mathbb{K}}\left(V{}_{g}\times_{e_{pq}}\tilde{\mathcal{M}}(p,q;f)\right) \] and \[ \Pi\left(I_g(q),p\right)=
\#_{\mathbb{K}}\left(V{}_{g}\times_{e_{pq}}\tilde{\mathcal{M}}(q,p;-f)\right) \]

In view of (\ref{tmqp}), these differ from each other by a factor $(-1)^{(|p|_f+|q|_f)(n-|p|_f)}=(-1)^{(n-v)(n-k)}$, proving (i).

(ii) is proven by examining the boundary of the one-manifolds $V{}_{g}\times_{\bar{E}_0}\tilde{\bar{\mathcal{M}}}(p,q;f)$ where $|p|_f-|q|_f=n-v+1$.  Recall here that $\tilde{\bar{\mathcal{M}}}(p,q;f)$ is a partial compactification of $\tilde{\mathcal{M}}(p,q;f)$, with oriented boundary given by \[   \partial \tilde{\bar{\mathcal{M}}}(p,q;f)= \left(\coprod_{|r|_{f}=|p|_f-1}\mathcal{M}(p,r;f)\times\tilde{\mathcal{M}}(r,q;f)\right)\sqcup (-1)^{|p|_f+|q|_f} 
\left(\coprod_{|r|_{f}=|q|_f+1}\tilde{\mathcal{M}}(p,r;f)\times\mathcal{M}(r,q;f)\right),\] and the characteristic map $\bar{E}_{0}\co \tilde{\bar{\mathcal{M}}}(p,q;f)\to M$ is equal to $e_{pq}$ on the interior $\tilde{\mathcal{M}}(p,q;f)$ and to the canonical embeddings of $\tilde{\mathcal{M}}(r,q;f)$ and  $\tilde{\mathcal{M}}(p,r;f)$ on
$\mathcal{M}(p,r;f)\times\tilde{\mathcal{M}}(r,q;f)$ and $\tilde{\mathcal{M}}(p,r;f)\times\mathcal{M}(r,q;f)$, respectively.

Now the $\Omega$-limit set $\Omega_{\bar{E}_0}$ of $\bar{E}_0$ is contained in spaces of the form $\tilde{\mathcal{M}}(r,s;f)$ of dimension at most $|p|_f-|q|_f-2=n-v-1$, and so is disjoint from $\overline{g(V)}$ by our transversality assumptions on the metric $h$.  For similar dimensional reasons, $\Omega_g$ is disjoint from $\overline{\bar{E}_0\left(\tilde{\bar{\mathcal{M}}}(p,q;f)\right)}$, and also $g(\partial V)\cap \bar{E}_0\left(\partial\tilde{\bar{\mathcal{M}}}(p,q;f)\right)=\varnothing$.  Therefore $V{}_{g}\times_{\bar{E}_0}\tilde{\bar{\mathcal{M}}}(p,q;f)$ is a compact oriented one-manifold with boundary (and no corners, since the fiber product of the boundaries is empty); according to (\ref{bdryid}) the oriented boundary is given by \begin{equation}\label{capbdry} \left((\partial V){}_{\partial g}\times_{e_{pq}}\tilde{\mathcal{M}}(p,q;f)\right)\coprod (-1)^{n-v}\left(V{}_{g}\times_{\bar{E}_0}\partial \tilde{\bar{\mathcal{M}}}(p,q;f) \right) \end{equation}  Of course, since (\ref{capbdry}) is the boundary of a compact oriented one-manifold, its signed number of points must be zero.  The signed number of points (counted in $\mathbb{K}$) in $(\partial V){}_{\partial g}\times_{e_{pq}}\tilde{\mathcal{M}}(p,q;f)$ is $\Pi(q,I_{\partial g}p)$.  As for the other set appearing in (\ref{capbdry}), we have, freely using properties of fiber product orientations from Section \ref{or:fp},
\begin{align*} V{}_{g}\times_{\bar{E}_0}\partial \tilde{\bar{\mathcal{M}}}(p,q;f)=&\left(\coprod_{|r|_f=|p|_f-1}\left(V{}_{g}\times_{\bar{E}_0}\tilde{\mathcal{M}}(r,q;f)\right)\times \mathcal{M}(p,r;f)\right)\\& \sqcup (-1)^{|p|_f+|q|_f} \left(\coprod_{|r|_f=|q|_f+1}\left(V{}_{g}\times_{\bar{E}_0}\tilde{\mathcal{M}}(p,r;f)\right)\times \mathcal{M}(r,q;f)\right)
\end{align*}

The signed number of points in the first of the two large unions above is easily seen to be $\Pi\left(q,I_g (d_fp)\right)$, while the signed number of points in the second large union (ignoring the sign $(-1)^{|p|_f+|q|_f}$) is $\Pi\left(q,d_fI_g(p)\right)$.  So since in this case $(-1)^{|p|_f+|q|_f}=(-1)^{n-v+1}$, setting the signed number of points in  (\ref{capbdry}) equal to zero gives \begin{align*} 0&=\Pi(q,I_{\partial g}p)+(-1)^{n-v}\left(\Pi\left(q,I_g (d_fp)\right)+(-1)^{n-v+1}\Pi\left(q,d_fI_g(p)\right)\right) 
\\&=\Pi\left(q,\left(I_{\partial g}-d_f I_g+(-1)^{n-v}I_g d_f\right)p\right).\end{align*}  Since this equation holds for all $p,q\in Crit(f)$ of the appropriate indices, we have proven (ii).
\end{proof}

We also mention the following somewhat trivial proposition, which we will appeal to later.  Recall the canonical cycle $M_f=\sum_{|p|_f=n}p\in CM_n(f;\mathbb{K})$ from (\ref{mdef}); similarly we have a canonical cycle $M_{-f}=\sum_{|q|_f=0}q\in CM_{n}(-f;\mathbb{K})$.

\begin{prop}\label{piint} Let $V$ be a compact oriented zero-manifold, let $g\co V\to M$ be a map such that $g(V)\cap Crit(f)=\varnothing$, and assume that the metric $h$ belongs to the residual set of Proposition \ref{genmet} applied with $k=1$ to the map $g$, so that $I_g\co CM_{*}(f;\mathbb{K})\to CM_{*-n}(f;\mathbb{K})$ is defined.  Then the signed number of points in $V$ is given by \[ \#_{\mathbb{K}}(V)=\Pi\left(M_{-f},I_gM_f\right).\]
\end{prop}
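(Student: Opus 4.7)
The plan is to unwind the definitions and observe that the only critical-point pairs $(p,q)$ that can contribute to $I_gM_f$ are those with $|p|_f=n$ and $|q|_f=0$, and that for such pairs the relevant fiber product simply records, with signs, which pair of limiting critical points each point of $g(V)$ flows to.

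First I would exploit the dimension of $V$ together with the transversality afforded by choosing $h$ in the residual set of Proposition \ref{genmet} applied with $k=1$ to $g$: each fiber product $V\,_g\times_{e_{pq}}\tilde{\mathcal{M}}(p,q;f)$ is then cut out transversely of expected dimension $|p|_f-|q|_f-n$. Since $0\leq|p|_f,|q|_f\leq n$, nonnegativity forces $|p|_f=n$ and $|q|_f=0$; all other fiber products are empty. Substituting into the definitions of $I_g$, $M_{\pm f}$, and $\Pi$ then immediately gives
\[
\Pi(M_{-f},I_g M_f)=\sum_{|p|_f=n,\,|q|_f=0}\#_{\K}\bigl(V\,_g\times_{e_{pq}}\tilde{\mathcal{M}}(p,q;f)\bigr).
\]

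Next, for such a top-to-bottom pair $(p,q)$, the space $\tilde{\mathcal{M}}(p,q;f)=W^u_f(p)\,_{i_{u,p}}\times_{i_{s,q}}W^s_f(q)$ is naturally identified with the open subset $W^u_f(p)\cap W^s_f(q)\subset M$, and under this identification $e_{pq}$ becomes the inclusion. Since $M\setminus Crit(f)$ decomposes as the disjoint union $\bigsqcup_{p',q'}(W^u_f(p')\cap W^s_f(q'))$ indexed by pairs of critical points, and since the fiber products for all other $(p',q')$ are empty by the previous paragraph, each $v\in V$ (which by hypothesis lies in $M\setminus Crit(f)$) contributes to exactly one summand above.

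The only real work is the sign check, which I would perform in three short steps directly from the conventions of Section \ref{or}. (a) For $|q|_f=0$, the identity $\iota(i_{s,q},i_{u,q})=1$ together with $W^u_f(q)=\{q\}$ being positively oriented forces $W^s_f(q)$ to carry the ambient $M$-orientation; combined with the convention that $W^u_f(p)$ for $|p|_f=n$ already carries the $M$-orientation, this presents $\tilde{\mathcal{M}}(p,q;f)$ as a fiber product of two open subsets of $M$, each $M$-oriented. (b) A short computation with the $\ker h$ recipe in Section \ref{or:fp} confirms that such a fiber product inherits the $M$-orientation on $W^u_f(p)\cap W^s_f(q)$. (c) Finally, the ``$V\,_g\times_{1_M}M\cong V$ as oriented manifolds'' principle recorded after (\ref{associd}) identifies $V\,_g\times_{e_{pq}}\tilde{\mathcal{M}}(p,q;f)$ with $g^{-1}(W^u_f(p)\cap W^s_f(q))\subset V$ carrying precisely the signs $\epsilon(v)$ from $V$. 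Summing over $(p,q)$ then collapses the displayed sum to $\sum_{v\in V}\epsilon(v)=\#_{\K}(V)$. The main (and only) obstacle is the orientation bookkeeping in (a)--(c); everything else is a tautology once the transversality is in hand.
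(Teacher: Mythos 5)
Your proposal is correct and follows essentially the same route as the paper's proof: the dimension count via transversality forces $|p|_f=n$, $|q|_f=0$; the trajectory spaces for such pairs embed via $e_{pq}$ as disjoint $M$-oriented open subsets of $M$ covering $g(V)$; and the identification $V\,_g\times_{1_M}M\cong V$ collapses the sum to $\#_{\K}(V)$. The paper phrases the orientation step more compactly (simply asserting the orientation convention for index-$n$ and index-$0$ critical points makes each $\tilde{\mathcal{M}}(p,q;f)$ orientation-compatible with $M$), but the content of your (a)--(c) is exactly what underlies that assertion.
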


\begin{proof} Since $\dim V=0$, by dimensional considerations our assumption on $h$ amounts to the statement that $V{}_{g}\times_{e_{pq}} \tilde{\mathcal{M}}(p,q;f)=\varnothing$ unless $|p|_f=n$ and $|q|_f=0$.  Now as $p$ varies through index-$n$ critical points of $f$ and $q$ varies through index-$0$ critical points, the $\tilde{\mathcal{M}}(p,q;f)$ are sent by their canonical embeddings $e_{pq}$ to disjoint open subsets of $M$, and our orientation prescription for the unstable manifolds of index-$n$ and index-$0$ critical points ensures that the orientation of each such $\tilde{\mathcal{M}}(p,q;f)$ coincides with its orientation as an open subset of $M$.  As a result, we obtain \begin{align*}
\#_{\mathbb{K}}(V)&=\#_{\mathbb{K}}\left(V{}_g\times_{1_M}M\right)=\sum_{|q|_f=0}\sum_{|p|_f=n}\#_{\mathbb{K}}\left(V{}_{g}\times_{e_{pq}}
\tilde{\mathcal{M}}(p,q;f)\right) \\&=\sum_{|q|_f=0}\sum_{|p|_f=n}\Pi\left(q,I_g p\right)=\Pi\left(\sum_{|q|_f=0}q,\sum_{|p|_f=n}I_gp\right)
=\Pi\left(M_{-f},I_gM_f\right),
\end{align*} as desired. \end{proof}
 
\subsection{Gradient trajectories passing through two chains}

Having defined the chain-level cap product by using Proposition \ref{genmet} with $k=1$, we now set about defining new operations on the Morse chain complex by means of the $k=2$ version of Proposition \ref{genmet}.  This will require us to understand the boundaries of (compactifications of) moduli spaces of the form $(V_0\times V_1){}_{g_0\times g_1}\times_{E_1}\left(\tilde{\mathcal{M}}(p,q;f)\times (0,\infty)\right)$, where the map $E_1\co  \tilde{\mathcal{M}}(p,q;f)\times (0,\infty)\to M\times M$ is defined by \[ E_1(\gamma,t)=(\gamma(0),\gamma(t)).\]  

\begin{lemma}\label{mark2} Assume that the negative gradient flow of the exhausting Morse function $f\co M\to\R$ on a smooth oriented manifold $M$ is Morse--Smale and that $p,q\in Crit(f)$ are distinct. There is a pseudochain $\bar{E}_1\co \overline{\tilde{\mathcal{M}}(p,q;f)\times(0,\infty)}\to M\times M$, where $\overline{\tilde{\mathcal{M}}(p,q;f)\times(0,\infty)}$ is an oriented manifold with boundary whose interior is $\tilde{\mathcal{M}}(p,q;f)\times(0,\infty)$ and whose oriented boundary is given by \[ (-1)^{|p|_f-|q|_f}\partial \overline{\tilde{\mathcal{M}}(p,q;f)\times(0,\infty)}=\left((-1)^{|p|_f-|q|_f}C_1\right)\sqcup  C_2 \sqcup  C_3 \sqcup (-C_4)\sqcup C_5\sqcup C_6  \] 
where \begin{flalign*} \quad C_1&=\coprod_{|r|_f=|p|_f-1} \mathcal{M}(p,r;f)\times\tilde{\mathcal{M}}(r,q;f)\times (0,\infty)& &\bar{E}_1|_{C_1}([\gamma_1],\gamma_2,t)=(\gamma_2(0),\gamma_2(t)), \quad\\ \quad
C_2&=\coprod_{|r|_f=|q|_f+1}\tilde{\mathcal{M}}(p,r;f)\times\mathcal{M}(r,q;f)\times(0,\infty)& &\bar{E}_1|_{C_2}(\gamma_1,[\gamma_2],t)=(\gamma_1(0),\gamma_1(t)),\quad \\ \quad
C_3&=\coprod_{|q|_f<|r|_f<|p|_f}\tilde{\mathcal{M}}(p,r;f)\times\tilde{\mathcal{M}}(r,q;f)& &\bar{E}_1|_{C_3}(\gamma_1,\gamma_2)=(\gamma_1(0),\gamma_2(0)),  \quad \\
\quad C_4&=\tilde{\mathcal{M}}(p,q;f)& &\bar{E}_1|_{C_4}(\gamma)=(\gamma(0),\gamma(0)), \quad\\ \quad
C_5&=\tilde{\mathcal{M}}(p,q;f)& &\bar{E}_1|_{C_5}(\gamma)=(p,\gamma(0)), \quad \\ \quad
C_6&=\tilde{\mathcal{M}}(p,q;f)& &\bar{E}_1|_{C_6}(\gamma)=(\gamma(0),q).\quad\end{flalign*}  Moreover the $\Omega$-limit set $\Omega_{\bar{E}_1}$  is contained in the union of sets of the following form: \begin{itemize} \item[(i)]  Images of maps $E_1\co  \tilde{\mathcal{M}}(a,b;f)\times (0,\infty)\to M\times M$ \mbox{ with } $|a|_f-|b|_f\leq |p|_f-|q|_f-2$
\item[(ii)]  $e_{a,b}\left(\tilde{\mathcal{M}}(a,b;f)\right)\times e_{c,d}\left(\tilde{\mathcal{M}}(c,d;f)\right)$\mbox{ with }$(|a|_f-|b|_f)+(|c|_f-|d|_f)\leq |p|_f-|q|_f-1$ 
\item[(iii)] $\delta(e_0(\tilde{\mathcal{M}}(a,b;f)))$ where $\delta\co M\to M\times M$ is the diagonal embedding and $|a|_f-|b|_f\leq |p|_f-|q|_f-1$.
\item[(iv)] $\left(e_0(\tilde{\mathcal{M}}(a,b;f))\times Crit(f)\right)\cup\left(Crit(f)\times e_0(\tilde{\mathcal{M}}(a,b;f))\right)$ with
$|a|_f-|b|_f\leq |p|_f-|q|_f-1$.
\end{itemize}
\end{lemma}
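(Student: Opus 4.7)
The plan is to identify $\overline{\tilde{\mathcal{M}}(p,q;f)\times(0,\infty)}$ with the codimension-at-most-one stratum of a product of two manifolds with corners, and then to match strata against the $C_i$ and verify the orientations. I begin with the identification
\[ \tilde{\mathcal{M}}(p,q;f)\times(0,\infty)\;\cong\;\mathcal{M}(p,q;f)\times\{(u,v)\in\mathbb{R}^{2}:u<v\} \]
coming from the fact that $\tilde{\mathcal{M}}(p,q;f)$ is a principal $\mathbb{R}$-bundle over $\mathcal{M}(p,q;f)$; under this identification the evaluation map becomes $([\gamma],u,v)\mapsto(\gamma(u),\gamma(v))$ for a chosen representative of the orbit $[\gamma]$. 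The first factor is compactified to the broken-trajectory compactification $\bar{\mathcal{M}}(p,q;f)$, which under the locally trivial metric hypothesis of \cite{BH} is a smooth manifold with corners whose codimension-one boundary strata are products $\mathcal{M}(p,r;f)\times\mathcal{M}(r,q;f)$ over intermediate critical points $r$. The second factor is compactified inside $[-\infty,\infty]^{2}$, with codimension-one boundary $\{u=v\}\cup\{u=-\infty\}\cup\{v=+\infty\}$. The space $\overline{\tilde{\mathcal{M}}(p,q;f)\times(0,\infty)}$ is then obtained by retaining only the codimension-at-most-one part of the resulting product, and $\bar{E}_{1}$ is the natural extension of $E_{1}$ to this manifold with boundary.

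Reading off the boundary strata is then direct: $C_{4}$ comes from $u=v$ with $[\gamma]$ unbroken (both marked points coincide, landing on the diagonal); $C_{5}$ from $u=-\infty$ with $[\gamma]$ unbroken (the first marked point limits to $p$, and the residual parameter $v\in\mathbb{R}$ reparametrizes $[\gamma]$ to yield the factor $\tilde{\mathcal{M}}(p,q;f)$); $C_{6}$ from $v=+\infty$ with $[\gamma]$ unbroken, symmetrically; and $C_{1},C_{2},C_{3}$ from a single breaking of $[\gamma]$ at an intermediate critical point $r$, distinguished by whether the pair $(u,v)$ lies entirely on the $r$-to-$q$ piece, entirely on the $p$-to-$r$ piece, or straddles the break (in which case $v-u\to+\infty$ in the limit, explaining why the extra $(0,\infty)$ factor appears in $C_{1}$ and $C_{2}$ but not in $C_{3}$). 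In each broken stratum the piece containing the marked point(s) keeps its parametrization and contributes a $\tilde{\mathcal{M}}$ factor, while the other piece is modded out by $\mathbb{R}$ and contributes an $\mathcal{M}$ factor, matching the stated formulas. The $\Omega$-limit set $\Omega_{\bar{E}_{1}}$ is then contained in the image in $M\times M$ of the codimension-at-least-two strata of the product compactification, and a dimension count shows that any such stratum maps into one of the four types (i)--(iv): respectively one marked point on an at-least-twice-broken trajectory, two marked points on distinct pieces of a trajectory broken at least twice, the diagonal case combined with further breaking, and the $u=-\infty$ or $v=+\infty$ case combined with further breaking.

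The main obstacle is the orientation bookkeeping. Using the outward-normal-first convention from Section \ref{or:bound}, the fiber-product conventions from Section \ref{or:fp}, and the $\mathbb{R}$-quotient convention from Section \ref{or:actions}, each $C_{i}$ inherits a product orientation as a boundary stratum of the compactification. Comparing the natural orientation on $\tilde{\mathcal{M}}(p,q;f)\times(0,\infty)$ with the product orientation on $\mathcal{M}(p,q;f)\times\{u<v\}$ requires moving the $\mathbb{R}$-reparametrization factor past the $(0,\infty)$ factor, producing the overall prefactor $(-1)^{|p|_{f}-|q|_{f}}$ in the stated boundary formula; the differing signs on $C_{1}$ versus $C_{2}$ parallel the split of signs in equation (\ref{delws}) for $\partial\tilde{\bar{\mathcal{M}}}(p,q;f)$, while the sign $-1$ on $C_{4}$ reflects that the outward conormal to $\{u=v\}$ in $\{u\leq v\}$ points in the direction of decreasing $v-u$. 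I would carry out the detailed sign verification by repeated application of (\ref{bdryid}), (\ref{commutid}), and (\ref{prodid}) in parallel with the analogous boundary analysis deferred to Section \ref{app} in the proof of Proposition \ref{fundid}; the topological stratification and the form of $\Omega_{\bar{E}_{1}}$ are routine, but matching every orientation exactly as stated requires this careful direct computation.
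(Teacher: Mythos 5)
Your approach --- compactify $\tilde{\mathcal{M}}(p,q;f)\times(0,\infty)\cong\mathcal{M}(p,q;f)\times\{u<v\}$ factor by factor and read the boundary off the product --- is not the paper's, which instead explicitly builds collars $\psi_1,\ldots,\psi_6$ for the six boundary pieces from Schwarz's gluing map $\#$ and the reparametrization maps $\sigma_s$, and there is a genuine gap in the product route that the collar construction is designed to avoid.

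The problem is that the codimension-one stratification of the product compactification does \emph{not} contain all of $C_1,C_2,C_3$. Fix the regular value $a$ used to identify $\tilde{\mathcal{M}}(p,q;f)\cong\mathcal{M}(p,q;f)\times\R$. For a once-broken stratum $\mathcal{M}(p,r;f)\times\mathcal{M}(r,q;f)$ of $\partial\bar{\mathcal{M}}(p,q;f)$, the product with the open region $\{u<v\}$ is a \emph{single} codimension-one stratum, and which of your three cases it realizes is \emph{determined} by whether $a>f(r)$ or $a<f(r)$: if $a>f(r)$, a sequence $\gamma_n$ normalized by $f(\gamma_n(0))=a$ with bounded marked times has both marked points accumulating on the $p$-to-$r$ piece, so this stratum can only contribute to $C_2$; if $a<f(r)$, only to $C_1$. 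The stratum $C_3$ (marked points straddling the break) never appears as a codimension-one stratum of the product: sequences landing there have $v_n-u_n\to\infty$ at a rate coupled to the breaking, so in the product they converge into the corner $\{u\leq v\}\cap\{v=+\infty\}$ or into $\{(-\infty,-\infty)\}$-type corners, which you discard. Thus retaining only the codimension-at-most-one part of the product yields a manifold with boundary that is missing, for every intermediate $r$, two of the three strata $C_1,C_2,C_3$, and the evaluation map on the retained boundary does not extend to the discarded sequences continuously. The paper's proof addresses exactly this by \emph{separately} gluing in collars built from $\gamma\#_\rho\eta$ with three different reparametrizations, $\sigma_{s+\rho}$ ($\psi_1$, marked points low), $\sigma_{s-\rho}$ ($\psi_2$, marked points high), and the straddling $\psi_3$ with $T=2\rho^{-1}-s+u$, so that all three families of limits become genuine boundary points.

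A second, smaller issue: you invoke the smooth manifold-with-corners structure of $\bar{\mathcal{M}}(p,q;f)$ from \cite{BH}, which requires the locally trivial metric hypothesis. Lemma \ref{mark2} is stated, and is used (in the proof of Proposition \ref{fundid} via the metrics of Proposition \ref{genmet}), without that hypothesis; the paper's collar construction relies only on Schwarz's gluing and compactness, which hold for any Morse--Smale metric. The orientation remarks at the end of your proposal are reasonable in spirit, but they cannot be completed until the stratification itself is repaired.
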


\begin{proof}
See Section \ref{app}.
\end{proof}

We assume again that $M$ is compact and fix a Morse function $f\co M\to\R$.

Now suppose that we have two pseudochains $g_0\co V_0\to M$ and $g_1\co V_1\to M$; for $i=1,2$ write  $v_i=\dim V_i$, $\partial g_i=g_i|_{\partial V_i}$, and let $\phi_i\co W_i\to M$ be a smooth map from a manifold whose components all have dimension at most $v_i-2$ such that $\Omega_{g_i}\subset \phi_i(W_i)$.  Furthermore we assume the following:
\begin{itemize} \item[(A)] $\overline{g_0(V_0)}\cap \Omega_{g_1}=\overline{g_1(V_1)}\cap \Omega_{g_0}=\varnothing$.
\item[(B)] The fiber products $V_{0}\,{}_{g_0}\times_{g_1}V_1$, $(\partial V_{0}){}_{\partial g_0}\times_{g_1}V_1$, $V_{0}\,{}_{g_0}\times_{\partial g_1}(\partial V_1)$, and $(\partial V_{0}){}_{\partial g_0}\times_{\partial g_1}(\partial V_1)$ are all cut out transversely.
\item[(C)] $\overline{g_0(V_0)}\cap Crit(f)=\overline{g_1(V_1)}\cap Crit(f)=\varnothing$.
\end{itemize}

Also, as a general point of notation, if $\alpha\co A\to M$ and $\beta\co B\to M$ are smooth maps such that $A{}_{\alpha}\times_{\beta} B$ is cut out transversely, we will write $\alpha\times_M\beta$ for the map from $A{}_{\alpha}\times_{\beta} B$ to $M$ defined by $(\alpha\times_M\beta)(a,b)=\alpha(a)=\beta(b)$.

\begin{definition}\label{genwrt} Where $f,g_0,g_1,\phi_0,\phi_1$ are as above, a Riemannian metric $h$ on $M$ will be said to be \emph{generic with respect to }$f,g_0,g_1$ provided that it belongs to the residual sets given by Proposition \ref{genmet} applied with: 
\begin{itemize} \item $k=1$, to each of the functions \[ g_0,\,g_1,\,\partial g_0,\,\partial g_1,\,\phi_0,\,\phi_1,\, g_0\times_M g_1,\,    g_0\times_M\partial g_1,\,\partial g_0\times_M g_1,\partial g_0\times_M\partial g_1\]
\item $k=2$, to each of the pairs of functions \[ (g_0,g_1),\,(g_0,\partial g_1),\,(g_0,\phi_1),\,(\partial g_0,g_1),\,(\partial g_0,\partial g_1),\,(\partial g_0,\phi_1),\,(\phi_0,g_1),\,(\phi_0,\partial g_1),\,(\phi_0,\phi_1).\]
\end{itemize}
\end{definition}

Choose a Riemannian metric which is generic with respect to $f,g_0,g_1$ and let $p,q\in Crit(f)$ with $v_0+v_1+|p|_f-|q|_f+1=2n$.  Then the fiber product \[ (V_0\times V_1){}_{g_0\times g_1}\times_{\bar{E}_1}\overline{\tilde{\mathcal{M}}(p,q;f)\times (0,\infty)} \] is cut out transversely and by dimension considerations is an oriented zero-manifold.  Moreover the characterization of $\Omega_{\bar{E}_1}$ in Lemma \ref{mark2}, the assumption on the indices of $p$ and $q$, and the genericity assumption on $h$ ensure that any hypothetical divergent sequence in $(V_0\times V_1){}_{g_0\times g_1}\times_{\bar{E}_1}\overline{\tilde{\mathcal{M}}(p,q;f)\times(0,\infty)} $ would have a subsequence whose image under $(g_0\times g_1)\times_M\bar{E}_1$ converging to a point in a transversely-cut-out fiber product which has negative dimension and so is empty.  Thus $(V_0\times V_1){}_{g_0\times g_1}\times_{\bar{E}_1}\overline{\tilde{\mathcal{M}}(p,q;f)\times(0,\infty)}$ is compact and we may define \[ I_{g_0,g_1}(p)=\sum_{{\scriptsize{\begin{array}{c} q\in Crit(f):\\|q|_f=|p|_f+1-(2n-v_1-v_2)\end{array}}}}\#_{\mathbb{K}}\left((V_0\times V_1){}_{g_0\times g_1}\times_{\bar{E}_1}\overline{\tilde{\mathcal{M}}(p,q;f)\times(0,\infty)} \right)q \]  Extending this linearly gives us a map \[ I_{g_0,g_1}\co CM_{k}(f;\mathbb{K})\to CM_{k+1-(2n-v_1-v_2)}(f;\mathbb{K}).\]

\begin{prop} \label{fundid} The maps $I_{g_0},I_{g_1}$, and $I_{g_0,g_1}$ obey the following identity: \[ I_{\partial g_0,g_1}+(-1)^{v_0}I_{g_0,\partial g_1}+(-1)^{v_0+v_1}I_{g_0,g_1}d_f+d_f I_{g_0,g_1}+(-1)^{v_0(n-v_1)}I_{g_1}I_{g_0}+(-1)^{1+n(n-v_1)}I_{g_0\times_M g_1}=0\]
\end{prop}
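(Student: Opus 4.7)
The plan is to fix critical points $p,q$ with $|p|_f - |q|_f = 2n - v_0 - v_1$ (one more than the index difference used to define $I_{g_0,g_1}$) and apply the standard ``boundary of a compact oriented one-manifold has signed count zero'' principle to the moduli space
\[ \mathcal{N}(p,q) = (V_0\times V_1)\,{}_{g_0\times g_1}\times_{\bar{E}_1}\overline{\tilde{\mathcal{M}}(p,q;f)\times(0,\infty)}. \]
The genericity of $h$ with respect to $f,g_0,g_1$ guarantees that this fiber product (and all the fiber products arising over the various boundary strata below) is cut out transversely, so $\mathcal{N}(p,q)$ is a one-dimensional oriented manifold with boundary. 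For compactness, one repeats the argument used earlier to prove compactness in the zero-dimensional case: any divergent sequence in $\mathcal{N}(p,q)$ would limit into a transverse fiber product of $V_0\times V_1$, $W_0\times V_1$, $V_0\times W_1$, or $W_0\times W_1$ with one of the strata (i)--(iv) from Lemma \ref{mark2}, and each such fiber product has negative expected dimension by the genericity hypotheses, hence is empty.

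Next I would compute $\partial\mathcal{N}(p,q)$ via two applications of (\ref{bdryid}): once at the level of the outer fiber product (producing pieces involving $\partial V_0$ and $\partial V_1$ with signs $1$ and $(-1)^{v_0}$ respectively, plus a global sign $(-1)^{2n-v_0-v_1}=(-1)^{v_0+v_1}$ on the remaining piece), and then by substituting the description of $\partial\overline{\tilde{\mathcal{M}}(p,q;f)\times(0,\infty)}$ from Lemma \ref{mark2}. Note $(-1)^{|p|_f-|q|_f}=(-1)^{v_0+v_1}$, so after combining signs each of the six strata $C_i$ contributes to $\partial\mathcal{N}(p,q)$ according to the identity above. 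The strata $C_5$ and $C_6$ contribute nothing since they impose $\gamma(0)=p$ or $\gamma(0)=q$, and hypothesis (C) forces $g_0(V_0),g_1(V_1)$ to miss $Crit(f)$.

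It remains to identify each surviving boundary stratum with the corresponding term in the proposition. Stratum $C_1$, after factoring out $\mathcal{M}(p,r;f)$ using (\ref{prodid})/(\ref{commutid}), yields a coefficient of $q$ equal (up to a sign I will track) to the coefficient of $q$ in $I_{g_0,g_1}(d_f p)$; stratum $C_2$ similarly yields $d_f I_{g_0,g_1}(p)$; stratum $C_3$ gives $I_{g_1}\circ I_{g_0}(p)$, where the sign $(-1)^{v_0(n-v_1)}$ arises from rearranging the two factors $\tilde{\mathcal{M}}(p,r;f)$ and $\tilde{\mathcal{M}}(r,q;f)$ together with $V_0,V_1$ into the natural order $(V_0\times_M \tilde{\mathcal{M}}(p,r;f))\times (V_1\times_M\tilde{\mathcal{M}}(r,q;f))$ via (\ref{prodid}); stratum $C_4$, whose evaluation map is $\gamma\mapsto(\gamma(0),\gamma(0))=\delta(\gamma(0))$, gives $I_{g_0\times_M g_1}(p)$ after applying (\ref{diagid}) to rewrite the fiber product over the diagonal, contributing the sign $(-1)^{n(n-v_1)}$ (and the additional $-1$ in the coefficient $(-1)^{1+n(n-v_1)}$ comes from the $-C_4$ appearing in the description of $\partial\overline{\tilde{\mathcal{M}}(p,q;f)\times(0,\infty)}$ in Lemma \ref{mark2}). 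Setting the total signed count of $\partial\mathcal{N}(p,q)$ equal to zero and reading off coefficients of $q$ for every $q$ with $|q|_f = |p|_f - (2n-v_0-v_1)$ yields the identity applied to an arbitrary generator $p$.

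The main technical obstacle is the sign bookkeeping in matching $C_3$ and $C_4$ with $I_{g_1}I_{g_0}$ and $I_{g_0\times_M g_1}$, since these require simultaneous use of (\ref{prodid}), (\ref{commutid}), and (\ref{diagid}) and a careful tracking of how the $\mathbb{R}$-quotient orientations on $\mathcal{M}(p,r;f)$ interact with the $(0,\infty)$-factor in $C_1,C_2$; this detailed orientation computation is the reason the paper defers the argument to Section \ref{app}.
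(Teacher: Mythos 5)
Your proposal reproduces the paper's argument essentially verbatim: examine the one-dimensional fiber product $(V_0\times V_1)\,{}_{g_0\times g_1}\times_{\bar{E}_1}\overline{\tilde{\mathcal{M}}(p,q;f)\times(0,\infty)}$ for $|p|_f-|q|_f=2n-v_0-v_1$, argue compactness via negative-dimensional limiting fiber products, apply (\ref{bdryid}) to decompose the boundary (using $(-1)^{2n-v_0-v_1}=(-1)^{v_0+v_1}$), match each of $C_1,\ldots,C_4$ to the corresponding term via (\ref{prodid}), (\ref{commutid}), (\ref{diagid}), and observe that $C_5,C_6$ are excluded by hypothesis (C). One small correction to your closing sentence: the paper does carry out this orientation bookkeeping within the proof of Proposition \ref{fundid} itself in Section \ref{ops}; what is deferred to Section \ref{app} is the proof of Lemma \ref{mark2} (the construction of $\overline{\tilde{\mathcal{M}}(p,q;f)\times(0,\infty)}$, the signed identification of its boundary strata $C_1,\ldots,C_6$, and the characterization of $\Omega_{\bar{E}_1}$), which your argument quotes but does not re-derive.
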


\begin{proof} Let $p,q\in Crit(f)$ be critical points whose indices obey $|p|_f-|q|_f+v_0+v_1=2n$.  Then the transversely-cut-out fiber product $(V_0\times V_1){}_{g_0\times g_1}\times_{\bar{E}_1}\overline{\tilde{\mathcal{M}}(p,q;f)\times (0,\infty)}$ is one-dimensional, and the genericity assumption on $h$ together with dimensional considerations\footnote{\emph{i.e.}, any hypothetical divergent sequence would converge to a transversely-cut-out fiber product of negative dimension} imply that this fiber product is compact after adding its oriented boundary, which is given by \small \begin{align*} \partial\left((V_0\times V_1){}_{g_0\times g_1}\times_{\bar{E}_1}\overline{\tilde{\mathcal{M}}(p,q;f)\times (0,\infty)}\right)=&\left(\left(\partial\left(V_0\times V_1\right)\right){}_{g_0\times g_1}\times_{\bar{E}_1}\overline{\tilde{\mathcal{M}}(p,q;f)\times (0,\infty)}\right) \\& \quad \coprod \left(\left( V_0\times V_1\right){}_{g_0\times g_1}\times_{\bar{E}_1}(-1)^{2n-v_0-v_1}\partial\overline{\tilde{\mathcal{M}}(p,q;f)\times (0,\infty)}\right)\end{align*}  \normalsize

Now \begin{align*} 
\left(\partial  \left(V_0\times V_1\right)\right)&{}_{g_0\times g_1}\times_{\bar{E}_1}\overline{\tilde{\mathcal{M}}(p,q;f)\times (0,\infty)}\\= &\left(\left((\partial V_0)\times V_1\right){}_{(\partial g_0)\times g_1}\times_{\bar{E}_1}\overline{\tilde{\mathcal{M}}(p,q;f)\times (0,\infty)}\right)\\ & \qquad\qquad\sqcup (-1)^{v_0}\left(\left( V_0\times (\partial V_1)\right){}_{g_0\times \partial g_1}\times_{\bar{E}_1}\overline{\tilde{\mathcal{M}}(p,q;f)\times (0,\infty)}\right) \end{align*} 
has signed number of points (evaluated in $\mathbb{K}$)  equal to \[ \Pi\left(q,I_{\partial g_0,g_1}p\right)+(-1)^{v_0}\Pi\left(q,I_{g_0,\partial g_1}p\right).\]

Meanwhile since $2n-v_0-v_1=|p|_f-|q|_f$ we see that, with notation as in Lemma \ref{mark2}, \begin{align*} &\left(\left( V_0\times V_1\right){}_{g_0\times g_1}\times_{\bar{E}_1}(-1)^{2n-v_0-v_1}\partial\overline{\tilde{\mathcal{M}}(p,q;f)\times (0,\infty)}\right)\\ &\qquad = 
\left(V_0\times V_1\right){}_{g_0\times g_1}\times_{\bar{E}_1}\left(((-1)^{v_0+v_1}C_1)\sqcup C_2\sqcup C_3\sqcup(-C_4)\right)
\end{align*}

(The fiber products with $C_5$ and $C_6$ are empty since $\overline{g_0(V_0)}$ and $\overline{g_1(V_1)}$ are disjoint from $Crit(f)$.)

Now the signed number of points in $\left(V_0\times V_1\right){}_{g_0\times g_1}\times_{\bar{E}_1}((-1)^{v_0+v_1}C_1)$
is easily seen to be \[ (-1)^{v_0+v_1}\Pi\left(q,I_{g_0,g_1}d_fp\right),\] while that in $\left(V_0\times V_1\right){}_{g_0\times g_1}\times_{\bar{E}_1}C_2$ is \[ \Pi\left(q,d_fI_{g_0,g_1}p\right).\]  Meanwhile for any critical point $r$ with $|q|_f<|r|_f<|p|_f$ we have, using (\ref{prodid}), \begin{align*} \left(V_0\times V_1\right)&{}_{g_0\times g_1}\times_{e_{pr}\times e_{rq}}\left(\tilde{\mathcal{M}}(p,r;f)\times \tilde{\mathcal{M}}(r,q;f)\right)\\&=(-1)^{(n-v_1)(n-|p|_f+|r|_f)}\left(V_0\,{}_{g_0}\times_{e_{pr}}  \tilde{\mathcal{M}}(p,r;f) \right)\times\left( V_1\,{}_{g_1}\times_{e_{rq}}  \tilde{\mathcal{M}}(r,q;f) \right).\end{align*}  In order for neither $V_0\,{}_{g_0}\times_{e_{pr}}  \tilde{\mathcal{M}}(p,r;f)$ nor $V_1\,{}_{g_1}\times_{e_{rq}}  \tilde{\mathcal{M}}(r,q;f) $ to be nonempty it is necessary that $v_0+|p|_f-|r|_f=n$, in view of which it follows that the signed number of points in $\left(V_0\times V_1\right){}_{g_0\times g_1}\times_{\bar{E}_1}C_3$ is \[ (-1)^{v_0(n-v_1)}\Pi\left(q,I_{g_1}I_{g_0}p\right).\]  

Finally, where $\delta\co M\to M\times M$ for the diagonal embedding, using (\ref{associd}) and (\ref{diagid}) we have \begin{align*}
\left(V_0\times V_1\right){}_{g_0\times g_1}\times_{\bar{E}_1}(-C_4)&=-\left(V_0\times V_1\right){}_{g_0\times g_1}\times_{\delta}\left(M{}_{1_M}\times_{e_{pq}}\tilde{\mathcal{M}}(p,q;f)\right)
\\&=-\left(\left(V_0\times V_1\right){}_{g_0\times g_1}\times_{\delta}M\right){}_{1_M}\times_{e_{pq}}\tilde{\mathcal{M}}(p,q;f)
\\&=(-1)^{1+n(n-v_1)}\left(V_0\,{}_{g_0}\times_{g_1}V_1\right){}_{g_0\times_M g_1}\times_{e_{pq}}\tilde{\mathcal{M}}(p,q;f)
\end{align*}
Thus the signed number of points in $\left(V_0\times V_1\right){}_{g_0\times_M g_1}\times_{\bar{E}_1}(-C_4)$ is \[ (-1)^{1+n(n-v_1)}\Pi\left(q,I_{g_0\times_M g_1}p\right).\]  

We have now computed the signed number of points in all of the components of the boundary of the compact oriented one-manifold $(V_0\times V_1){}_{g_0\times g_1}\times_{\bar{E}_1}\overline{\tilde{\mathcal{M}}(p,q;f)\times (0,\infty)}$.  Of course, the total signed number of boundary points of this manifold is necessarily zero, and so we obtain  \begin{align*} 0=\Pi & \left(q,I_{\partial g_0,g_1}p +(-1)^{v_0}I_{g_0,\partial g_1}p+(-1)^{v_0+v_1}I_{g_0,g_1}d_fp+d_f I_{g_0,g_1}p\right.\\&\qquad\qquad\qquad\qquad \left. +(-1)^{v_0(n-v_1)}I_{g_1}I_{g_0}p+(-1)^{1+n(n-v_1)}I_{g_0\times_M g_1}p\right).\end{align*}

Since this holds for all critical points $p$ and $q$ of the appropriate indices the result follows.
\end{proof}

\begin{remark}\label{ainfty} Of course, one may continue in this fashion and define, for any positive integer $k$ and suitably transverse pseudochains $g_i\co V_i\to M$ for $i=0,\ldots,k-1$ of dimension $v_i$, operations \[ I_{g_0,\ldots,g_{k-1}}\co CM_{*}(f;\K)\to CM_{*-1-\sum_{i=0}^{k-1}(n-v_i-1)}(f;\K)\] by counting elements of fiber products $\left(V_0\times\cdots\times V_{k-1}\right){}_{g_0\times\cdots\times g_{k-1}}\times_{E_k}\left(\tilde{\mathcal{M}}(p,q;f,h)\times \mathbb{R}_{+}^{k-1}\right)$. One can see that these operations satisfy \small \begin{align*} (-1)^k & d_f  I_{g_0,\ldots,g_{k-1}}+(-1)^{\sum_{i=0}^{k-1}(n-v_i)}I_{g_0,\ldots,g_{k-1}}d_f +\sum_{l=0}^{k-1}(-1)^{\sum_{i=0}^{l-1}v_i}I_{\ldots,g_{l-1},\partial g_l,g_{l+1},\ldots}\\&+\sum_{l=1}^{k-1}\left((-1)^{kl+\left(1+\sum_{i=0}^{l-1}(v_i-1)\right)\left(\sum_{j=l}^{k-1}(n-v_j)\right)}I_{g_{l},\ldots,g_{k-1}}I_{g_0,\ldots,g_{l-1}}+(-1)^{k+l+n\sum_{j=l}^{k-1}(n-v_j)}I_{\ldots,g_{l-2},g_{l-1}\times_Mg_{l},g_{l+1},\ldots}\right)
=0\end{align*}\normalsize

The proof of this identity for the most part follows straightforwardly by the same arguments as were used in the proofs of Lemma \ref{mark2} and Proposition \ref{fundid}; a little additional effort is required to obtain the sign on $I_{\ldots,g_{l-2},g_{l-1}\times_Mg_{l},g_{l+1},\ldots}$, which entails comparing the orientations of \begin{equation}\label{fp1} \left(\cdots\times V_{l-2}\times(V_{l-1}\,{}_{g_{l-1}}\times_{g_l}V_l)\times V_{l+1}\times\cdots\right){}_{\ldots\times g_{l-2}\times g_{l-1}\times_Mg_{l}\times g_{l+1},\ldots}\times_{\bar{E}_{k-1}}(\tilde{\mathcal{M}}(p,q;f)\times\R_{+}^{k-2}) \end{equation} and \begin{equation}\label{fp2} (V_0\times\cdots\times V_{k-1}){}_{g_0\times\cdots\times g_{l-1}}\times_{\delta_l\circ\bar{E}_{k-1}} (\tilde{\mathcal{M}}(p,q;f)\times\R_{+}^{k-2})\end{equation} where $\delta_l\co M^{k-1}\to M^k$ is defined by $\delta_l(m_0,\ldots,m_{k-2})=(m_0,\ldots,m_{l-1},m_{l-1},\ldots,m_{k-2})$.  To do this, note that we can rewrite (\ref{fp2}) as \[ \left((V_0\times\cdots\times V_{k-1}){}_{g_0\times\cdots\times g_{l-1}}\times_{\delta_l}M^{k-1}\right){}_{1_{M^{k-1}}}\times_{\bar{E}_{k-1}} (\tilde{\mathcal{M}}(p,q;f)\times\R_{+}^{k-2}),\] so that the problem reduces to comparing the orientation of $(V_0\times\cdots\times V_{k-1}){}_{g_0\times\cdots\times g_{l-1}}\times_{\delta_l}M^{k-1}$ to that of $V_0\times\cdots\times V_{l-2}\times(V_{l-1}\,{}_{g_{l-1}}\times_{g_l}V_l)\times V_{l+1}\times\cdots\times V_{k-1}$.  In turn, this can be done by repeated use of (\ref{prodid}) and (\ref{diagid}).  We will not use this construction for $k>2$, so further details are left to the reader.
\end{remark}

\section{From linked pseudoboundaries to critical points}\label{linkcrit}

We are now prepared to demonstrate a relationship between linking numbers of pseudoboundaries and the Morse-theoretic linking pairing (\ref{mlpair}).  We continue to fix a Morse function $f\co M\to\R$ where $M$ is a compact oriented smooth $n$-dimensional manifold.


\begin{definition}For any integer $k$ with $0\leq k\leq n-1$, \begin{itemize}\item $\mathcal{B}_k(M)$ denotes the set of $k$-pseudoboundaries in $M$. \item  $\mathcal{T}_k(M,f)$ denotes the collection of pairs $(b_0,b_1)\in \mathcal{B}_{k}(M)\times \mathcal{B}_{n-k-1}(M)$ such that \[ \overline{Im(b_{0})}\cap Crit(f)=\overline{Im(b_{1})}\cap Crit(f)=\overline{Im(b_{0})}\cap \overline{Im(b_{1})}=\varnothing.\]\end{itemize}
\end{definition}

Thus if $(b_0,b_1)\in \mathcal{T}_k(M,f)$ then we obtain a well-defined linking number $lk(b_0,b_1)$, and by Proposition \ref{genmet}, all metrics $h$ in some residual subset will be generic with respect to $f,b_{0},b_{1}$ in the sense of Definition \ref{genwrt}.
 For any such metric $h$ we may then define the maps $I_{b_0},I_{b_1}\co CM_{*}(\pm f;\K)\to CM_{*}(\pm f;\K)$ and $I_{b_0,b_1}\co CM_{*}(f;\K)\to CM_{*}(f;\K)$.

Recall that the Morse complex $CM_*(f;\K)$ on an $n$-dimensional manifold $M$ carries a distinguished element $M_f\in CM_n(f;\K)$ defined in (\ref{mdef}), which is a cycle by Proposition \ref{dmzero}.

\begin{prop}\label{linklinkprop} Let $f\co M\to\R$ be a Morse function on a compact smooth oriented $n$-dimensional manifold $M$, suppose that $(b_0,b_1)\in \mathcal{T}_k(M,f)$ where $0\leq k\leq n-1$, let $\K$ be any ring, and choose a Riemannian metric $h$ which is generic with respect to $f,b_0,b_1$.   Then:
\begin{itemize}
\item[(i)] The elements $I_{b_0} M_f$ and $I_{b_1} M_{-f}$ belong to the images of the maps $d_f\co CM_{k+1}(f;\K)\to CM_k(f;\K)$ and $d_{-f}\co CM_{n-k}(-f;\K)\to CM_{n-k-1}(-f;\K)$, respectively.
\item[(ii)] Where $lk_{\K}(b_1,b_0)$ is the image of $lk(b_1,b_0)$ under the unique unital ring morphism $\Z\to \K$, \begin{equation}\label{linklink} \Lambda(I_{b_1}M_{-f},I_{b_0}M_f)=lk_{\K}(b_1,b_0)-(-1)^{(k+1)(n-k)}\Pi(M_{-f},I_{b_0,b_1}M_f).\end{equation}  
\end{itemize}
\end{prop}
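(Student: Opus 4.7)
The plan is to derive both parts from the operations developed in Section \ref{ops}: part (i) is immediate from Proposition \ref{igprop}(ii), while part (ii) combines Propositions \ref{igprop}(i), \ref{fundid}, and \ref{piint}.

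For (i), choose a bounding pseudochain $B_0\co X_0\to M$ for $b_0$, where $\dim X_0 = k+1$ and $\partial B_0 = b_0$. Proposition \ref{igprop}(ii) yields
\[ I_{b_0} = d_f I_{B_0} - (-1)^{n-k-1}I_{B_0}d_f. \]
Evaluating at $M_f$ and using $d_fM_f=0$ (Proposition \ref{dmzero}) gives $I_{b_0}M_f = d_f I_{B_0}M_f \in \mathrm{Im}(d_f)$. The analogous argument with a bounding pseudochain $B_1$ of $b_1$, applied to the Morse complex of $-f$, yields $I_{b_1}M_{-f}\in \mathrm{Im}(d_{-f})$.

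For (ii), the choice $z = I_{B_0}M_f$ realizes $d_f z = I_{b_0}M_f$, so by definition of $\Lambda$,
\[ \Lambda(I_{b_1}M_{-f}, I_{b_0}M_f) = \Pi(I_{b_1}M_{-f}, I_{B_0}M_f). \]
The adjoint relation Proposition \ref{igprop}(i) applied with $g=b_1$ (so $v = n-k-1$) and $y = I_{B_0}M_f \in CM_{k+1}(f;\K)$ relocates $I_{b_1}$ onto the second argument at the cost of the sign $(-1)^{(k+1)(n-k-1)}$, giving
\[ \Lambda(I_{b_1}M_{-f}, I_{b_0}M_f) = (-1)^{(k+1)(n-k-1)}\Pi(M_{-f}, I_{b_1}I_{B_0}M_f). \]
To handle $I_{b_1}I_{B_0}$, apply Proposition \ref{fundid} with $g_0 = B_0$, $g_1 = b_1$. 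Since $b_1$ is a pseudocycle, the $I_{g_0,\partial g_1}$ term drops out; evaluating at $M_f$ kills the $I_{g_0,g_1}d_f$ term via Proposition \ref{dmzero}; and pairing with $M_{-f}$ kills the remaining $d_fI_{B_0,b_1}M_f$ term by the adjoint relation (\ref{adjpi}) combined with $d_{-f}M_{-f}=0$. This leaves a three-term identity in $\K$ relating $\Pi(M_{-f},I_{b_1}I_{B_0}M_f)$, $\Pi(M_{-f},I_{b_0,b_1}M_f)$, and $\Pi(M_{-f},I_{B_0\times_M b_1}M_f)$.

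The last of these three terms encodes the linking number: the fiber product $X_0\,{}_{B_0}\times_{b_1}V_1$ is the compact oriented zero-manifold whose signed count defines $lk(b_1,b_0)$ in Definition \ref{lk}, so Proposition \ref{piint} gives $\Pi(M_{-f},I_{B_0\times_M b_1}M_f) = lk_{\K}(b_1,b_0)$. Solving the three-term identity for $\Pi(M_{-f},I_{b_1}I_{B_0}M_f)$ and substituting back into the expression for $\Lambda$ produces the formula (\ref{linklink}), after the parity simplifications $(k+1)(n-k-1)+(k+1)\equiv(k+1)(n-k)\pmod 2$ and $(k+1)(2n-k)\equiv k(k+1)\equiv 0\pmod 2$. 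The main obstacle, and indeed the only real work, is sign-bookkeeping: Propositions \ref{igprop} and \ref{fundid} each contain several sign factors depending on degree parameters, and the cancellation producing the clean coefficient $+1$ on the linking-number term is a nontrivial consequence of the global parity identity $k(k+1)\equiv 0\pmod 2$.
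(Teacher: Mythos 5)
Your argument matches the paper's proof of the case $0\leq k<n-1$ step for step: bound $b_0$ by a pseudochain $c_0\co C_0\to M$, derive (i) from Proposition \ref{igprop}(ii) and $d_fM_f=0$, then establish (ii) by applying Proposition \ref{fundid} with $g_0=c_0$, $g_1=b_1$, killing the $I_{c_0,b_1}d_f$ and $d_fI_{c_0,b_1}$ terms via $d_fM_f=0$ and $d_{-f}M_{-f}=0$, identifying $\Pi(M_{-f},I_{c_0\times_Mb_1}M_f)$ with $lk_{\K}(b_1,b_0)$ via Definition \ref{lk} and Proposition \ref{piint}, and finally moving $I_{b_1}$ across $\Pi$ via Proposition \ref{igprop}(i). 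Your sign bookkeeping is correct, including the collapse of $(-1)^{(k+1)(n-k-1)+(n+1)(k+1)}=(-1)^{(k+1)(2n-k)}$ to $+1$ via $k(k+1)\equiv 0\pmod 2$.

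However, there is a genuine gap: the argument as you present it fails when $k=n-1$. In that case the bounding pseudochain $c_0$ has dimension $k+1=n$, so $\overline{c_0(C_0)}$ will generically (and, when $b_0$ links nontrivially with anything, necessarily) contain critical points of $f$. This violates hypothesis (C) on page \pageref{fundid} that is required to define the two-chain operator $I_{c_0,b_1}$, so Proposition \ref{fundid} cannot be invoked with $g_0=c_0$. You implicitly assume throughout that $\overline{c_0(C_0)}\cap Crit(f)=\varnothing$ can be arranged (a perturbation step you do not mention, but which the paper carries out via Proposition \ref{tvs1}); for $k<n-1$ this is harmless, but for $k=n-1$ it is impossible. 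The paper resolves this when $k=n-1\geq 1$ by symmetry: it bounds $b_1$ instead with a pseudochain $c_1$ of dimension $n-k=1<n$, applies Proposition \ref{fundid} with $g_0=b_0$, $g_1=c_1$ (so the full-dimensional chain is absent), and uses Proposition \ref{linksym} to translate $lk(b_1,b_0)$ into $-\Pi(M_{-f},I_{b_0\times_Mc_1}M_f)$. And the remaining case $n=1$, $k=0$, where both $k+1=n$ and $n-k=n$, escapes both arguments and requires a separate direct combinatorial verification on one-manifolds. Your proof needs to address these cases to be complete.
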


\begin{remark}\label{nomassey} Observe that the last term in (\ref{linklink}) counts integral curves $\gamma\co[0,T]\to M$ of $-\nabla f$ (with $T>0$ arbitrary) such that $\gamma(0)\in b_0(B_0)$ and $\gamma(T)\in b_1(B_1)$.  In particular the last term of (\ref{linklink}) automatically vanishes if $\inf(f|_{b_1(B_1)})\geq \sup(f|_{b_0(B_0)})$, by virtue of the fact that $f$ decreases along its negative gradient flowlines.
\end{remark}

\begin{proof} For notational convenience we will first give the proof assuming that $k<n-1$; at the end of the proof we will then indicate how modify the proof if instead $k=n-1$.  

Since $b_0$ and $b_1$ are assumed to be pseudoboundaries, there are pseudochains $c_0\co C_0\to M$ and $c_1\co C_1\to M$, of dimensions $k+1$ and $n-k$ respectively, such that $\partial C_0=B_0$, $\partial C_1=B_1$, $c_0|_{B_0}=b_0$, and $c_1|_{B_1}=b_1$.  By a suitable perturbation  we may assume that the conclusion of Proposition \ref{tvs1} holds with $F=c_0$ and $g=b_1$, and moreover that $\Omega_{c_1}\cap Crit(f)=\overline{c_0(C_0)}\cap Crit(f)=\varnothing$ (for the latter we use the assumption that $k\neq n-1$) and that each point of $Crit(f)$ is a regular value for $c_1$.  We will always assume below that the  Riemannian metric is chosen from the intersection of an appropriate collection of the residual subsets given by Proposition \ref{genmet}.

Statement (i) then follows from Propositions \ref{dmzero} and \ref{igprop}(ii), as we have \[ d_f(I_{c_0}M_f)=I_{b_0}M_f+(-1)^{n-k-1}I_{c_0}d_fM_f=I_{b_0}M_f \] and likewise $d_{-f}(I_{c_1}M_{-f})=I_{b_1}M_{-f}$.

Moreover, by Definition \ref{lk} and Proposition \ref{piint} we have \[ lk_{\K}(b_1,b_0)=\Pi\left(M_{-f},I_{c_0\times_M b_1}M_f\right).\]

Now since $\partial B_1=\varnothing$ and since $c_0|_{\partial C_0}=b_0$, Proposition \ref{fundid} applied with $g_0=c_0$ and $g_1=b_1$ gives (bearing in mind that $(-1)^{k(k+1)}=1$) \begin{equation}\label{pq} I_{c_0\times_M b_1}-(-1)^{(n-k)(k+1)}I_{b_0,b_1}=(-1)^nI_{c_0,b_1}d_f+(-1)^{n(k+1)}d_fI_{c_0,b_1}+(-1)^{(n+1)(k+1)}I_{b_1}I_{c_0}.\end{equation}  Now since $d_fM_f=0$ and $d_{-f}M_{-f}=0$ we have \[ \Pi(M_{-f},I_{c_0,b_1}d_f M_f)=0\qquad \mbox{and}\qquad \Pi(M_{-f},d_fI_{c_0,b_1}M_f)=(-1)^n\Pi(d_{-f}M_{-f},I_{c_0,b_1}M_f)=0.
\]

So by (\ref{pq}) and Proposition \ref{igprop}(i) we obtain \begin{align*} lk_{\K}(b_1,b_0)-&(-1)^{(n-k)(k+1)}\Pi(M_{-f},I_{b_0,b_1}M_f)=(-1)^{(n+1)(k+1)}\Pi(M_{-f},I_{b_1}I_{c_0}M_f)\\&=(-1)^{(n+1)(k+1)}(-1)^{(k+1)(n-k-1)}\Pi(I_{b_1}M_{-f},I_{c_0}M_f)=\Pi(I_{b_1}M_{-f},I_{c_0}M_f)\end{align*}

Since $d_f\left(I_{c_0}M_f\right)=I_{b_0}M_f$, we have by definition $\Pi(I_{b_1}M_{-f},I_{c_0}M_f)=\Lambda(I_{b_1}M_{-f},I_{b_0}M_f)$, proving 
(\ref{linklink}).  

This completes the proof if $k<n-1$.  Now suppose that $k=n-1\geq 1$.  Then $n-k-1< n-1$, so in the first paragraph of the proof we may instead arrange for $\overline{c_1(C_1)}\cap Crit(f)=\Omega_{c_0}\cap Crit(f)=\varnothing$ and for every point of $Crit(f)$ to be a regular value for $c_0$.  Just as in the $k<n-1$ case we have $I_{b_0} M_f=d(I_{c_0}M_f)$ and $I_{b_1} M_{-f}=d(I_{c_1}M_{-f})$.  If the image of $c_0$ intersects $Crit(f)$ then the operator $I_{c_0,b_1}$ is no longer defined; however now $I_{b_0,c_1}$ is defined, and using Proposition \ref{linksym} we have $lk(b_1,b_0)=-\Pi(M_{-f},I_{b_0\times_M c_1}M_f)$.  Then using Proposition \ref{fundid} with $g_0=b_0$ and $g_1=c_1$ together with (\ref{altlam}),  an identical argument to the one given above yields (\ref{linklink}).

The only remaining case is where $n=1$ and $k=0$, \emph{i.e.} where $M$ is a disjoint union of circles and the pseudoboundaries $b_0$ and $b_1$ are homologically trivial linear combinations of points on these circles.  In this case the proposition is an exercise in the combinatorics of points on one-manifolds equipped with a Morse function, for which we give the following outline, leaving details to the reader.  The linking number $lk(b_1,b_0)$ is computed by pairwise connecting the points of $b_0$ by a collection $\mathcal{I}_0$ of intervals and then counting the intersections of these intervals with the points of $b_1$.  To compute $\Lambda(I_{b_1} M_{-f},I_{b_0} M_f)$, one modifies the intervals of $\mathcal{I}_0$ by, for each point $p$ of $b_0$, adding or deleting the segment from $p$ to the local minimum adjacent to $p$, and then counts the intersections of these modified intervals with the points of $b_1$.  The difference $\Lambda(I_{b_1} M_{-f},I_{b_0} M_f)-lk_{\K}(b_1,b_0)$ then counts the points of $b_1$ which lie between a point of $b_0$ and its adjacent minimum, \emph{i.e.}, the points of $b_1$ which lie below a point of $b_0$ on a gradient flowline of $f$.  Such points  are precisely counted by $\Pi(M_{-f},I_{b_0,b_1}M_f)$, proving (\ref{linklink}).
\end{proof}


\begin{cor}\label{linkcor1} Let $\K$ be a field, let $f\co M\to\R$ be a Morse function on a compact smooth oriented $n$-manifold $M$, and suppose that, for $1\leq i\leq r$, $1\leq j\leq s$, we have $b_{i,+}\in \mathcal{B}_k(M)$ and $b_{j,-}\in \mathcal{B}_{n-k-1}(M)$ such that, for all $i,j$, $(b_{j,-},b_{i,+})\in \mathcal{T}_k(M,f)$.  Choose a Riemannian metric which is generic with respect to $f,b_{i,+},b_{j,-}$ for all $i$ and $j$  and consider the $r\times s$ matrix $L$ with entries \[ L_{ij}=lk_{\K}(b_{j,-},b_{i,+})-(-1)^{(n-k)(k+1)}\Pi(M_{-f},I_{b_{i,+},b_{j,-}}M_f) \] 
Then the rank of the operator $d_{f,k+1}\co CM_{k+1}(f;\K)\to CM_k(f;\K)$ is at least equal to the rank of the matrix $L$.  Thus where $c_j(f)$ denotes the number of critical points of $f$ with index $j$, and where $\mathfrak{b}_j(M;\K)$ is the rank of the $j$th singular homology of $M$ with coefficients in $\K$, we have \[ c_{k}(f)\geq \mathfrak{b}_k(M;\K)+\rk(L)\qquad \mbox{ and }\qquad c_{k+1}(f)\geq \mathfrak{b}_{k+1}(M;\K)+\rk(L).\]
\end{cor}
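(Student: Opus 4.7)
The plan is to reduce the corollary to Proposition~\ref{linklinkprop} via an elementary observation about the rank of a matrix of values of a bilinear pairing, followed by a routine application of the Morse polynomial identity recorded in Section~\ref{intro}. First, I would apply Proposition~\ref{linklinkprop} to each pair $(b_{j,-},b_{i,+})\in\mathcal{T}_k(M,f)$: part~(i) produces elements
\[ x_i := I_{b_{i,+}}M_f \in CM_k(f;\K)\cap\mathrm{Im}(d_{f,k+1}), \qquad y_j := I_{b_{j,-}}M_{-f}\in CM_{n-k-1}(-f;\K)\cap\mathrm{Im}(d_{-f,n-k}), \]
and part~(ii) identifies the matrix entry $L_{ij}$ as $\Lambda(y_j,x_i)$, where $\Lambda$ is the Morse-theoretic linking pairing of (\ref{mlpair}). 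Thus $L$ is precisely the matrix of values of $\Lambda$ on the pair of tuples $(y_j)$ and $(x_i)$. A single Riemannian metric $h$ can be chosen to define $I_{b_{i,+}}$, $I_{b_{j,-}}$, and $I_{b_{i,+},b_{j,-}}$ for all $i,j$ simultaneously because the intersection of the finitely many residual subsets supplied by Proposition~\ref{genmet} is still residual.

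Next, I would apply the general fact that the rank of any matrix of the form $A_{ij}=\langle y_j,x_i\rangle$ coming from a $\K$-bilinear pairing is at most $\dim\mathrm{span}\{x_i\}$: any linear relation $\sum_i a_ix_i=0$ forces $\sum_i a_iA_{ij}=\langle y_j,\sum_i a_ix_i\rangle=0$, yielding the corresponding relation among the rows of $A$. Since every $x_i$ lies in $\mathrm{Im}(d_{f,k+1})$, this gives
\[ \rk(L)\leq \dim\mathrm{span}\{x_i\}\leq \dim\mathrm{Im}(d_{f,k+1})=\rk(d_{f,k+1}), \]
which is the main inequality asserted by the corollary.

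To deduce the bounds on $c_k(f)$ and $c_{k+1}(f)$, I would read off the coefficients of $t^k$ and $t^{k+1}$ on both sides of the Morse polynomial identity $\mathfrak{M}_f(t)=\mathfrak{P}_M(t;\K)+(1+t)\mathfrak{Q}_f(t;\K)$. Using that $q_j(f;\K)=\rk(d_{f,j+1})\geq 0$, this gives $c_k(f)=\mathfrak{b}_k(M;\K)+q_k(f;\K)+q_{k-1}(f;\K)\geq\mathfrak{b}_k(M;\K)+\rk(d_{f,k+1})$, and an analogous inequality for $c_{k+1}(f)$. Combining these with $\rk(d_{f,k+1})\geq\rk(L)$ completes the proof.

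There is essentially no serious obstacle here once Proposition~\ref{linklinkprop} is available: that proposition is where all the real work lives, translating the geometric quantity $L_{ij}$ into an algebraic pairing of two explicit boundaries in the Morse complex. The only point deserving minor care is matching up degrees so that $\Lambda(y_j,x_i)$ is actually defined, but since $\Pi$ pairs $CM_{n-*}(-f;\K)$ with $CM_*(f;\K)$ and a primitive $z$ for $x_i$ lies in $CM_{k+1}(f;\K)$, the expression $\Lambda(y_j,x_i)=\Pi(y_j,z)$ makes sense as stated.
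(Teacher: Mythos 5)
Your proposal is correct and takes essentially the same approach as the paper: you use Proposition~\ref{linklinkprop} to identify $L_{ij}=\Lambda(I_{b_{j,-}}M_{-f},I_{b_{i,+}}M_f)$ and then bound $\rk(L)$ by $\dim Im(d_{f,k+1})$ via the linking pairing, which is precisely what the paper does by factoring $L^{\diamond}$ through $Im(d_{f,k+1})$ in a commutative diagram; your phrasing in terms of linear relations among the $x_i$ producing relations among the rows of $L$ is an equivalent way of expressing the same factorization.
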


\begin{proof}
Denote \[ B_{k}^{f}=Im\left(d_f\co  CM_{k+1}(f;\K)\to CM_k(f;\K)\right) \quad B_{n-k-1}^{-f}=Im\left(d_{-f}\co  CM_{n-k}(-f;\K)\to CM_{n-k-1}(-f;\K)\right).\]  The Morse-theoretic linking form $\Lambda$ gives a linear map $\Lambda^{\diamond}\co B_{k}^{f}\to Hom_{\K}(B_{n-k-1}^{-f};\K)$ defined by $(\Lambda^{\diamond}x)(y)=\Lambda(y,x)$.  Define $A_f\co \K^{r}\to B_{k}^{f}$ by $A_f(x_1,\ldots,x_r)=\sum_i x_i I_{b_{i,+}}M_f$, and $A_{-f}\co \K^s\to B_{n-k-1}^{f}$ by $A_{-f}(y_1,\ldots,y_s)=\sum_j y_jI_{b_{j,-}}M_{-f}$.  Then where $A_{-f}^{*}\co Hom_{\K}(B_{n-k-1}^{f},\K)\to Hom_{\K}(\K^{s},\K)$ denotes the adjoint of $A_{-f}$, Proposition \ref{linklinkprop} shows that we have a commutative diagram \[ \xymatrix{ 
\K^r\ar[r]^{A_f} \ar[d]_{L^{\diamond}} & B_{k}^{f} \ar[d]^{\Lambda^{\diamond}} \\ Hom_{\K}(\K^{s},\K) & \ar[l]^{A_{-f}^{*}} Hom_{\K}(B_{n-k-1}^{f},\K)
}\] where $L^{\diamond}$ is defined by $(L^{\diamond}\vec{x})(\vec{y})=\sum_{i,j}L_{ij}x_iy_j$.  The rank of the linear map $L^{\diamond}$ is equal to the rank of the matrix $L$, so since $L^{\diamond}$ factors through $B_{k}^{f}$ it follows that $B_{k}^{f}$ has dimension at least equal to the rank of $L$.

The last sentence of the corollary then follows immediately, since $CM_{k}(f;\K)$ and $CM_{k+1}(f;\K)$ are freely generated over $\K$ by the critical points of $f$ with index, respectively, $k$ and $k+1$, and since the singular homology of $M$ is equal to the homology of the complex $(CM_*(f;\K),d_f)$ (so that $c_k(f)$ and $c_{k+1}(f)$ are each  equal to at least the rank of $d_f\co  CM_{k+1}(f;\K)\to CM_k(f;\K)$ plus, respectively, $\mathfrak{b}_k(M;\K)$ and $\mathfrak{b}_{k+1}(M;\K)$). 
\end{proof}



We would now like to connect some of these results to the filtration structure on the Morse complex $CM_*(f;\K)$ of $f$.  Define a function $\ell_f\co CM_*(f;\K)\to\R\cup\{-\infty\}$ by \[ \ell_f\left(\sum_{p\in Crit(f)}a_p p\right)=\max\{f(p)|a_p\neq 0\},\] where the maximum of the empty set is defined to be $-\infty$.  Then for any $\lambda\in \R$ and $k\in\N$, \[ CM^{\lambda}_{*}(f;\K)=\{y\in CM_*(f;\K)|\ell_f(y)\leq \lambda\} \] is a subcomplex of $CM_*(f;\K)$ (with respect to the Morse boundary operator associated to any Morse--Smale metric), owing to the fact that the function $f$ decreases along its negative gradient flowlines.  Of course we have corresponding notions with $f$ replaced by $-f$.

One useful fact is that the \emph{filtered isomorphism type} of the Morse complex $CM_{*}(f;\K)$ is independent of the choice of the Morse--Smale metric $h$ used to define it.  This was essentially observed in \cite[Theorem 1.19, Remark 1.23(b)]{CR}; see also \cite[Lemma 3.8]{U11} for a proof of the analogous statement in the more complicated setting of Hamiltonian Floer theory.

\begin{definition} Let $f\co M\to \R$ be a Morse function on a compact $n$-dimensional manifold and fix a coefficient ring $\K$, a metric $h$ with respect to which the negative gradient flow of $f$ is Morse--Smale, and a number $k\in \{0,\ldots,n-1\}$.  The \emph{algebraic link separation} of $f$ is the quantity \[ \beta^{alg}_{k}(f;\K)=\sup\left(\{0\}\cup\left\{-\ell_{-f}(x)-\ell_f(y)|x\in Im(d_{-f,n-k}),y\in Im(d_{f,k+1}),\,\Lambda(x,y)\neq 0\right\}\right).\]
\end{definition}

As the notation suggests, this quantity depends on $\K$ but not on the metric $h$.  This can be proven in a variety of different ways; for instance, for a given metric $h$, the complex $CM_{*}(-f;\K)$ is given as the dual of the complex $CM_*(f;\K)$ by means of the pairing $\Pi$ according to (\ref{adjpi}).  Consequently  $\beta^{alg}_{k}(f;\K)$ is determined by the filtered isomorphism type of $CM_*(f;\K)$, which as mentioned earlier is independent of $h$.


The second sentence of the following is an easy special case of \cite[Corollary 1.6]{U10}; we include a self-contained proof to save the reader the trouble of wading through the technicalities required for the more general version proven there.

\begin{prop}\label{alg-betaprop}  For any nontrivial coefficient ring $\K$ and any grading $k$ we have $\beta^{alg}_{k}(f;\K)=0$ if and only if $d_{f,k+1}=0$.  Furthermore, if $\K$ is a field, then \begin{equation} \label{alg-beta}
\beta^{alg}_{k}(f;\K)=\inf\left\{\beta\geq 0\left|(\forall \lambda\in\R)\left(Im(d_{f,k+1})\cap CM^{\lambda}_{k}(f;\K)\subset d_{f,k+1}(CM_{k+1}^{\lambda+\beta}(f;\K))\right) \right.\right\}. \end{equation}  
\end{prop}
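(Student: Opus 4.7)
The plan has two phases. First I handle the equivalence $\beta^{alg}_k(f;\K)=0$ if and only if $d_{f,k+1}=0$ for an arbitrary nontrivial ring $\K$ by a direct construction, and then I prove the formula (\ref{alg-beta}) for fields via a duality argument between the complexes $CM_*(f;\K)$ and $CM_*(-f;\K)$ with respect to the pairing $\Pi$.

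If $d_{f,k+1}=0$ then $Im(d_{f,k+1})=\{0\}$, and for $y=0$ one has $\Lambda(x,0)=\Pi(x,0)=0$, so the indexing set in the definition of $\beta^{alg}_k$ is empty and $\beta^{alg}_k=0$. Conversely, assume $d_{f,k+1}\neq 0$ and choose critical points $p,q$ with $|p|_f=k+1$, $|q|_f=k$, and $m_f(p,q)\neq 0$ in $\K$. Set $z=p$ and $y=d_f z$, which is nonzero. Pick a critical point $q_0$ appearing in $y$ with $f(q_0)=\ell_f(y)$, set $w=q_0\in CM_{n-k}(-f;\K)$, and let $x=d_{-f}w\in Im(d_{-f,n-k})$. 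The adjoint identity (\ref{adjpi}) gives $\Lambda(x,y)=\Pi(x,z)=\pm\Pi(w,d_f z)=\pm(\mbox{coefficient of }q_0\mbox{ in }y)\neq 0$. Every critical point $q'$ appearing in $d_{-f}q_0$ satisfies $m_f(q',q_0)\neq 0$ by (\ref{dualm}), hence $f(q')>f(q_0)$ because $f$ strictly decreases along its gradient flowlines, so $\ell_{-f}(x)<-f(q_0)=-\ell_f(y)$; thus $-\ell_{-f}(x)-\ell_f(y)>0$ and $\beta^{alg}_k>0$.

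Now assume $\K$ is a field and denote the right-hand side of (\ref{alg-beta}) by $\beta^{bd}_k$. The inequality $\beta^{alg}_k\leq\beta^{bd}_k$ is elementary: given $x=d_{-f}w$ and $y\in Im(d_{f,k+1})$ with $\Lambda(x,y)\neq 0$ and any $\beta>\beta^{bd}_k$, one can find $z\in CM^{\ell_f(y)+\beta}_{k+1}(f;\K)$ with $d_f z=y$, so $\Pi(x,z)=\Lambda(x,y)\neq 0$ forces an index-$(k+1)$ critical point $p$ appearing in both $x$ and $z$; this yields $-\ell_{-f}(x)\leq f(p)\leq\ell_f(z)\leq \ell_f(y)+\beta$ and hence $-\ell_{-f}(x)-\ell_f(y)\leq\beta$.

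For the reverse inequality I exploit the fact that, over the field $\K$, the pairing $\Pi$ identifies $CM_{n-k-1}(-f;\K)$ with the linear dual of $CM_{k+1}(f;\K)$. A direct calculation shows that the annihilator of the subspace $CM^\nu_{k+1}(f;\K)$ under $\Pi$ is exactly $\{x\in CM_{n-k-1}(-f;\K):\ell_{-f}(x)<-\nu\}$. Combining this with (\ref{adjpi}) yields the key characterization: for $y\in Im(d_{f,k+1})$, one has $y\in d_f(CM^\nu_{k+1}(f;\K))$ if and only if $\Lambda(x,y)=0$ for every $x\in Im(d_{-f,n-k})$ with $\ell_{-f}(x)<-\nu$. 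Applying this with $\nu=\lambda+\beta^{alg}_k$ for $y\in Im(d_{f,k+1})\cap CM^\lambda_k(f;\K)$: any such $x$ satisfies $-\ell_{-f}(x)-\ell_f(y)>\beta^{alg}_k$, forcing $\Lambda(x,y)=0$ by the definition of $\beta^{alg}_k$; hence $y\in d_f(CM^{\lambda+\beta^{alg}_k}_{k+1}(f;\K))$ and $\beta^{bd}_k\leq\beta^{alg}_k$. The main technical ingredient is this duality characterization, whose ``only if'' direction is routine but whose ``if'' direction requires that a subspace of a finite-dimensional vector space equal its double annihilator; this is precisely where the field hypothesis becomes essential.
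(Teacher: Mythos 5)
Your proof is correct, and through the first part and the inequality $\beta^{alg}_k\leq\beta^{bd}_k$ it matches the paper's strategy: produce a witness $x=d_{-f}q_0$ with $q_0$ realizing $\ell_f(y)$ to get strict positivity, and use $\Pi(x,z)\neq 0$ to force a common index-$(k+1)$ critical point constraining $\ell_{-f}(x)$ by $\ell_f(z)$. Where the two arguments genuinely diverge is the reverse inequality $\beta^{bd}_k\leq\beta^{alg}_k$. The paper passes to the filtered subcomplex $CM^{\lambda+\alpha}_*(f;\K)$ and the quotient complex $CM_*(-f;\K)/CM^{-\lambda-\alpha}_*(-f;\K)$, observes via (\ref{adjpi}) that $\Pi$ induces a perfect pairing between them, and then invokes the universal coefficient theorem over a field to obtain a cycle in the quotient pairing nontrivially with a nonzero class $[y]$; the bounding chain for its lift becomes the desired $x$. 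You instead formulate and use a purely linear-algebraic characterization, namely that for $y\in Im(d_{f,k+1})$ one has $y\in d_f(CM^\nu_{k+1}(f;\K))$ if and only if $\Lambda(x,y)=0$ for every $x\in Im(d_{-f,n-k})$ with $\ell_{-f}(x)<-\nu$; the nontrivial direction rests on the identification $Im(d_{-f,n-k})=(\ker d_{f,k+1})^\perp$ and the double-annihilator equality $((A+\ker d_{f,k+1})^\perp)^\perp=A+\ker d_{f,k+1}$ in a finite-dimensional vector space, which is exactly where the field hypothesis is used. The two approaches encode the same duality; yours avoids the homological packaging (subquotient complexes, UCT) in favor of an annihilator calculation, which is somewhat shorter and makes the role of the field hypothesis more transparent, while the paper's version fits naturally into the language used elsewhere (e.g.\ in Corollary \ref{cormain2}). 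I would only suggest, in a final write-up, that you spell out the chain $(A+\ker d_{f,k+1})^\perp = A^\perp\cap Im(d_{-f,n-k})$ and the double-annihilator step explicitly, since the characterization as you've stated it is the crux of the argument.
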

\begin{proof}  Denote the right-hand side of (\ref{alg-beta}) by $\beta_k(f;\K)$.  Note first that if $d_{f,k+1}=0$ then (for any ring $\K$, not necessarily a field) it follows immediately from the definitions that $\beta^{alg}_{k}(f;\K)=\beta_{k}(f;\K)=0$.  So for the rest of the proof we assume that $d_{f,k+1}\neq 0$; we now show that this implies that $\beta^{alg}_{k}(f;\K)>0$.

Since $d_{f,k+1}\neq 0$ let us choose an element $y=\sum_{i=1}^{l}y_ip_i\in Im(d_{f,k+1})\setminus\{0\}$ (where the $p_i$ are are all distinct).  Reordering the indices if necessary we may assume that $y_1\neq 0$ and $f(p_1)=\ell_{f}(y)$.  Now view $p_1$ as an element of $CM_{n-k}(-f;\K)$ and let $x=d_{-f,n-k} p_1$.    By (\ref{altlam}) we see that \[ \Lambda(x,y)=(-1)^{n-k}\Pi(p_1,y)=(-1)^{n-k}y_1\neq 0.\]  Moreover where $\mu$ is the smallest critical value of $f$ which is strictly larger than $f(p_1)$, one has $\ell_{-f}(x)\leq -\mu$.  Thus \[  -\ell_{-f}(x)-\ell_f(y)\geq \mu-f(p_1)>0.\]  By the definition of $\beta^{alg}_{k}(f;\K)$ this completes the proof of the first sentence of the proposition.

Let us now prove the second sentence of the proposition; in fact our argument will show that $\beta^{alg}_{k}(f;\K)\leq \beta_{k}(f;\K)$ for any ring $\K$, with equality if $\K$ is a field.

Consider any $y\in CM_{k}(f;\K)$ with $0\neq y\in Im (d_{f,k+1})$.   Suppose that $x\in CM_{n-k-1}(-f;\K)$ obeys $\Lambda(x,y)\neq 0$.  Then for any $z\in CM_{k+1}(f;\K)$ such that $d_f z= y$, we have $\Pi(x,z)\neq 0$.  But it is easy to see that the fact that $\Pi(x,z)\neq 0$ implies that $\ell_{-f}(x)+\ell_f(z)\geq 0$, \emph{i.e.}, $-\ell_{-f}(x)\leq \ell_f(z)$.  Thus, for all $y\in Im (d_{f,k+1})\setminus \{0\}$, we have \begin{align}\label{ellell} \sup\{-\ell_{-f}(x)-\ell_{f}(y)&|x\in CM_{n-k-1}(-f;\K), \Lambda(x,y)\neq 0\}\\& \qquad \qquad \leq \inf\{\ell_f(z)-\ell_f(y)|z\in CM_{k+1}(f;\K),\,d_{f,k+1}z=y\}.\nonumber \end{align}

Now since we have already shown that $\beta^{alg}_{k}(f;\K)>0$,  $\beta^{alg}_{k}(f;\K)$ is equal to the supremum of the left-hand side of (\ref{ellell}) over all $y\in Im (d_{f,k+1})\setminus \{0\}$.  On the other hand, given that $d_{f,k+1}\neq 0$, it is easy to see that $\beta_k(f;\K)$ is equal to the supremum of the right-hand side of (\ref{ellell}) over all $y\in Im (d_{f,k+1})\setminus \{0\}$.  Thus taking the suprema of the two sides of (\ref{ellell}) over $y$ establishes that \[ \beta^{alg}_{k}(f;\K)\leq \beta_{k}(f;\K).\]

It remains to prove the reverse inequality, for which we restrict to the case that $\K$ is a field (it is not difficult to construct counterexamples to this inequality when $\K$ is not a field).  Let $\alpha<\beta_k(f;\K)$; we will show that $\beta^{alg}_{k}(f;\K)\geq \alpha$.  For notational convenience we may assume that $\alpha$ is not equal to the difference between any two critical values of  $f$.  By definition there is then some $\lambda\in \R$ and some element $y\in (Im (d_{f,k+1}))\cap CM_{k}^{\lambda}(f;\K)$ such that $y\notin d_{f,k+1}(CM_{k}^{\lambda+\alpha}(f;\K))$; decreasing $\lambda$ if necessary we may assume that $\lambda=\ell(y)$, so that $\lambda$ is a critical value of $f$, and therefore $\lambda+\alpha$ is not a critical value of $f$ by our choice of $\alpha$. Since $y\in Im(d_{f,k+1})$, $y$ is a cycle, but since $y\notin d_{f,k+1}(CM_{k}^{\lambda+\alpha}(f;\K))$, $y$ represents a nontrivial element $[y]$ in the filtered homology $H_k(CM_{*}^{\lambda+\alpha}(f;\K))$.  

  Consider the quotient complex $D^{-\lambda-\alpha}_{*}:=\frac{CM_{*}(-f;\K)}{CM^{-\lambda-\alpha}(-f;\K)}$.  Since $\lambda+\alpha$ is not a critical value of $f$ the Poincar\'e pairing $\Pi$ vanishes on $CM^{-\lambda-\alpha}_{*}(-f;\K)\times CM^{\lambda+\alpha}_{*}(f;\K)$, and descends to a perfect pairing $\underline{\Pi}\co D^{-\lambda-\alpha}_{*}\times CM^{\lambda+\alpha}_{*}(f;\K)\to \K$. Moreover by (\ref{adjpi}) the differential on the quotient complex $D^{-\lambda-\alpha}_{*}$ induced by $d_{-f}$ is (up to a grading-dependent sign) dual via $\underline{\Pi}$ to the differential $d_f$ on $CM^{\lambda+\alpha}_{*}(f;\K)$.  Therefore by the field-coefficient case of the universal coefficient theorem the pairing $\underline{\Pi}$ induces a nondegenerate pairing between the homologies of  $D^{-\lambda-\alpha}_{*}$ and $CM^{\lambda+\alpha}_{*}(f;\K)$.  In particular since our element $y$ is homologically nontrivial in $CM^{\lambda+\alpha}_{*}(f;\K)$ there is a degree-$(n-k)$ \emph{cycle} $\bar{w}\in D^{-\lambda-\alpha}_{*}=\frac{CM_{*}(-f;\K)}{CM^{-\lambda-\alpha}_{*}(-f;\K)}$ which pairs nontrivially with $y$; thus where $w\in CM_{n-k}(-f;\K)$ is a representative of $\bar{w}$ we have $\Pi(w,y)\neq 0$.  Now the fact that $\bar{w}$ is a degree-$(n-k)$ cycle in $D^{-\lambda-\alpha}_{*}$ implies that $x:=d_{-f}w\in CM^{-\lambda-\alpha}_{n-k-1}(-f;\K)$.  By (\ref{altlam}) we have $\Lambda(x,y)=(-1)^{n-k}\Pi(w,y)\neq 0$.    Moreover $\ell_{-f}(x)+\ell_f(y)\leq -\lambda-\alpha+\lambda=-\alpha$.  Thus we have found $x\in Im(d_{-f,n-k})$ and $y\in Im(d_{f,k+1})$ such that $\Lambda(x,y)\neq 0$ and $-\ell_{-f}(x)-\ell_f(y)\geq \alpha$, proving that $\beta^{alg}_{k}(f;\K)\geq\alpha$.  Since $\alpha$ was an arbitrary number smaller than $\beta_k(f;\K)$ (and not equal to the difference between any two critical values of $f$), this implies that \[ \beta^{alg}_{k}(f;\K)\geq \beta_k(f;\K),\] completing the proof.
\end{proof}

\begin{definition} If $f\co M\to\R$ is a Morse function on a compact $n$-dimensional manifold, $\K$ is a ring, and $k\in\{0,\ldots,n-1\}$, the \emph{geometric link separation} of $f$ is \[ \beta_{k}^{geom}(f;\K)=\sup\left\{\min(f|_{\overline{Im(b_-)}})-\max(f|_{\overline{Im(b_+)}}) \left|\begin{array}{c} b_-\co B_-\to M\mbox{ is an $(n-k-1)$-pseudoboundary,}\\ b_+\co B_+\to M  \mbox{ is a $k$-pseudoboundary,}\\ \overline{b_-(B_-)}\cap \overline{b_+(B_+)}=\varnothing,\,lk_{\K}(b_-,b_+)\neq 0\end{array}\right.\right\}.\]
\end{definition}

\begin{remark} \label{betasmooth} If the ring $\K$ has characteristic zero (\emph{i.e.}, if for every nonzero integer $n$ one has $n1\neq 0$ where $1$ is the multiplicative identity in $\K$ and we view $\K$ as a $\mathbb{Z}$-module), then one could restrict the pseudoboundaries $b_{\pm}$ in the definition of $\beta_{k}^{geom}(f;\K)$ to have domains which are compact smooth oriented manifolds.  Indeed this follows easily from two instances of Lemma \ref{smoothen}, applied using appropriately small open sets around $\overline{b_{\pm}(B_{\pm})}$.  In this regard note also that if $B$ is a compact smooth oriented manifold without boundary and $b\co B\to M$ is a smooth map, then it follows from results of \cite{Z} that $b$ is a pseudoboundary if and only if $b_*[B]=0\in H_{*}(M;\Z)$.
\end{remark}

The following is one of our main results.

\begin{theorem}\label{alggeom} For any Morse function $f\co M\to\R$ on a compact $n$-dimensional manifold, any nontrivial ring $\K$, and any $k\in \{0,\ldots,n-1\}$, we have \[ \beta^{alg}_{k}(f;\K)=\beta^{geom}_{k}(f;\K).\]
\end{theorem}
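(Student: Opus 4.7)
The equality splits into two inequalities. The direction $\beta^{alg}_{k}(f;\K)\geq\beta^{geom}_{k}(f;\K)$ is essentially packaged in Proposition \ref{linklinkprop}: let $(b_-,b_+)$ be any pair of pseudoboundaries of the appropriate dimensions with $\overline{Im(b_-)}\cap\overline{Im(b_+)}=\varnothing$ and $lk_\K(b_-,b_+)\neq 0$, and set $\delta=\min(f|_{\overline{Im(b_-)}})-\max(f|_{\overline{Im(b_+)}})$. The case $\delta\leq 0$ is immediate from $\beta^{alg}_{k}(f;\K)\geq 0$, so assume $\delta>0$. After a small smooth perturbation of $b_\pm$ (altering $\delta$ by less than any preassigned $\varepsilon>0$) we may arrange $\overline{Im(b_\pm)}\cap Crit(f)=\varnothing$, so that $(b_+,b_-)\in\mathcal{T}_{k}(M,f)$. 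Choose a Riemannian metric $h$ generic with respect to $f,b_+,b_-$, and set $x=I_{b_-}M_{-f}$, $y=I_{b_+}M_f$. By Proposition \ref{linklinkprop}(i) we have $x\in Im(d_{-f,n-k})$ and $y\in Im(d_{f,k+1})$, and the strict inequality $\delta>0$ combined with Remark \ref{nomassey} kills the correction term in \eqref{linklink}, giving $\Lambda(x,y)=lk_\K(b_-,b_+)\neq 0$. Directly from the definition of $I_{b_+}$, every critical point in the support of $y$ is the forward limit of a negative gradient trajectory of $f$ that meets $Im(b_+)$; since $f$ strictly decreases along such trajectories, $\ell_f(y)\leq\max(f|_{\overline{Im(b_+)}})$, and symmetrically $\ell_{-f}(x)\leq-\min(f|_{\overline{Im(b_-)}})$. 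Adding these yields $-\ell_{-f}(x)-\ell_f(y)\geq\delta-\varepsilon$ for every $\varepsilon>0$, so $\beta^{alg}_{k}(f;\K)\geq\delta$; taking the supremum over $(b_-,b_+)$ establishes the inequality.

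For the reverse inequality, assume first $\beta^{alg}_{k}(f;\K)>0$ and fix $0<\alpha<\beta^{alg}_{k}(f;\K)$, so that we may pick $x\in Im(d_{-f,n-k})$, $y\in Im(d_{f,k+1})$ with $\Lambda(x,y)\neq 0$ and $-\ell_{-f}(x)-\ell_f(y)>\alpha$. Writing $y=d_fz$ and $x=d_{-f}w$, the forthcoming Lemma \ref{chainconstruct} of Section \ref{crittolink} will produce pseudochains $c_+\co C_+\to M$ and $c_-\co C_-\to M$, built essentially as formal $\K$-combinations of the compactified unstable manifolds $\bar{W}^u_f(p)$ (for $p\in\mathrm{supp}(z)$) and $\bar{W}^u_{-f}(q)=\bar{W}^s_f(q)$ (for $q\in\mathrm{supp}(w)$), weighted by the coefficients of $z,w$. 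From \eqref{delu} and \eqref{delws} the boundaries $b_\pm:=c_\pm|_{\partial C_\pm}$ are pseudoboundaries whose images lie in $\bigcup_{r\in\mathrm{supp}(y)}\overline{W^u_f(r)}$ and $\bigcup_{r\in\mathrm{supp}(x)}\overline{W^s_f(r)}$ respectively, so that $\max(f|_{\overline{Im(b_+)}})\leq\ell_f(y)$, $\min(f|_{\overline{Im(b_-)}})\geq-\ell_{-f}(x)$, and in particular $\min(f|_{\overline{Im(b_-)}})-\max(f|_{\overline{Im(b_+)}})>\alpha>0$ (the strict $f$-separation also forces the closures of the images to be disjoint). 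The crucial additional content expected from Lemma \ref{chainconstruct} is a chain-level analogue of \eqref{linklink}: with this construction $lk_\K(b_-,b_+)$ equals a signed copy of $\Lambda(x,y)$ modulo the correction $\Pi(M_{-f},I_{b_+,b_-}M_f)$. Because the strict $f$-separation again makes the correction vanish by Remark \ref{nomassey}, this gives $lk_\K(b_-,b_+)=\pm\Lambda(x,y)\neq 0$, so $(b_-,b_+)$ witnesses $\beta^{geom}_{k}(f;\K)\geq\alpha$; letting $\alpha\nearrow\beta^{alg}_{k}(f;\K)$ completes this case. The edge case $\beta^{alg}_{k}(f;\K)=0$ reduces via Proposition \ref{alg-betaprop} to $d_{f,k+1}=0$, and then Proposition \ref{linklinkprop}(i)--(ii) combined with Remark \ref{nomassey} forces every linked pair to have $\min-\max\leq 0$; a short ambient-isotopy argument bringing the images of any linked pair arbitrarily close to a common level set shows the supremum is $0$, yielding the matching $\beta^{geom}_{k}(f;\K)=0$.

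The principal obstacle is Lemma \ref{chainconstruct} itself. Producing pseudochains from arbitrary integer Morse chains (and hence over any ring $\K$ via the unital map $\Z\to\K$) with the additional chain-level property that their linking numbers compute $\pm\Lambda(x,y)$ up to the $\Pi$-correction requires exploiting the smooth manifold-with-corners structure of the compactified unstable manifolds from \cite{BH}, together with a refinement of the pseudocycle construction of \cite{S99} that is compatible with the Morse boundary operator at the chain level. A careful sign analysis using the fiber-product orientation conventions of Section \ref{or}, together with a perturbation argument in the spirit of Proposition \ref{tvs1} to suppress unintended contributions from critical points appearing on the stratified boundary $\partial\bar{W}^u(p)$, will be the technically demanding ingredients.
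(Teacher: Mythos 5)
Your handling of the inequality $\beta^{alg}_{k}(f;\K)\geq\beta^{geom}_{k}(f;\K)$ matches the paper's argument. For the reverse inequality your overall plan is the right one, but there are two genuine gaps, plus an awkward handling of the edge case.

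First, you speak of pseudochains ``built essentially as formal $\K$-combinations of the compactified unstable manifolds.'' Over a general ring $\K$ this makes no sense geometrically: a pseudochain is a map from an actual smooth manifold, and you cannot weight a manifold by an element of $\K$. The paper handles this with an explicit reduction to integer coefficients (Lemma \ref{extboundary}): any $y\in Im(d^{\K}_{f,k+1})$ can be written as $\sum_i z_i\otimes r_i$ with $z_i\in Im(d^{\Z}_{f,k+1})$ and $\ell_f(z_i)\leq\ell_f(y)$, and likewise for $x$; one then picks integral summands $z_{-,i_0},z_{+,j_0}$ with $\ep_{\K}\bigl(\Lambda_{\Z}(z_{-,i_0},z_{+,j_0})\bigr)\neq 0$. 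Only for $\Z$-chains can Lemma \ref{chainconstruct} assemble a pseudochain from disjoint oriented copies of unstable manifolds. This reduction step is missing from your sketch and cannot be elided.

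Second, what you call ``the crucial additional content expected from Lemma \ref{chainconstruct}'' --- a chain-level analogue of (\ref{linklink}) in which $lk_{\K}(b_-,b_+)=\pm\Lambda(x,y)$ modulo a correction killed by Remark \ref{nomassey} --- is not how the paper proceeds, and if you tried to deduce it from Proposition \ref{linklinkprop} applied to the constructed $b_\pm$, you would obtain only $\Lambda(I_{b_-}M_{-f},I_{b_+}M_f)=lk_{\K}(b_-,b_+)$, leaving the nontrivial task of relating $\Lambda(I_{b_-}M_{-f},I_{b_+}M_f)$ to the original $\Lambda(x,y)$. The paper's Proposition \ref{twoboundaries}(ii) instead computes $lk(b_-,b_+)=\Lambda(d_{-f}a_-,d_fa_+)$ directly as a fiber-product count, using the fact that $\overline{\alpha_{a_+}(Y_{a_+})}$ sits in unstable manifolds of index $\leq k+1$ while $\overline{Im(b_-)}$ sits in stable manifolds of index $\geq k+1$; the Morse--Smale condition then reduces this count to the single transverse points $W^u_f(p_i)\cap W^s_f(q_j)=\{p_i\}$ when $p_i=q_j$, giving exactly the diagonal sum $\Pi(z_-,a_+)=\Lambda(z_-,z_+)$. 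No invocation of (\ref{linklink}) or of the correction term is needed; indeed Proposition \ref{twoboundaries}(iii) shows that after composing with a generic $C^1$-small diffeomorphism, $I_{b_+,b_-}$ vanishes identically.

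Finally, the edge case $\beta^{alg}_{k}=0$ is handled more cleanly than your sketch suggests: the paper simply observes at the outset that $\beta^{geom}_{k}(f;\K)\geq 0$ unconditionally, by building an explicit linked pair $\partial B^{k+1}\sqcup\partial B^{n-k}$ inside an arbitrarily small coordinate ball, which yields $\min - \max > -\epsilon$ for any $\epsilon>0$. Your ambient-isotopy sketch still requires an existence statement for at least one linked pair, and that is exactly this construction; the claim that ``every linked pair has $\min-\max\leq 0$'' belongs to the other inequality and plays no role here.
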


We will prove the inequality ``$\geq$'' in Theorem \ref{alggeom} now, and the reverse inequality in the following section.

\begin{proof}[Proof that $\beta^{alg}_{k}(f;\K)\geq \beta^{geom}_{k}(f;\K)$]  Suppose that $\alpha<\beta^{geom}_{k}(f;\K)$.  There are then an $(n-k-1)$-pseudoboundary $b_-\co B_-\to M$ and a $k$-pseudoboundary $b_+\co B_+\to M$  such that $\overline{b_-(B_-)}\cap \overline{b_+(B_+)}=\varnothing$, $lk_{\K}(b_-,b_+)\neq 0$, and $\min (f|_{\overline{b_-(B_-)}}) - \max (f|_{\overline{b_+(B_+)}}) >\alpha$.  By replacing $b_-$ and $b_+$ by $\phi\circ b_-$ and $\phi\circ b_+$ where $\phi$ is an appropriately-chosen diffeomorphism which is close to the identity, we may arrange that the above properties still hold and additionally $\overline{b_-(B_-)}\cap Crit(f)=\overline{b_+(B_+)}\cap Crit(f)=\varnothing$.  

We intend to show that $\beta^{alg}_{k}(f;\K)> \alpha$.  If $\alpha< 0$ this is obvious, since by definition $\beta^{alg}_{k}(f;\K)\geq 0$, so assume $\alpha\geq  0$.  So $\min f|_{\overline{b_-(B_-)}}>\max f|_{\overline{b_+(B_+)}}$, and (with respect to a suitably generic metric in order to define the relelvant operations on the Morse complex $CM_*(f;\K)$) we may apply Proposition \ref{linklinkprop}.  This gives elements $I_{b_-} M_{-f}\in Im(d_{-f,n-k})$ and $I_{b_+} M_f\in Im(d_{f,k+1})$ such that $\Lambda(I_{b_-} M_{-f},I_{b_+}M_f)=lk_{\K}(b_-,b_+)\neq 0$ (the other term in (\ref{linklink}) vanishes by Remark \ref{nomassey}).  Now the fact that $f$ decreases along its negative gradient flowlines is easily seen to imply that \[ \ell_f(I_{b_+} M_f)< \max f|_{\overline{b_+(B_+)}},\] since the critical points contributing to the Morse chain $I_{b_+} M_f$ are the limits in positive time of negative gradient flowlines of $f$ that pass through the image of $b_+$.  Similarly we have \[ \ell_{-f}(I_{b_-} M_{-f})< \max \left(-f|_{\overline{b_-(B_-)}}\right)=-\min f|_{\overline{b_-(B_-)}}.\] Thus \[ -\ell_{-f}(I_{b_-} M_{-f})-\ell_f(I_{b_+} M_f)>\min f|_{\overline{b_-(B_-)}}-\max f|_{\overline{b_+(B_+)}}>\alpha.\]  Since $I_{b_-} M_{-f}$ and $I_{b_+} M_f$ have nontrivial linking pairing over $\K$ this shows that $\beta^{alg}(f;\K)>\alpha$.  So since $\alpha$ was an arbitrary nonnegative number smaller than $\beta^{geom}_{k}(f;\K)$ this proves that $\beta^{alg}_{k}(f;\K)\geq \beta^{geom}_{k}(f;\K)$.
 \end{proof}

\section{From critical points to linked pseudoboundaries}\label{crittolink}

We now turn attention to the proof of the inequality $\beta^{alg}\leq \beta^{geom}$ in Theorem \ref{alggeom}, and to the implications ``(i)$\Rightarrow$(ii)'' in Theorems \ref{main1} and \ref{main2}. Throughout this section we fix a Morse function $f\co M\to\R$ where $M$ is a compact $n$-dimensional manifold without boundary, and we fix a Riemannian metric $h$ such that the gradient flow of $f$ with respect to $h$ is Morse--Smale; we moreover assume that the pair $(f,h)$ is \emph{locally trivial} in the sense that around each critical point $p$ there are coordinates $(x_1,\ldots,x_n)$ such that $f(x_1,\ldots,x_n)=f(p)-\sum_{i=1}^{k}x_{i}^{2}+\sum_{i=k+1}^{n}x_{i}^{2}$ and such that $h$ is given by the standard Euclidean metric in some coordinate ball around the origin.  Metrics which simultaneously have this local triviality property and make the gradient flow of $f$ Morse--Smale exist in abundance by \cite[Proposition 2]{BH} (note that, in constrast to our usage, the definition of ``Morse--Smale'' that is used in \cite{BH} already incorporates the local triviality property).  Our purpose in assuming  local triviality is that, by \cite[Theorem 1(2)]{BH}, it guaranteees that the standard broken-flowline compactification of the unstable manifolds is a smooth manifold with corners (indeed, with faces), with the evaluation map extending smoothly up to the corners.

\subsection{Manifolds with corners}
Let us briefly recall some facts about manifolds with corners; see \cite{Do},\cite[Section 1.1]{J} for more details.  An $n$-dimensional smooth manifold with corners is by definition a second-countable Hausdorff space $X$ locally modeled on open subsets of $[0,\infty)^n$, with smooth transition functions.   For $x\in X$ and a coordinate patch $\phi\co U\to [0,\infty)^n$ with $x\in U$, the number of coordinates of $\phi(x)$ which are equal to $0$ is independent of the choice of coordinate patch $\phi$, and will be denoted by $c(x)$.  For $k\in\{0,\ldots,n\}$, the subset $\partial^{{}^{\circ}k} X =\{x\in X|c(x)=k\}$ is an $(n-k)$-dimensional smooth manifold.  One has $\overline{\partial^{{}^{\circ}k} X}=\cup_{l=k}^{n}\partial^{{}^{\circ}l}X$, and $\partial^{{}^{\circ}k} X$ is open as a subset of $\overline{\partial^{{}^{\circ}k} X}$.  Of course, $X\setminus \cup_{k\geq 2}\partial^{{}^{\circ}k} X$ is naturally a manifold with boundary.

We intend to build pseudochains and pseudoboundaries out of maps defined on manifolds with corners; since both of the former have domains which do not have corners the following will be useful.

\begin{lemma}\label{resolve}  Let $X$ be an $n$-dimensional manifold with corners such that $\partial^{{}^{\circ}k} X=\varnothing$ for all $k\geq 3$.  Then there is a smooth manifold with boundary $X'$ and a smooth homeomorphism $\pi\co X'\to X$ which restricts to $\pi^{-1}(X\setminus \partial^{{}^{\circ}2} X)$ as a diffeomorphism between $\pi^{-1}(X\setminus \partial^{{}^{\circ}2} X)$ and $X\setminus \partial^{{}^{\circ}2} X$.
\end{lemma}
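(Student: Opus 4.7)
The plan is to keep $X'$ equal to $X$ as a topological space while equipping it with a new smooth structure that erases the codimension-two stratum. Since all corners of $X$ have codimension at most two, $\partial^{{}^{\circ}2} X$ is a smooth closed $(n-2)$-dimensional submanifold, and a neighborhood of each point of it is modeled on $[0,\infty)^2 \times \R^{n-2}$ with the corner at the origin of the $[0,\infty)^2$ factor. The problem therefore reduces to producing a smooth homeomorphism $\pi_0 \co \R \times [0,\infty) \to [0,\infty)^2$ that is a diffeomorphism off the single point $(0,0)$; one then uses charts of the form $(\pi_0 \times \mathrm{id})^{-1} \circ \varphi_\alpha$ to re-atlas a neighborhood of $\partial^{{}^{\circ}2} X$ as a manifold with boundary, keeping the rest of the smooth structure of $X$ unchanged.

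My proposed local model is
\[
\pi_0(u,v) = \bigl(\phi(u) + v e^{u},\ \phi(-u) + v e^{-u}\bigr),
\]
where $\phi \co \R \to [0,\infty)$ is smooth, vanishes identically on $(-\infty, 0]$, is strictly increasing and surjective from $[0,\infty)$ onto $[0,\infty)$, and has all derivatives vanishing at the origin (for instance $\phi(u) = e^{u - 1/u}$ for $u>0$). There will be three things to check: (i) $\pi_0$ is a bijection onto $[0,\infty)^2$, since for $(x,y)$ with $x \geq y$ the unique preimage has $u \geq 0$, $v = y e^{u}$, and $u$ determined by $\phi(u) + y e^{2u} = x$ (strictly increasing in $u \geq 0$, ranging from $y$ to $\infty$), and symmetrically for $x \leq y$; (ii) the inverse is continuous even at the corner since $\phi(u) \to 0$ and $v e^{\pm u} \to 0$ jointly force $(u,v) \to (0,0)$; and (iii) the Jacobian determinant of $\pi_0$ equals $\phi'(u) e^{-u} + \phi'(-u) e^{u} + 2v$, which is strictly positive off $(0,0)$, so $\pi_0$ is a genuine diffeomorphism away from that point.

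The globalization step consists of choosing a cover of a neighborhood of $\partial^{{}^{\circ}2} X$ by corner charts, applying $(\pi_0 \times \mathrm{id})^{-1}$ to each, and gluing to the unchanged manifold-with-boundary structure on $X \setminus \partial^{{}^{\circ}2} X$. The transition functions on overlaps that avoid $\partial^{{}^{\circ}2} X$ are automatically smooth because $\pi_0$ is a diffeomorphism there. In the intended applications the relevant $X$ carry a manifold-with-faces structure (the two boundary facets incident to $\partial^{{}^{\circ}2} X$ being globally labelled), which makes the requisite coherent choice of trivialization straightforward; the identity map $\pi \co X' \to X$ is then a smooth homeomorphism whose restriction over $X \setminus \partial^{{}^{\circ}2} X$ is a diffeomorphism.

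The hard part is the construction of $\pi_0$. Smoothness of $\pi_0$ at the corner forces the boundary parameterization $u \mapsto (\phi(u), \phi(-u))$ to have all derivatives vanishing at $u=0$, so that the boundary can bend smoothly from one coordinate ray into the other; this flatness naively threatens to make the Jacobian of $\pi_0$ degenerate along the entire line $\{u = 0\}$, not just at the corner point. The role of the asymmetric prefactors $e^{u}$ and $e^{-u}$ multiplying $v$ is to contribute a Wronskian-like $+2v$ term to the Jacobian that vanishes only at $v=0$, thereby isolating the singularity to the single point $(0,0)$ and ensuring that $\pi_0$ is a diffeomorphism everywhere else.
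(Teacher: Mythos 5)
Your local model is correct and genuinely different from the one in the paper. Your map $\pi_0(u,v) = \bigl(\phi(u) + v e^{u},\, \phi(-u) + v e^{-u}\bigr)$ works in ``graph'' coordinates, whereas the paper uses a polar fold $\phi(re^{i\theta}) = \beta(r)\,e^{\frac{i}{2}(\theta + \pi/4)}$ from the half-plane onto the quadrant. Both are smooth homeomorphisms onto $Q=[0,\infty)^2$ that fail to be diffeomorphisms only at the origin, and both are equivariant under the swap $(x,y)\mapsto(y,x)$ (for yours, via $u\mapsto -u$). Your Jacobian computation $\phi'(u)e^{-u}+\phi'(-u)e^{u}+2v$ is correct, and the observation about the role of the $e^{\pm u}$ prefactors is a nice one.

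However, the globalization step has a genuine gap. The transition functions you must control are precisely those between two corner charts whose overlap meets $\partial^{{}^{\circ}2}X$, and you have not argued that conjugating such a transition $\tau$ by $(\pi_0\times\mathrm{id})^{-1}$ yields a smooth map. Asserting this is ``straightforward'' given a manifold-with-faces structure does not settle it: even with the two boundary hypersurfaces labelled consistently, $\tau$ can be an essentially arbitrary corner-preserving diffeomorphism of $[0,\infty)^2\times\R^{n-2}$ (for instance one that rescales each normal direction by a positive function), and whether $\pi_{0}^{-1}\circ\tau\circ\pi_0$ remains smooth -- let alone a diffeomorphism -- at the corner locus hinges on delicate cancellations that the flatness of $\phi$ does not obviously supply. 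The paper sidesteps this entirely: it first applies Douady's tubular neighborhood theorem for manifolds with corners to produce a \emph{single global} diffeomorphism $\Psi$ from a neighborhood $N$ of $\partial^{{}^{\circ}2}X$ to the normal cone bundle $B=P\times_G Q$, and only then replaces $Q$ by the half-plane $\mathbb{H}$ fiberwise. After that straightening, the only transitions the local model must tolerate are those of the structure group $G\cong\Z/2\Z$, which are exactly what the equivariance of the fold absorbs. To complete your chart-by-chart argument you would need the analogous normal-form input (Douady's theorem, or an explicit collar for $\partial^{{}^{\circ}2}X$) so that the transitions reduce to ones commuting with $\pi_0$. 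Note also that the lemma is stated for arbitrary manifolds with corners with $\partial^{{}^{\circ}k}X=\varnothing$ for $k\geq 3$, without a manifold-with-faces hypothesis, so appealing to the faces structure of the intended applications leaves the lemma itself unproven.
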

 (Of course, if $\partial^{{}^{\circ}2} X\neq \varnothing$, the inverse $\pi^{-1}$ must not be smooth.)

\begin{proof}

The manifold $X'$ will be formed by removing $\partial^{{}^{\circ}2} X$ and then gluing in a smooth manifold with boundary which is homeomorphic to a tubular neighborhood of $\partial^{{}^{\circ}2}X$.

In this direction, note that the structure group of the normal bundle $E$ to $\partial^{{}^{\circ}2} X$ reduces to that subgroup $G$ of $O(2)$ which preserves the quadrant $\{(x,y)\in \R^2|x\geq 0,y\geq 0\}$.   Of course $G$ is just given by $G=\left\{\left(\begin{array}{cc}1 & 0\\ 0& 1\end{array}\right),\left(\begin{array}{cc}0 & 1\\ 1& 0\end{array}\right)\right\}$. In other words, there is a principal $G$-bundle $P\to \partial^{{}^{\circ}2} X$ with $E$ given as the associated bundle \[ E=P\times_G \R^2=\frac{P\times \R^2}{(pg,v)\sim (p,gv)} \] (Geometrically, given a Riemannian metric on $X$, the fiber of $P$ over a point $x\in\partial^{{}^{\circ}2} X$ can be identified with the pair of unit vectors which are normal to $\partial^{{}^{\circ}2}X$ and tangent to $\overline{\partial^{{}^{\circ}1} X}$.)  

Write $Q=\{(x,y)\in\R^2|x\geq 0,y\geq 0\}$ and $\mathbb{H}=\{(x,y)\in \R^2|x+y\geq 0\}$, so the standard action of $G$ on $\R^2$ restricts to actions on both $Q$ and $\mathbb{H}$. Moreover, there exists a $G$-equivariant smooth homeomorphism $\phi\co \mathbb{H}\to Q$ with $\phi(0,0)=(0,0)$ such that $\phi|_{\mathbb{H}\setminus\{(0,0)\}}$ is a diffeomorphism; for instance, identifying $\R^2$ with $\mathbb{C}$, one can use the map \[ \phi(re^{i\theta})=\beta(r)e^{\frac{i}{2}\left(\theta+\frac{\pi}{4}\right)} \quad\left(\mbox{for}-\frac{\pi}{4}\leq\theta\leq \frac{3\pi}{4}\right),\] where $\beta\co \R\to\R$ is a smooth surjective map with $\beta'(r)>0$ for all $r\neq 0$ such that $\beta$ vanishes to infinite order at $r=0$.

Now the normal cone to $\partial^{{}^{\circ}2} X$ in $X$ (\emph{i.e.}, the subset of the normal bundle $E$ consisting of tangent vectors $\gamma'(0)$ to smooth curves $\gamma\co [0,1)\to X$ with $\gamma(0)\in\partial^{{}^{\circ}2} X$) is naturally identified with the associated bundle $B=P\times_G Q$ over $\partial^{{}^{\circ}2} X$, with fiber the quadrant $Q$.  By a special case of \cite[Th\'eor\`eme 1]{Do}, there is a neighborhood $N\subset X$ of $\partial^{{}^{\circ}2} X$ and a diffeomorphism $\Psi\co N\to B$, which restricts to $\partial^{{}^{\circ}2} X$ as the standard embedding of the zero-section.

Now form the associated bundle $C=P\times_G \mathbb{H}$; this has an obvious manifold-with-boundary structure, with $\partial C=\{[p,(x,y)]\in C|x+y=0\}$.  Where $\phi\co\mathbb{H}\to Q$ is as above, the $G$-equivariance of $\phi$ implies that we have a well-defined map $\tilde{\phi}\co C\to B$ defined by $\tilde{\phi}[p,h]=[p,\phi(h)]$; evidently $\tilde{\phi}$ is a smooth homeomorphism which restricts to the complement of $\{[p,(0,0)]\}\subset C$ as a diffeomorphism to the complement of $\{[p,(0,0)]\}\subset B$.

The assumption that $\partial^{{}^{\circ}k} X=\varnothing$ for all $k\geq 3$ implies that $\partial^{{}^{\circ}2} X$ is a closed subset of $X$.  We now define \[ X'=\frac{C\coprod \left(X\setminus \partial^{{}^{\circ}2} X\right)}{c\sim \Psi^{-1}(\tilde{\phi}(c))\mbox{ if }\tilde{\phi}(c)\in \Psi(N\setminus \partial^{{}^{\circ}2} X)}.\]  Since $\Psi^{-1}\circ \tilde{\phi}$ restricts to the open set $\tilde{\phi}^{-1}(\Psi(N\setminus\partial^{{}^{\circ}2} X))\subset C$ as a diffeomorphism to its image, which is open in $X$, and since $C$ and $X\setminus \partial^{{}^{\circ}2} X$ are both manifolds with boundary (and without corners), $X'$ inherits the structure of a manifold with boundary from $C$ and $X\setminus \partial^{{}^{\circ}2}X$.  The desired map $\pi\co X'\to X$ is then obtained by setting $\pi$ equal to $\Psi^{-1}\circ\tilde{\phi}$ on $C$ and equal to the inclusion on $X\setminus\partial^{{}^{\circ}2}X$.  
\end{proof} 

If $X$ is a manifold with corners, following \cite{J}, a \emph{connected face} of $X$ is by definition the closure of a connected component of $\partial^{{}^{\circ}1}X$.  $X$ is then said to be a \emph{manifold with faces} if every point $x\in X$ belongs to $c(x)$ distinct connected faces (said differently, if $U$ is a small connected coordinate neighborhood of $x$ then the inclusion-induced map $\pi_0(U\cap\partial^{{}^{\circ}1}X)\to \pi_0(\partial^{{}^{\circ}1}X)$ should be injective). A \emph{face} of a manifold with faces is a (possibly empty) union of pairwise disjoint faces. If $X$ is a manifold with faces and if $F\subset X$ is a face then $F$ inherits the structure of a manifold with corners,  with $\partial^{{}^{\circ}k}F=F\cap \partial^{{}^{\circ}k+1}X$.  

\begin{lemma}\label{fuse} Let $X$ be a manifold with faces, let $F_-,F_+\subset X$ be two disjoint faces of $X$, and let $\phi\co F_-\to F_+$ be a diffeomorphism.  Then the topological space \[ X^{\phi}=\frac{X}{x\sim\phi(x)\mbox{ if }x\in F_-} \] may be endowed with the structure of a smooth manifold with corners in such a way that, where $\pi\co X\to X^{\phi}$ is the quotient projection, for any other smooth manifold $Y$ and any smooth map $g\co X\to Y$ such that $g(x)=g(\phi(x))$ for all $x\in F_-$, the unique map $\bar{g}\co X^{\phi}\to Y$ obeying  $g=\bar{g}\circ \pi$ is smooth.  The corner strata of $X^{\phi}$ are determined by \[ \overline{\partial^{{}^{\circ}k}X^{\phi}}=\overline{\pi(\partial^{{}^{\circ}k}X\setminus(F_-\cup F_+))}.\]

Moreover, if $X$ is oriented and if $\phi\co F_-\to F_+$ is orientation-reversing with respect to the induced boundary orientations on $F_{\pm}$, then $X^{\phi}$ carries an orientation such that $\pi|_{ X\setminus(F_-\cup F_+)}$ is an orientation-preserving diffeomorphism onto its image.
\end{lemma}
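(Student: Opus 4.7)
The plan is to construct the smooth manifold-with-corners structure on $X^{\phi}$ by gluing disjoint collar neighborhoods of $F_{\pm}$ along $\phi$, employing the same infinite-order flattening device used in the proof of Proposition \ref{indep} to guarantee that smooth functions on $X$ which agree after the identification descend smoothly to $X^{\phi}$. First I would invoke a collar neighborhood theorem for manifolds with faces (in the spirit of the results discussed in \cite{Do} and \cite{J}) to obtain smooth embeddings $c_-\co F_-\times[0,\ep)\to X$ and $c_+\co F_+\times[0,\ep)\to X$ onto open neighborhoods of $F_-$ and $F_+$, with $c_\pm(x,0)=x$ and respecting the corner stratification (so a point $(x,t)$ lies in $\partial^{{}^{\circ}j}(F_\pm\times[0,\ep))$ exactly when $c_\pm(x,t)\in\partial^{{}^{\circ}j}X$). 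Since $F_-$ and $F_+$ are disjoint closed faces, one can shrink $\ep$ to arrange that the images of $c_-$ and $c_+$ are disjoint.

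Next I would fix a smooth homeomorphism $\beta\co[0,\ep)\to[0,\ep)$ that equals the identity for $t>\ep/2$, has $\beta'(t)>0$ for $t>0$, and vanishes to infinite order at $0$, in direct analogy with the function $\beta$ constructed in the proof of Proposition \ref{indep}. Define $\psi\co F_-\times(-\ep,\ep)\to X^{\phi}$ by setting $\psi(x,t)=\pi(c_-(x,\beta(-t)))$ for $t\leq 0$ and $\psi(x,t)=\pi(c_+(\phi(x),\beta(t)))$ for $t\geq 0$; the two formulas agree at $t=0$ because $\pi(x)=\pi(\phi(x))$ for $x\in F_-$. I would then declare the smooth structure on $X^{\phi}$ to be the unique one for which $\psi$, together with the restrictions of $\pi$ to $X\setminus F_-$ and $X\setminus F_+$, are local diffeomorphisms; compatibility on overlaps is automatic where $t\neq 0$ since $\beta$ is a smooth diffeomorphism there, and charts of this form at varying points of $F_-$ together with the inherited interior charts on $X\setminus(F_-\cup F_+)$ cover $X^{\phi}$.

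For the universal property, given a smooth $g\co X\to Y$ with $g(x)=g(\phi(x))$ for $x\in F_-$, the induced $\bar g$ pulled back by $\psi$ equals $g\circ c_-(x,\beta(-t))$ for $t\leq 0$ and $g\circ c_+(\phi(x),\beta(t))$ for $t\geq 0$; both halves are smooth, they agree in value at $t=0$, and because $\beta$ vanishes to infinite order at $0$ all one-sided partial derivatives in $t$ vanish there, so the one-sided Taylor jets agree trivially and $\bar g\circ\psi$ is $C^\infty$ across $\{t=0\}$. The corner-strata identity is read off from $\psi$: the corner level of $\psi(x,t)$ equals the corner level of $x$ in $F_-$, so by the definition $\partial^{{}^{\circ}j}F_-=F_-\cap\partial^{{}^{\circ}(j+1)}X$ a point of $F_\pm$ at corner level $k+1$ in $X$ acquires corner level $k$ in $X^{\phi}$, while points of $\partial^{{}^{\circ}k}X$ outside $F_-\cup F_+$ keep their corner level; taking closures of both sides yields the stated equality. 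For the orientation, I would compare pullbacks by $\psi$ directly: the outward-normal-first convention shows that in each collar the product orientation on $F_\pm\times[0,\ep)$ differs from $\mathrm{or}(X)$ by the sign $(-1)^n$ relative to the boundary orientation of $F_\pm$, and the pullback of $\mathrm{or}(X)$ via $\psi$ to $F_-\times(-\ep,0)$ picks up a sign from the reflection $t\mapsto -t$ while the pullback to $F_-\times(0,\ep)$ is transported through $\phi$; the hypothesis that $\phi$ reverses boundary orientations is exactly what makes these two pullbacks coincide as orientations on $F_-\times(-\ep,\ep)$, and the common orientation is then declared to be that of $X^{\phi}$ on the chart $\psi$.

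The hard part is the universal property: one cannot take $\psi$ to be the naive identification with a product collar, because a smooth function on $X$ agreeing on $F_\pm$ need only match to zeroth order across the gluing, and a generic transverse derivative would fail to match. The flattening reparameterization by $\beta$ is essential and is imported wholesale from the construction in the proof of Proposition \ref{indep}; with it, every one-sided Taylor jet of a glued function becomes identically zero, so $C^\infty$ agreement comes for free.
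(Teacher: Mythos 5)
Your proof takes essentially the same approach as the paper's: tubular neighborhoods of $F_\pm$ furnished by Douady's theorem, reparametrized by a flattening homeomorphism $\beta$ that vanishes to infinite order at $0$, with smoothness of the glued structure and of the induced maps $\bar g$ following because all one-sided $t$-derivatives across the seam are forced to vanish. One small technical point worth flagging: as written, the claim that $\pi|_{X\setminus F_-}$ and $\pi|_{X\setminus F_+}$ should be local diffeomorphisms cannot be correct, since $\pi|_{X\setminus F_-}$ is a continuous bijection onto \emph{all} of $X^\phi$ whose inverse is discontinuous along $\pi(F_+)$ (a sequence in $U_-\setminus F_-$ converging to $F_-$ has $\pi$-image converging to $\pi(F_+)$, but its $\pi|_{X\setminus F_-}^{-1}$-image converges to $F_-$, not $F_+$), so it is not even a homeomorphism onto its image; the chart you actually need, and which you correctly name in your closing sentence, is $\pi$ restricted to $X\setminus(F_-\cup F_+)$, exactly as in the paper.
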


\begin{proof}
The faces $F_{\pm}$ are, in the sense of \cite{Do}, submanifolds without relative boundary of $X$ having coindex and codimension both equal to $1$; consequently the tubular neighborhood theorem \cite[Th\'eor\`eme 1]{Do} applies to give diffeomorphisms $\Phi_{\pm}\co (-1,0]\times F_{\pm}\to U_{\pm}$ where $U_{\pm}$ is a neighborhood of $F_{\pm}$ with $U_+\cap U_-=\varnothing$, $\Phi_{\pm}|_{\{0\}\times F_{\pm}}$ restricts as the identity map to $F_{\pm}$, and $(-1,0]\times F_{\pm}$ is endowed with its obvious product manifold-with-corners structure.  If $X$ is oriented then $\Phi_{\pm}$ will necessarily be orientation preserving with respect to the standard product orientation on $(-1,0]\times F_{\pm}$.  

Given these tubular neighborhoods, the lemma is a straightforward generalization of a standard gluing construction from the theory of manifolds without corners; we briefly indicate the argument, leaving details to the reader.  Let $V=(-1,1)\times F_-$ and $F=\{0\}\times F_-\subset V$.  Let $\beta\co (-1,1)\to (-1,1)$  be a smooth homeomorphism such that $\beta(t)=t$ for $|t|>1/2$, $\beta'(t)>0$ for all $t\neq 0$, and $\beta$ vanishes to infinite order at $t=0$.  We can then define a diffeomorphism $\Psi\co V\setminus F\to (U_-\setminus F_-)\cup (U_+\setminus F_+)$ by $\Psi(t,x)=\Phi_-(\beta(t),x)$ for $t<0$ and $\Psi(t,x)=\Phi_+(-\beta(t),\phi(x))$ for $t>0$.  Then \[ \frac{(X\setminus (F_-\cup F_+))\coprod V}{v\sim \Psi(v)\mbox{ for }v\in V\setminus F} \] inherits the structure of a smooth manifold with corners, and is clearly homeomorphic to $X^{\phi}$.  The various required properties are easy to check; we just note that, if $g\co X\to Y$ is a smooth map with $g|_{F_-}=g\circ \phi$, then the induced map $\bar{g}\co X^{\phi}\to Y$ restricts to $V$ as the map \[ (t,x)\mapsto\left\{\begin{array}{ll} (g\circ \Phi_-)(\beta(t),x) & \mbox{ if }t\leq 0, \\ (g\circ\Phi_+)(-\beta(t),\phi(x)) & \mbox{ if } t\geq 0.\end{array}\right.\]  This map is smooth along $F$ by virtue of the facts that $g|_{F_{\pm}}$ is smooth and that $\beta$ vanishes to infinite order at $t=0$, so that the derivatives of all orders of $\bar{g}$ in directions normal to $F$  vanish as well.
\end{proof}

\subsection{Constructing pseudochains from Morse chains} 
Our Morse--Smale pair $(f,h)$ where $h$ is locally trivial 
determines Morse complexes $CM_{*}(\pm f;\K)$ and stable and unstable manifolds $W^{s}_{f}(p)=W^{u}_{-f}(p)$ and $W^{u}_{f}(p)=(-1)^{|p|_f(n-|p|_f)}W^{s}_{-f}(p)$, oriented as in Section \ref{or:morse}.  We intend to construct, for any given pair $b_-\in d_{f,n-k}(CM_{n-k}(-f;\K))$, $b_+\in d_{f,k+1}(CM_{k+1}(f;\K))$ with $\Lambda(b_-,b_+)\neq 0$, a corresponding pair of pseudoboundaries $\beta_-\co B_-\to M$, $\beta_+\co B_+\to M$ such that $lk_{\K}(\beta_-,\beta_+)=\Lambda(b_-,b_+)$ and $\min(f|_{\overline{\beta_-(B_-)}})-\max(f|_{\overline{\beta_+(B_+)}})=-\ell_{-f}(b_-)-\ell_f(b_+)$.  This construction generalizes one found in \cite[Section 4]{S99}, in which Schwarz associates a pseudocycle to any Morse cycle.  Before formulating the key lemma we introduce a definition:

\begin{definition} Let $X,Y,Z$ be smooth oriented manifolds, possibly with boundary, let $f\co X\to Z$ and $g\co Y\to Z$ be smooth maps, and $z\in Z$.  We say that \emph{$f$ is coincident to $g$ near $z$} if there is a neighborhood $U$ of $z$ and an orientation-preserving diffeomorphism $\phi\co f^{-1}(U)\to g^{-1}(U)$ such that $f|_{f^{-1}(U)}=g\circ\phi$.
\end{definition}

Also, as a point of notation, if $X$ is an oriented manifold and $m\in\mathbb{Z}$ we denote by $mX$ the oriented manifold obtained by taking $|m|$ disjoint copies of $X$, all oriented in the same way as $X$ if $m>0$ and oriented oppositely to $X$ if $m<0$.

For any $j\in \N$ let $Crit_j(f)$ denote the collection of index-$j$ critical points of $f$.

\begin{lemma}\label{chainconstruct}  Let $a=\sum_{i=1}^{l}a_i p_i\in CM_{k+1}(f;\Z)$, with $d_{f,k+1}a=\sum_{j=1}^{m} z_j q_j$, where we assume all $a_i$ and $z_j$ are nonzero and the $p_i$ and $q_j$ are all distinct.  Then there is a smooth map $\alpha_a\co Y_a\to M$, where $Y_a$ is a smooth oriented $(k+1)$-manifold with boundary, having the following properties:
\begin{itemize} \item[(i)] $\alpha_a$ is a $(k+1)$-pseudochain, and $\alpha_a|_{\partial Y_a}$ is a $k$-pseudoboundary.
\item[(ii)] \[ \overline{\alpha_a(Y_a)}\subset \bigcup_{p\in Crit(f),|p|_f\leq k+1} W^{u}_{f}(p) \quad\mbox{and}\quad \overline{\alpha_a(\partial Y_a)}\subset \bigcup_{q\in Crit(f),|q|_f\leq k} W^{u}_{f}(q) \]
\item[(iii)] For each $i$, $\alpha_a$ is coincident near $p_i$ to the map $\coprod a_i W^{u}_{f}(p_i)\to M$ which is equal to the inclusion on each component of the domain.  Similarly, for each $j$, $\alpha_{a}|_{\partial Y_a}$ is coincident near $q_j$ to the map $\coprod z_j W^{u}_{f}(q_j)\to M$ which is equal to the inclusion on each component of the domain.
\item[(iv)] If $p\in Crit_{k+1}(f)\setminus \{p_1,\ldots,p_l\}$ then $p\notin \overline{\alpha_a(Y_a)}$. Similarly, if $q\in Crit_{k}(f)\setminus \{q_1,\ldots,q_m\}$ then $q\notin  \overline{\alpha_a(\partial Y_a)}$.
\item[(v)] \[ \max(f|_{\overline{\alpha_a(\partial Y_a)}})=\max\{f(q_j)|j=1,\ldots,m\}.\]
\end{itemize}
\end{lemma}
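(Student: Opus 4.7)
The plan is to build $Y_a$ by taking a disjoint union of (copies of) the partial compactifications $\bar{W}^u(p_i)$ and $\bar{W}^u(q_j)$ from Section \ref{or:morse} and successively identifying boundary faces, using Lemmas \ref{fuse} and \ref{resolve} to maintain the smooth structure. This elaborates the construction that Schwarz uses for \emph{cycles} in \cite{S99}; the new feature is that the Morse differential $d_{f,k+1}a=\sum_j z_j q_j$ is to be realized geometrically as the boundary $\partial Y_a$, and the identity $d_{f,k}\circ d_{f,k+1}a=0$ is what allows one to subsequently close that boundary into a $k$-pseudocycle.

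First I would set $Y_a^{(0)}=\coprod_i a_i\bar{W}^u(p_i)$ (with $|a_i|$ copies, oriented as $\mathop{\mathrm{sign}}(a_i)\bar{W}^u(p_i)$), a smooth oriented $(k+1)$-manifold with boundary. By (\ref{delu}) its boundary decomposes as $\coprod_{|r|_f=k}\coprod_i a_i \mathcal{M}(p_i,r;f)\times W^u(r)$, and the signed count of components mapping into $W^u(r)$ equals $(d_{f,k+1}a)_r$, which is $z_j$ if $r=q_j$ and $0$ otherwise. For each $r$ with $|r|_f=k$ I would then pair up positively- and negatively-oriented boundary components and glue each pair by the identity on $W^u(r)$ via Lemma \ref{fuse}; when $r\ne q_j$ all components pair, and when $r=q_j$ exactly $z_j$ signed components are left unpaired. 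The resulting smooth oriented manifold $Y_a^{(1)}$ has boundary $\coprod_j z_j W^u(q_j)$.

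At this point $\Omega_{\alpha_a|_{\partial Y_a^{(1)}}}$ would still contain, for each index-$(k-1)$ critical point $s$ reachable by a broken flowline from some $q_j$, the whole of $W^u(s)$, which has dimension $k-1$ rather than the $\le k-2$ required for a $k$-pseudocycle. To cure this I would attach to each unpaired copy $W^u(q_j)\subset\partial Y_a^{(1)}$ a collar $\bar{W}^u(q_j)\times[0,1]$, identifying $W^u(q_j)\times\{0\}$ with the matching face of $\partial Y_a^{(1)}$ via the identity. The side faces of these collars form
\[ \coprod_{|s|_f=k-1}\coprod_j z_j \mathcal{M}(q_j,s;f)\times W^u(s)\times[0,1], \]
and the signed number of copies of $W^u(s)\times[0,1]$ is $\sum_j z_j m_f(q_j,s)=(d_{f,k}\circ d_{f,k+1}a)_s=0$, so they can again be paired and identified via the identity on $W^u(s)\times[0,1]$ by Lemma \ref{fuse}. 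A final application of Lemma \ref{resolve} resolves the codimension-$2$ corners created at the top and bottom edges of the collars, producing the desired smooth $(k+1)$-manifold with boundary $Y_a$. The map $\alpha_a\co Y_a\to M$ is defined by $\bar{i}_{u,p_i}$ on the $\bar{W}^u(p_i)$ pieces and $\bar{i}_{u,q_j}\circ\mathop{pr}\nolimits_1$ on the collars. Verification of (i)--(v) is then bookkeeping: one has $\Omega_{\alpha_a}\subset\bigcup_{|r|_f\le k-1}W^u(r)$ and $\Omega_{\alpha_a|_{\partial Y_a}}\subset\bigcup_{|s|_f\le k-2}W^u(s)$, giving (i); (ii) and (iv) are built into the construction; the identifications occur away from the $p_i$ and $q_j$ themselves, giving (iii); and (v) follows from the fact that $f$ strictly decreases along negative gradient flowlines, so $\max f|_{\overline{\alpha_a(\partial Y_a)}}=\max_j f(q_j)$.

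The principal technical obstacle is keeping track of the manifold-with-corners structure and of orientations through the successive gluings: at each step one must check that the faces being identified are disjoint and that the identification is orientation-reversing (so that Lemma \ref{fuse} applies), and one must ensure that only codimension-$2$ corners are ever created so that Lemma \ref{resolve} suffices. Conceptually, the essential ingredient is the geometric incarnation of $d_{f,k}\circ d_{f,k+1}=0$ used in Step~4: without it the second layer of gluings could not be done, and $\alpha_a|_{\partial Y_a}$ would fail to be a $k$-pseudocycle.
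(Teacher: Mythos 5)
Your overall strategy---glue paired faces of compactified unstable manifolds until the unglued boundary realizes $d_{f,k+1}a$, then add a codimension-two layer of strata so that $\Omega_{\alpha_a|_{\partial Y_a}}$ has dimension $\le k-2$---is the right idea, but the collar-attachment step does not work as stated, and the paper takes a different route precisely to avoid this difficulty.

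The problem is that $W^u(q_j)\subset\partial Y_a^{(1)}$ and $W^u(q_j)\times\{0\}\subset \bar{W}^u(q_j)\times[0,1]$ do not have matching local structure, so identifying them does not yield a manifold with corners. Near a point of $\partial\bar{W}^u(q_j)\times\{0\}$ the piece $\bar{W}^u(q_j)\times[0,1]$ is a quadrant $\{t'\ge 0,\,v\ge 0\}$ (times $W^u(s)$), while $Y_a^{(1)}$ is a half-space $\{u\ge 0\}$ whose boundary component $W^u(q_j)$ has no limit point over $t'=0$: it is an honest manifold with boundary built from the partial compactifications of Section \ref{or:morse}, which carry only codimension-one strata. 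Gluing $t'>0,\,v=0$ to $t'>0,\,u=0$ therefore produces, in the local model, the set $\{t'\ge 0,\,w\ge 0\}\cup\{t'>0,\,w\le 0\}$, which is a half-plane with a slit removed along $\{t'=0,\,w<0\}$---not a manifold with corners. Lemma \ref{fuse} is not applicable here because $W^u(q_j)$ is a closed boundary component of $Y_a^{(1)}$ while $W^u(q_j)\times\{0\}$ is only a dense open subset of the face $\bar{W}^u(q_j)\times\{0\}$; the two pieces being identified have different closures.

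The paper sidesteps this by never attaching anything: it starts from the Burghelea--Haller broken-trajectory compactification $\widehat{Y}$ of $\coprod a_i W^u_f(p_i)$, which by \cite[Theorem 1(2)]{BH} is already a compact smooth manifold with faces on which the evaluation map extends smoothly, and then cuts out the open submanifold $Y_0\subset\widehat{Y}$ consisting of the codimension-$0$, selected codimension-$1$, and selected codimension-$2$ strata. The codimension-$2$ strata $\mathcal{M}(p_i,q;f)\times\mathcal{M}(q,r;f)\times W^u_f(r)$ play the structural role of your collars' side-faces, but they sit inside $\widehat{Y}$ from the start, so the faces $\overline{\{\gamma\}\times W^u(q)}$ that are glued by Lemma \ref{fuse} are honest closed faces of the manifold-with-faces $Y_0$ with matching closure structure. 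Note also that the paper never invokes $d_{f,k}\circ d_{f,k+1}=0$; the control on $\Omega_{\alpha_a|_{\partial Y_a}}$ comes from the inclusion of the codimension-$2$ strata (so divergent sequences in the boundary accumulate only on strata of codimension $\ge 3$, whose images lie in unstable manifolds of index $\le k-2$), not from cancellation coming from $d^2=0$. To repair your argument you would essentially have to replace $\bar{W}^u(p_i)$ by a deeper compactification that is a genuine manifold with corners up to codimension $2$, at which point you have reconstructed the paper's $Y_0$.
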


\begin{proof} Following \cite{S99}, let $\Delta a$ denote the compact oriented zero-manifold obtained as a disjoint union of $a_i$-many copies of each of the oriented zero-manifolds $\mathcal{M}(p_i,q)$, as $i$ varies from $1$ to $l$ and as $q$ varies through $Crit_k(f)$.  For $q_0\in Crit_k(f)$ write $\Delta a(q_0)$ for the oriented zero-submanifold of $\Delta a$ consisting of the copies of those $\mathcal{M}(p_i,q;f)$ with $q=q_0$.  Thus we have \[ d_{f,k+1}a=\sum_{q\in Crit_j(f)}\#\left(\Delta a(q)\right) q,\] and so \[ \#\left(\Delta a(q)\right)=\left\{\begin{array}{ll} z_j & \mbox{if }q=q_j, \\ 0 &\mbox{otherwise}.\end{array}\right.\]

Now it is a general combinatorial fact that, if $S$ is a compact oriented zero-manifold, then an equivalence relation may be constructed on $S$ so that $|\#(S)|$-many of the equivalence classes are singletons (oriented consistently with $sign(\#(S))$) and the rest of the equivalence classes are two-element sets $\{s_-,s_+\}$ where $s_-$ is negatively oriented and $s_+$ is positively oriented.  Choose such an equivalence relation on each of the oriented zero-manifolds $\Delta a(q)$, and let $\sim_{\Delta}$ denote the union of these equivalence relations, so that $\sim_{\Delta}$ is an equivalence relation on $\Delta a$.  For $i=1,2$ let $\Delta^ia(q)$ denote the set of elements of $\Delta a(q)$ whose equivalence class has cardinality $i$, and let $\Delta^i a=\cup_{q}\Delta^ia(q)$, so $\Delta a=\Delta^1 a\cup \Delta^2a$.

The disjoint union $\coprod_{i=1}^{l}a_iW^{u}_{f}(p_i)$ has a broken-trajectory compactification $\widehat{Y}$ as  in \cite[Theorem 1(2)]{BH} which is a smooth compact manifold with faces and a smooth evaluation map;  a general codimension-$c$ connected stratum of this compactification is given by a connected component of a product $\mathcal{M}(p_i,r_1;f)\times\mathcal{M}(r_1,r_2;f)\times \cdots \mathcal{M}(r_{c-1},r_c;f)\times W^{u}_{f}(r_c)$ where $|r_c|_f<\cdots<|r_1|_f<|p_i|_f$, with the evaluation map restricting to the stratum as the natural embedding of $W^{u}_{f}(r_c)$  (and, of course, we take $a_i$ copies of each of these strata).  Here and below a ``codimension-$c$ connected stratum'' of a manifold with corners $X$ refers to a connected component of $\partial^{{}^{\circ}c} X$, and a ``codimension-$c$ stratum'' is a disjoint union of codimension-$c$ connected strata.   We will first form a manifold with faces $Y_0$, defined to be the open subset of $\widehat{Y}$ given as the union of the following types of strata:
\begin{itemize} \item[(0)] All of the codimension-zero strata (\emph{i.e.}, $a_i$ copies of $W^{u}_{f}(p_i)$ for each $i$);
\item[(1A)] Those codimension-one strata of the form $\mathcal{M}(p_i,q;f)\times W^{u}_{f}(q)$ where $|q|_f=k$;
\item[(1B)] Those codimension-one strata of the form $\mathcal{M}(p_i,r;f)\times W^{u}_{f}(r)$ where $|r|_f=k-1$ and where, for some $j$, we have $\mathcal{M}(q_j,r;f)\neq\varnothing$.
\item[(2)] Those codimension-two strata of the form $\mathcal{M}(p_i,q;f)\times \mathcal{M}(q,r;f)\times W^{u}_{f}(r)$ where $|q|_f=k$ and $|r|_f=k-1$ is such that, for some $j$, we have $\mathcal{M}(q_j,r;f)\neq\varnothing$.  
\end{itemize}

(The fact that this is indeed open in $\widehat{Y}$ follows from the fact that the connected faces which contain any of the strata in (2) are closures of connected components of strata appearing in (1A) or (1B).)  

Among the connected faces of the manifold with corners $Y_0$ are the closures $\overline{\{\gamma\}\times W^{u}_{f}(q)}$ where $\gamma\in \Delta a$; an element of such a closure is represented by a broken trajectory whose first component is $\gamma$, and so all of these faces are disjoint as $\gamma$ varies through $\Delta a$.  Let $F_-$ be the union of the connected faces $\overline{\{\gamma\}\times W^{u}_{f}(q)}$ as $\gamma$ varies through those elements of $\Delta^2 a$ which are negatively oriented, and let $F_+$ be the union of the connected faces 
$\overline{\{\gamma\}\times W^{u}_{f}(q)}$ as $\gamma$ varies through those elements of $\Delta^2 a$ which are positively oriented.  Our
equivalence relation $\sim_{\Delta}$ induces an orientation-reversing diffeomorphism $\phi\co F_-\to F_+$ which maps $\{\gamma\}\times W^{u}_{f}(q)$ to $\{\gamma'\}\times W^{u}_{f}(q)$ by the identity on $W^{u}_{f}(q)$ whenever $\gamma\sim_{\Delta}\gamma'$ and $\gamma$ is negatively-oriented while $\gamma'$ is positively oriented.  Thus we may apply Lemma \ref{fuse} to glue $F_-$ to $F_+$, resulting in a new oriented manifold with corners $Y_{0}^{\phi}$.  

The faces of $Y_{0}^{\phi}$ include (the images under the projection $\pi\co Y_{0}\to Y_{0}^{\phi}$ of) the faces $\overline{\{\gamma\}\times W^{u}_{f}(q)}$ where $\gamma\in \Delta^1a$ (and so $q=q_j$ for some $j$), as well as unions of images under $\pi$ of faces \linebreak $\overline{\mathcal{M}(p,r;f)\times W^{u}_{f}(r)}$ where $|r|_f=k-1$ and $\mathcal{M}(p_j,r;f)\neq\varnothing$ (in some cases, different faces of this form have been joined together along their boundary by the gluing process that created $Y_{0}^{\phi}$ from $Y_0$).  

Lemma \ref{resolve} then gives a smooth oriented manifold with boundary $Y_a$ and a smooth homeomorphism $\pi_1\co Y_a\to Y_{0}^{\phi}$. Since the evaluation map $E\co Y_0\to M$ descends to a smooth map $\bar{E}\co Y_{0}^{\phi}\to M$ by Lemma \ref{fuse}, the composition $\alpha_a=\bar{E}\circ \pi_1\co Y_a\to M$ is smooth.  We will now show that $\alpha_a$ is a pseudochain and that $\alpha_a|_{\partial Y_a}$ is a pseudoboundary.  In other words we must show that the $\Omega$-limit sets $\Omega_{\alpha_a}$ and $\Omega_{\alpha_a|_{\partial Y_a}}$ have dimensions at most $k-1$ and $k-2$ respectively.  

Now evidently $\Omega_{\alpha_a}=\Omega_{E}$ and $\Omega_{\alpha_a|_{\partial Y_a}}=\Omega_{E|_{\pi^{-1}(\pi_1(\partial Y_a))}}$.  Any divergent sequence in $Y_0$ has a subsequence which converges in the compactification $\widehat{Y}$ to a point which is sent by the evaluation map to an element of an unstable manifold $W^{u}_{f}(s)$ where $|s|_f\leq k-1$; it quickly follows from this that $\Omega_{E}$ (and hence also $\Omega_{\alpha_a}$) has dimension at most $k-1$.

As for $\Omega_{\alpha_a|_{\partial Y_a}}=\Omega_{E|_{\pi^{-1}(\pi_1(\partial Y_a))}}$, note that $\pi_1(\partial Y_a)$ is just the union of the boundary and corner strata of $Y_{0}^{\phi}$, and so $\pi^{-1}(\pi_1(\partial Y_a))$ is the union of all of the boundary and corner strata of $Y_0$ \emph{except} those of the form $\{\gamma\}\times W^{u}_{f}(q)$ where $\gamma\in \Delta^2a(q)$.  If $\{x_n\}_{n=1}^{\infty}$ is a divergent sequence in $\pi^{-1}(\pi_1(\partial Y_a))$, then after passing to a subsequence either each $x_n$ belongs to some $\{\gamma\}\times \overline{W^{u}_{f}(q_j)}$ where $\gamma\in \Delta^1a(q_j)$ (and where the closure is taken in $Y_0$, not in $\widehat{Y}$), or else each $x_n$ belongs to some $\overline{\mathcal{M}(p_i,r;f)\times W^{u}_{f}(r)}$ where $|r|_f=k-1$ and where $\mathcal{M}(q_j,r;f)\neq\varnothing$ for some $j$.  Now in view of the codimension-two strata that were included in $Y_0$ (all of which are still contained in $\pi^{-1}(\pi_1(\partial Y_a))$, though some of them will project to subsets of $\partial^{{}^{\circ}1}Y_{0}^{\phi}$), if such a sequence diverges in $\pi^{-1}(\pi_1(\partial Y_a))$ then, considering it now as a sequence in the compact space $\widehat{Y}$, it must have a subsequence which converges to a point which is sent by the evaluation map to an element of an unstable manifold $W^{u}_{f}(s)$ where $|s|_f\leq k-2$.  Thus indeed $\Omega|_{\alpha|_{\partial Y_a}}$ has dimension at most $k-2$.

We have now proven property (i) of Lemma \ref{chainconstruct}; the other properties follow quickly from the construction.  Indeed property (ii) follows directly from the facts that $\alpha_a(Y_a)\subset E(Y_0)$, that $\alpha_a(\partial Y_a)\subset E(\overline{\partial^{{}^{\circ}1} Y_0})$, and that for $p\in Crit_l(f)$ the closure of $W^{u}_{f}(p)$ is (thanks in part to the Morse--Smale property) contained in the union of unstable manifolds of critical points of index at most $l$.  This latter fact also implies that for each $\gamma\in \Delta^2 a(q)$ the face $\{\gamma\}\times W^{u}_{f}(q)$ is disjoint from some neighborhood $V$ of the index $k+1$ critical points, and therefore the evaluation maps $E\co Y_0\to M$ and $\bar{E}\co Y_{0}^{\phi}\to M$ are coincident near each $p\in Crit_{k+1}(f)$.  Moreover the region on which $\pi_1\co Y_a\to Y_{0}^{\phi}$ fails to be a diffeomorphism (namely, the preimage of the corner locus of $Y_{0}^{\phi}$) is also disjoint from a neighborhood of $\alpha_{a}^{-1}(Crit_{k+1}(f))$, in view of which $\alpha_a$ is coincident to $\bar{E}$, and so also to $E$, near each $p\in Crit_{k+1}(f)$.  This immediately implies the first sentences of both (iii) and (iv).

The second sentences of (iii) and (iv) follow similarly, since any point of $\partial Y_a$ which is mapped to a suitably small neighborhood of $Crit_k(f)$ is contained in the preimage under $\pi_1$ of the image under $\pi$ of a face of the form $\{\gamma\}\times W^{u}_{f}(q_j)$ where $\gamma\in \Delta^1 a(q_j)$, and $\pi_{1}^{-1}\circ\pi$ is an orientation-preserving diffeomorphism onto its image when restricted to such a face.  

Finally, $\alpha_a(\partial Y_a)$ contains each of the points $q_j$ since we assume $z_j\neq 0$ for all $j$, while any point   $x\in\alpha_a(\partial Y_a)$ lies either on an unstable manifold $W^{u}_{f}(q_j)$ or on an unstable manifold $W^{u}_{f}(r)$ where $\mathcal{M}(q_j,f;r)\neq \varnothing$ for some $j$.  Since $f$ decreases along its negative gradient flowlines, in either case we will have $f(x)\leq f(q_j)$ for some $j$, proving (v).
\end{proof}

\begin{prop}\label{twoboundaries}
Let $a_-\in CM_{n-k}(-f;\Z)$ and $a_+\in CM_{k+1}(f;\Z)$, giving via Lemma \ref{chainconstruct} pseudochains $\alpha_{a_-}\co Y_{a_-}\to M$ and $\alpha_{a_+}\co Y_{a_+}\to M$ (using the Morse function $-f$ for the former and $f$ for the latter).  Write $B_{\pm}=\partial Y_{a_{\pm}}$, so that $b_-:=\alpha_{a_-}|_{B_-}$ is a $(n-k-1)$-pseudoboundary and $b_+:=\alpha_{a_+}|_{B_+}$ is a $k$-pseudoboundary.  These pseudoboundaries satisfy the following properties:
\begin{itemize} \item[(i)] $\min(f|_{\overline{ b_-(B_-)}})-\max(f|_{\overline{b_+(B_+)}})=-\ell_{-f}(d_{-f,n-k}a_-)-\ell_f(d_{f,k+1}a_+)$.
\item[(ii)] The linking number of the pseudoboundaries $b_-$ and $b_+$ is well-defined, and given by \[ lk(b_-,b_+)=\Lambda(d_{-f,n-k}a_-,d_{f,k+1}a_+).\]
\item[(iii)] For all $\phi$ belonging to a  $C^{\infty}$-residual subset of $Diff(M)$, our given Morse--Smale locally trivial Riemannian metric $h$ is generic with respect to $f,\phi\circ b_+,\phi\circ b_-$ in the sense of Definition \ref{genwrt}, so we have a well-defined map $I_{\phi\circ b_+,\phi\circ b_-}\co CM_{n}(f;\K)\to CM_{0}(f;\K)$.  If additionally $\phi$ is sufficiently $C^1$-close to the identity then $I_{\phi\circ b_+,\phi\circ b_-}$ is equal to zero.
\end{itemize}
\end{prop}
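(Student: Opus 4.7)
For part (i), the plan is to apply Lemma \ref{chainconstruct}(v) directly. Applied to $a_+$ (with Morse function $f$), it gives $\max(f|_{\overline{b_+(B_+)}}) = \ell_f(d_{f,k+1}a_+)$; applied to $a_-$ (with $-f$), it gives $\max(-f|_{\overline{b_-(B_-)}}) = \ell_{-f}(d_{-f,n-k}a_-)$, i.e., $\min(f|_{\overline{b_-(B_-)}}) = -\ell_{-f}(d_{-f,n-k}a_-)$. Subtracting yields (i).

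For part (ii), I will first verify that $\overline{b_+(B_+)} \cap \overline{b_-(B_-)} = \varnothing$ by combining Lemma \ref{chainconstruct}(ii) (which places the two closures inside $\bigcup_{|p|_f \leq k} W^u_f(p)$ and $\bigcup_{|q|_f \geq k+1} W^s_f(q)$ respectively, using $W^u_{-f}(q) = W^s_f(q)$) with the Morse--Smale condition (which forces $W^u_f(p) \cap W^s_f(q) = \varnothing$ whenever $|p|_f < |q|_f$); hence $lk(b_-, b_+)$ is well-defined. To compute it, I will use $\alpha_{a_+} \co Y_{a_+} \to M$ itself as a bounding pseudochain for $b_+$ in Definition \ref{lk}; the required transversality and $\Omega$-limit conditions follow because any intersection point of the images of $\alpha_{a_+}$ and $b_-$ would have to lie in $W^u_f(r) \cap W^s_f(q)$ with $|r|_f \leq k+1 \leq |q|_f$, forcing $r = q$ to be a critical point of index $k+1$. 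Near each such $p \in Crit_{k+1}(f)$, Lemma \ref{chainconstruct}(iii) identifies $\alpha_{a_+}$ with $a(p)$-many copies of $W^u_f(p)$ and $b_-$ with $\tilde z(p)$-many copies of $W^u_{-f}(p) = W^s_f(p)$ (where $a(p)$ is the coefficient of $p$ in $a_+$ and $\tilde z(p)$ that of $p$ in $d_{-f,n-k}a_-$); the orientation convention $\iota(i_{s,p}, i_{u,p}) = 1$ contributes $+1$ per copy, giving $lk(b_-, b_+) = \sum_{|p|_f = k+1} a(p)\tilde z(p) = \Pi(d_{-f,n-k}a_-, a_+) = \Lambda(d_{-f,n-k}a_-, d_{f,k+1}a_+)$, where the last equality uses (\ref{mlpair}) with $z = a_+$ as a primitive for $d_{f,k+1}a_+$.

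For part (iii), residuality of the acceptable $\phi$'s follows by a transversality argument in the spirit of Lemma \ref{diffu} with the diffeomorphism group acting on the target by postcomposition. For such $\phi$, the matrix coefficients of $I_{\phi\circ b_+, \phi\circ b_-} \co CM_n(f;\K) \to CM_0(f;\K)$ are well-defined signed counts on the fiber product of $B_+ \times B_-$ with $\overline{\tilde{\mathcal{M}}(p_{\max},q_{\min};f) \times (0,\infty)}$ over $M \times M$. A direct dimension count using Lemma \ref{mark2} shows that fiber-product contributions from each of the boundary strata $C_1, \ldots, C_6$ and from the various constituents of $\Omega_{\bar E_1}$ have strictly negative expected dimensions (since $\dim(B_+ \times B_-) = n-1$ while the main stratum has dimension $n+1$), and so vanish generically; only the interior $\tilde{\mathcal{M}}(p_{\max},q_{\min};f) \times (0,\infty)$ can contribute. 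For $\phi = \mathrm{id}$, any such interior contribution would force $b_+(b) = \gamma(0) \in W^u_f(p_{\max})$, but Lemma \ref{chainconstruct}(ii) places $b_+(B_+)$ inside $\bigcup_{|p|_f \leq k} W^u_f(p)$, disjoint from $W^u_f(p_{\max})$ by Morse--Smale (treating the edge case $k = n-1$ by a parallel argument, in which $b_-$ becomes zero-dimensional). To promote emptiness at the identity to a full $C^1$-neighborhood, I will argue that the universal fiber product $\mathcal{F} = \{(\phi, b, b', \bar\gamma) : (\phi\circ b_+(b), \phi\circ b_-(b')) = \bar E_1(\bar\gamma)\}$ satisfies a sequential-compactness property: any sequence $\phi_n \to \mathrm{id}$ in $C^1$ with $\mathcal{F}_{\phi_n} \neq \varnothing$ yields, via compactness of $\overline{b_\pm(B_\pm)}$ and control on divergent $\bar\gamma_n$ through $\Omega_{\bar E_1}$ and $\Omega_{b_\pm}$, a limit configuration $(x, y, \bar\gamma_\infty)$ with $x \in \overline{b_+(B_+)}$ and $\bar E_1(\bar\gamma_\infty) = (x, y)$, contradicting the identity-case analysis.

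I anticipate the main technical obstacle lying in the last step of part (iii), specifically in formalizing the sequential-compactness claim to exclude a $C^1$-neighborhood of identity from the image of $\mathcal{F} \to \Diff(M)$. The crux is that limits of divergent sequences in the interior must land in $\Omega_{\bar E_1}$ or in some $C_i$, and one must verify that each such limit is incompatible with $x \in \overline{b_+(B_+)}$; here a key input is that $\tilde{\mathcal{M}}(a,b;f) = \varnothing$ whenever $|a|_f < |b|_f$ by Morse--Smale, which rules out many components of $\Omega_{\bar E_1}$ enumerated in Lemma \ref{mark2}. A secondary delicate point is the sign bookkeeping in part (ii), in particular to confirm that the identification $W^u_{-f}(p) = W^s_f(p)$ used when applying Lemma \ref{chainconstruct}(iii) to $a_-$ is orientation-preserving in the conventions of Section \ref{or}.
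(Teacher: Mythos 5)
Your proposal is correct and follows essentially the same route as the paper's proof. Parts (i) and (ii) match the paper nearly line-for-line: (i) is a direct read-off of Lemma~\ref{chainconstruct}(v), and (ii) uses $\alpha_{a_+}$ itself as the bounding pseudochain, with the Morse--Smale condition confining the intersection locus to index-$(k+1)$ critical points where Lemma~\ref{chainconstruct}(iii) reduces the count to $\Pi(z_-,a_+)$. For (iii) you reach the same conclusion via the same mechanism (compactness of trajectory spaces plus Morse--Smale forbidding index-increasing connections), but your $\phi=\mathrm{id}$ step is phrased narrowly around $\gamma(0)\in W^u_f(p_{\max})$; the paper instead argues symmetrically that a (possibly broken) trajectory cannot pass first through $\overline{b_+(B_+)}\subset\bigcup_{|p|_f\le k}W^u_f(p)$ and later through $\overline{b_-(B_-)}\subset\bigcup_{|q|_f\ge k+1}W^s_f(q)$, which is exactly the form needed once one takes the compactness limit of a sequence $\phi_n\to\mathrm{id}$, so you would be better served arguing in that form from the start rather than treating the broken-trajectory case as a subsequent repair. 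The dimension count you invoke to isolate the interior stratum is already built into the definition of $I_{g_0,g_1}$ and is not an independent step.
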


\begin{proof} Write $z_-=d_{-f,n-k}a_-$ and $z_+=d_{f,k+1}a_+$.  The statement (i) follows directly from Lemma \ref{chainconstruct}(v), as
\[ \max(\pm f|_{\overline{b_{\pm}(B_{\pm}) }})=\ell_{\pm f}(b_{\pm}),\] and so \[ \min(f|_{\overline{b_-(B_-)}})-\max(f|_{\overline{b_+(B_+)}})=-\ell_{-f}(z_-)-\ell_f(z_+).\]

Turning to (ii),  by Lemma \ref{chainconstruct}(ii) $\overline{b_+(B_+)}$ is contained in the union of the unstable manifolds of the critical points of $f$ with index at most $k$, while $\overline{b_-(B_-)}$ is contained in the union of the unstable manifolds of the critical points of $-f$ with index at most $n-k-1$ (\emph{i.e.}, the stable manifolds of the critical points of $f$ with index at least $k+1$).  The Morse--Smale condition therefore implies that $\overline{b_+(B_+)}\cap  \overline{b_-(B_-)}=\varnothing$, and so these two pseudoboundaries have a well-defined linking number, given by \[ lk_{\K}(b_-,b_+)=\#(Y_{a_+}{}_{\alpha_{a_+}}\times_{b_-}B_-).\]  
Now $\overline{\alpha_{a_+}(Y_{a_+})}$ is contained in the union of the unstable manifolds of critical points of $f$ with index at most $k+1$; again by the Morse--Smale condition we have, if $p,q\in Crit(f)$ obey $|p|_f\leq k+1\leq |q|_f$, then \[ W^{u}_{f}(p)\cap W^{s}_{f}(q)=\left\{\begin{array}{ll} \{p\} & \mbox{if }p=q\mbox{ and }|p|_f=|q|_f=k+1, \\ \varnothing & \mbox{otherwise}. \end{array}\right.\]  Let us write $a_+=\sum_i a_{i,+}p_i$ and $z_-=\sum_j z_{j,-}q_j$.  It then follows from Lemma \ref{chainconstruct}(iii) and (iv) and the fact that $W^{u}_{-f}(q_j)=W^{s}_{f}(q_j)$ as oriented manifolds that 
\[ \#(Y_{a_+}{}_{\alpha_{a_+}}\times_{b_-}B_-)=\sum_{i,j}a_{i,+}z_{j,-}\#_{\K}(W^{u}_{f}(p_i){}_{i_{u,p_i}}\times_{i_{s,q_j}}W^{s}_{f}(q_j)).\]  By our orientation conventions and index considerations, $W^{u}_{f}(p_i){}_{i_{u,p_i}}\times_{i_{s,q_j}}W^{s}_{f}(q_j)$  consists of a single positively-oriented point if $p_i=q_j$ and is empty otherwise.  We thus have \[ lk(b_-,b_+)=\sum_{\{(i,j)|p_i=q_j\}}a_{i,+}z_{j,-}=\Pi(z_-,a_+)=\Lambda(z_-,z_+), \] proving (ii).

As for (iii), the fact that  $h$ is generic with respect to $f,\phi\circ b_+,\phi\circ b_-$ for a $C^{\infty}$-residual set of $\phi\in Diff(M)$ follows straightforwardly by applying Lemma \ref{diffu} to the various relevant fiber products.  If the final statement of the proposition were false, then we could find a sequence $\{\phi_n\}_{n=1}^{\infty}$ in $Diff(M)$ which $C^1$-converges to the identity, critical points $p,q\in Crit(f)$, and a sequence $(\gamma_n,T_n)\in\tilde{\mathcal{M}}(p,q;f)\times(0,\infty)$ such that $\gamma_n(0)\in  \phi_n(\overline{b_+(B_+)})$ and $\gamma_n(T_n)\in \phi_n(\overline{b_-(B_-)})$.  A standard compactness result (\emph{e.g.} \cite[Proposition 2.35]{S93}) would then give a possibly-broken Morse trajectory for $f$ which passes first through $\overline{b_+(B_+)}$ and then, either strictly later or at precisely the same time, through $\overline{b_-(B_-)}$.  But since $\overline{b_+(B_+)}$ is contained in the union of the unstable manifolds of critical points with index at most $k$, while $\overline{b_-(B_-)}$
is contained in the union of the stable manifolds of critical points with index at least $k+1$, this is forbidden by the Morse--Smale property.  This contradiction completes the proof.
\end{proof}

We can now finally complete the proof of Theorem \ref{alggeom} and thus Theorem \ref{main1}.  For clarity we will, unlike elsewhere in the paper, incorporate the ring over which we are working into the notation for the Morse boundary operator and the Morse-theoretic linking pairing: thus we have maps $d_{f,k+1}^{\K}\co CM_{k+1}(f;\K)\to CM_{k}(f;\K)$ and $\Lambda_{\K}\co Im(d_{-f,n-k}^{\K})\times Im(d_{f,k+1}^{\K})\to\K$. We first make the following almost-obvious algebraic observation:

\begin{lemma} \label{extboundary} Let $0\neq z\in d_{f,k+1}^{\K}(CM_{k+1}(f;\K))$.  Then there are $z_1,\ldots,z_N\in d_{f,k+1}^{\Z}(CM_{k+1}(f;\Z))$ and $r_1,\ldots,r_N\in \K$ such that $z=\sum_{i=1}^{N}z_i\otimes r_i$ and $\ell_f(z_i)\leq \ell_f(z)$ for all $z$. 
\end{lemma}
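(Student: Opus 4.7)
The plan is to exploit the natural identification $(CM_*(f;\K), d_f^\K) = (CM_*(f;\Z), d_f^\Z) \otimes_\Z \K$, which is immediate from the fact that the Morse boundary operator is defined by integer trajectory counts together with the unique unital ring morphism $\Z \to \K$. Writing $z = d_{f,k+1}^\K(y)$ for some $y \in CM_{k+1}(f;\K)$ and expanding $y = \sum_{i=1}^N y_i \otimes r_i$ with $y_i \in CM_{k+1}(f;\Z)$ and $r_i \in \K$ immediately gives $z = \sum_{i=1}^N z_i \otimes r_i$, where $z_i := d_{f,k+1}^\Z(y_i)$ automatically lies in $d_{f,k+1}^\Z(CM_{k+1}(f;\Z))$; the only nontrivial task is to arrange the filtration bound $\ell_f(z_i) \leq \lambda$, where $\lambda := \ell_f(z)$.

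Setting $A := \{a \in CM_{k+1}(f;\Z) : d_{f,k+1}^\Z(a) \in CM_k^\lambda(f;\Z)\}$, the goal reduces to producing a representation of $y$ in which every $y_i$ lies in $A$, since then $z_i = d_{f,k+1}^\Z(y_i) \in CM_k^\lambda(f;\Z)$ gives $\ell_f(z_i) \leq \lambda$. To this end I would consider the short exact sequence of abelian groups
\[ 0 \longrightarrow A \longrightarrow CM_{k+1}(f;\Z) \xrightarrow{\;\overline{d}\;} Q \longrightarrow 0, \]
where $\overline{d}$ is the composition of $d_{f,k+1}^\Z$ with the projection $CM_k(f;\Z) \to CM_k(f;\Z)/CM_k^\lambda(f;\Z)$ and $Q$ is its image. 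The quotient $CM_k(f;\Z)/CM_k^\lambda(f;\Z)$ is the free abelian group on the index-$k$ critical points with critical value strictly greater than $\lambda$, so its subgroup $Q$ is itself free abelian; hence the displayed sequence splits, and tensoring with $\K$ produces a split exact sequence $0 \to A \otimes_\Z \K \to CM_{k+1}(f;\K) \to Q \otimes_\Z \K \to 0$. The composition $CM_{k+1}(f;\K) \xrightarrow{d_f^\K} CM_k(f;\K) \twoheadrightarrow CM_k(f;\K)/CM_k^\lambda(f;\K)$ factors through $Q \otimes_\Z \K$, and the hypothesis $z \in CM_k^\lambda(f;\K)$ forces the image of $y$ there to vanish, placing $y$ in $A \otimes_\Z \K$ and producing the desired expansion.

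The step where I expect the main care to be required is the implicit injectivity underlying the last sentence: the map $Q \otimes_\Z \K \to CM_k(f;\K)/CM_k^\lambda(f;\K) = (CM_k(f;\Z)/CM_k^\lambda(f;\Z)) \otimes_\Z \K$ induced by the inclusion $Q \hookrightarrow CM_k(f;\Z)/CM_k^\lambda(f;\Z)$ need not be injective in positive characteristic, since its kernel is governed by $\mathrm{Tor}_1^\Z\bigl((CM_k(f;\Z)/CM_k^\lambda(f;\Z))/Q,\,\K\bigr)$, which vanishes whenever $\K$ is flat over $\Z$ (in particular for fields of characteristic zero) but can be nontrivial otherwise. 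To cover arbitrary $\K$, I would enlarge $A$ to $\tilde A := \{a \in CM_{k+1}(f;\Z) : d_{f,k+1}^\Z(a) \otimes 1_\K \in CM_k^\lambda(f;\K)\}$ and rerun the argument with $\tilde A$ in place of $A$, exploiting that the quotient $CM_{k+1}(f;\Z)/\tilde A$ now embeds into the $\K$-module $CM_k^{>\lambda}(f;\K)$, whose $\Z$-submodules are appropriately controlled against $\mathrm{Tor}_1^\Z(\cdot,\K)$.
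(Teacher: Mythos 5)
Your route differs from the paper's. The paper works in degree $k$: with $A=\mathrm{Im}(d_{f,k+1}^{\Z})$ and $B=CM_k^{\lambda}(f;\Z)$ viewed as subgroups of $CM_k(f;\Z)$, it tensors the Mayer--Vietoris sequence $0\to A\cap B\to A\oplus B\to A+B\to 0$ with $\K$ and appeals to right-exactness. You instead work in degree $k+1$, with $A=(d_{f,k+1}^{\Z})^{-1}(CM_k^{\lambda}(f;\Z))$, and use freeness of the quotient $Q$ to split. Both reductions replace the lemma by a piece of pure algebra. Your development of the split sequence and the factoring through $Q\otimes_{\Z}\K$ is fine, and your diagnosis of where care is needed is sharp: the injectivity you require of $Q\otimes_{\Z}\K\to\bigl(CM_k(f;\Z)/CM_k^{\lambda}(f;\Z)\bigr)\otimes_{\Z}\K$ is a $\mathrm{Tor}_1^{\Z}$-vanishing condition, automatic when $\K$ is flat over $\Z$ (in particular for $\mathbb{Q}$) but not for general $\K$.

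The proposed repair, however, does not close that gap. Suppose you replace $A$ by $\tilde A := \{a : d_{f,k+1}^{\Z}(a)\otimes 1_{\K}\in CM_k^{\lambda}(f;\K)\}$ and succeed in showing that $y$ lies in the image of $\tilde A\otimes_{\Z}\K$, so that $y=\sum y_i\otimes r_i$ with $y_i\in\tilde A$. Then $z_i:=d_{f,k+1}^{\Z}(y_i)$ satisfies only $z_i\otimes 1_{\K}\in CM_k^{\lambda}(f;\K)$; it does \emph{not} follow that $\ell_f(z_i)\leq\lambda$, which is the condition $z_i\in CM_k^{\lambda}(f;\Z)$ that the lemma actually requires. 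The $\K$-level condition says only that the integer coefficients of $z_i$ on generators above level $\lambda$ lie in the kernel of $\Z\to\K$, not that they vanish: with $\K=\Z/p\Z$, one could have $z_i=q'+p\,q$ with $f(q)>\lambda\geq f(q')$, so that $z_i\otimes 1_{\K}$ is supported below level $\lambda$ while $\ell_f(z_i)=f(q)>\lambda$. Since $\ell_f$ is computed from the \emph{integral} chain, relaxing the defining condition of the subgroup from a $\Z$-condition to a $\K$-condition surrenders precisely the filtration control the lemma is asking you to establish; the $\mathrm{Tor}$ obstruction you isolated is genuine and remains open for non-flat $\K$ under this approach.
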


\begin{proof}  The lemma amounts to the statement that, for all $\lambda\in\R$, the natural map \[ \left(Im(d_{f,k+1}^{\Z})\cap CM^{\lambda}_{k}(f;\Z)\right)\otimes \K \to Im(d_{f,k+1}^{\K})\cap CM^{\lambda}_{k}(f;\K) \] is surjective.  Write $A=Im(d_{f,k+1}^{\Z})$ and $B=CM_{k}^{\lambda}(f;\Z)$ and view them as submodules of the $\Z$-module $CM_{k}(f;\Z)$; we then have $Im(d_{f,k+1}^{\K})=A\otimes \K$ and $CM^{\lambda}_{k}(f;\K)=B\otimes \K$, and so we wish to show that the natural map \[ j_{\K}\co (A\cap B)\otimes \K\to (A\otimes\K)\cap (B\otimes\K) \] is surjective.  But this is true on quite general grounds: there is a short exact sequence \[ 0\to A\cap B\to A\oplus B\to A+B\to 0 \] where the first map is $x\mapsto (x,x)$ and the second is $(a,b)\mapsto a-b$.  The right exactness of the tensor product functor then shows that the induced sequence \[ (A\cap B)\otimes \K\to (A\otimes\K)\oplus(B\otimes\K)\to (A+B)\otimes \K\to 0 \] is exact, and exactness at the second term implies that $j_{\K}$ is surjective.
\end{proof}

\begin{proof}[Proof that $\beta^{alg}\leq \beta^{geom}$ in Theorem \ref{alggeom}] 
First of all we observe that, for any nontrivial ring $\K$ and any grading $k$, we have $\beta^{geom}_{k}(f;\K)\geq 0$. Indeed, in any  coordinate chart $U\subset M$ it is straightforward to construct smooth maps $\alpha_-\co B^{n-k}\to U$, $\alpha_+\co B^{k+1}\to U$ (where $B^l$ denotes the closed $l$-dimensional unit ball), the images of whose boundaries are disjoint, such that \linebreak $lk(\alpha_{-}|_{\partial B^{n-k}},\alpha_+|_{\partial B^{k+1}})=1$ (and so since $\K$ is a nontrivial ring $lk_{\K}(\alpha_{-}|_{\partial B^{n-k}},\alpha_+|_{\partial B^{k+1}})\neq 0)$.  For any $\ep>0$, by taking the coordinate chart $U$ so small that $\max f|_{\bar{U}}-\min f|_{\bar{U}}<\ep$ we guarantee that $\min(f|_{\alpha_-(\partial B^{n-k})})-\max(f|_{\alpha_+(\partial B^{k+1})})>-\ep$.  This proves that $\beta^{geom}_{k}(f;\K)\geq 0$.

So for the rest of the proof we may assume that $\beta^{alg}_{k}(f;\K)>0$, since otherwise the inequality $\beta^{alg}_{k}\leq \beta^{geom}_{k}$ is immediate.  Since $\beta^{alg}_{k}(f;\K)$ is independent of the choice of Morse--Smale metric, we may use one which is locally trivial near $Crit(f)$, allowing us to use the constructions of Lemma \ref{chainconstruct}.   Let $a_-\in CF_{n-k}(-f;\K)$ and $a_+\in CF_{k+1}(f;\K)$ be such that, where $z_-=d_{-f,n-k}^{\K}a_-$ and $z_+=d_{f,k+1}^{\K}a_+$, we have $\Lambda_{\K}(z_-,z_+)\neq 0$ (such $a_{\pm}$ do exist, since  $\beta^{alg}_{k}(f;\K)>0$).
By Lemma \ref{extboundary} we may write \[ z_-=\sum_{i=1}^{N_-}z_{-,i}\otimes r_i\qquad z_+=\sum_{i=1}^{N_+}z_{+,i}\otimes s_i \] where $r_i,s_i\in \K$, $z_{-,i}\in Im(d_{-f,n-k}^{\Z})$, $z_{+,i}\in Im(d_{f,k+1}^{\Z})$, and \begin{equation}\label{elldec}\ell_{\pm f}(z_{\pm,i})\leq \ell_{\pm f}(z_{\pm}) \end{equation}  for all $i$. We then have \[ 0\neq \Lambda_{\K}(z_-,z_+)=\sum_{i,j}\Lambda_{\Z}(z_{-,i},z_{+,j})r_is_j,\] so there must be some indices $i_0,j_0$ such that, where $\ep_{\K}\co \Z\to \K$ denotes the unique unital ring morphism, $\ep_{\K}(\Lambda_{\Z}(z_{-,i_0},z_{+,j_0}))\neq 0$.

Applying Proposition \ref{twoboundaries} to $z_{-,i_0}$ and $z_{+,j_0}$ gives an $(n-k-1)$-pseudoboundary $b_-$ and a $k$-pseudoboundary $b_+$ such that  $lk_{\K}(b_-,b_+)=\ep_{\K}(lk(b_-,b_+))=\ep_{\K}(\Lambda_{\Z}(z_{-,i_0},z_{+,j_0}))\neq 0$ and such that \[ \min(f|_{\overline{Im(b_-)}})-\max(f|_{\overline{Im(b_+)}})=-\ell_{-f}(z_{-,i_0})-\ell_f(z_{+,j_0})\geq -\ell_{-f}(z_-)-\ell_{f}(z_+)\] where the last inequality uses (\ref{elldec}).  
 
 Since $z_-\in Im(d_{-f,n-k}^{\K})$ and $z_+\in Im(d_{f,k+1}^{\K})$ were arbitrary elements subject to the condition that $\Lambda_{\K}(z_-,z_+)\neq 0$, it immediately follows that $\beta^{geom}_{k}(f;\K)\geq \beta^{alg}_{k}(f;\K)$.
\end{proof}

We also obtain the following, which shows that Corollary \ref{linkcor1} is sharp and completes the proof of Theorem \ref{main2}.

\begin{cor}\label{cormain2}
Let $\K$ be a field, and let $h$ be a metric such that the gradient flow of $f$ with respect to $h$ is Morse--Smale and such that $h$ is locally trivial.  Then the rank of the operator $d_{f,k+1}^{\K}\co CM_{k+1}(f;\K)\to CM_{k}(f;\K)$ is the largest integer $m$ such that there exist $b_{1,-},\ldots,b_{s,-}\in\mathcal{B}_{n-k-1}(M)$, $b_{1,+},\ldots,b_{r,+}\in\mathcal{B}_{k}(M)$ with the properties that for each $i,j$ we have $(b_{i,+},b_{j,-})\in\mathcal{T}_k(M,f)$ and the metric $h$ is generic with respect to $f,b_{i,+},b_{j,-}$, and that the matrix $L$ with entries given by \[ L_{ij}=lk_{\K}(b_{j,-},b_{i,+})-(-1)^{(n-k)(k+1)}\Pi(M_{-f},I_{b_{i,+},b_{j,-}}M_f)\] has rank $m$.  Moreover, given an integer $m$, if any such $b_{i,+}$ and $b_{j,-}$ exist, they may be chosen in such a way that $\Pi(M_{-f},I_{b_{i,+},b_{j,-}}M_f)=0$.
\end{cor}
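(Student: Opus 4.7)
The inequality $\rk(d_{f,k+1}^{\K}) \geq \rk L$ for any matrix $L$ arising from such a collection is exactly Corollary \ref{linkcor1}, so the plan is to construct, for $m := \rk(d_{f,k+1}^{\K})$, an explicit system of pseudoboundaries $b_{i,+} \in \mathcal{B}_k(M)$ and $b_{j,-} \in \mathcal{B}_{n-k-1}(M)$ ($1 \leq i,j \leq m$) satisfying $(b_{i,+},b_{j,-}) \in \mathcal{T}_k(M,f)$, generic with respect to $h$, whose matrix $L$ has rank $m$ and all of whose correction terms $\Pi(M_{-f},I_{b_{i,+},b_{j,-}}M_f)$ vanish. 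This simultaneously handles the ``$\geq$'' direction of the rank equality and the ``moreover'' statement.

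The first step is algebraic. Since $\K$ is a field, the adjoint relation \eqref{adjpi} realizes $CM_{n-*}(-f;\K)$ as the $\Pi$-dual of $CM_*(f;\K)$ with $d_{-f}$ dual (up to sign) to $d_f$, and from this I get that $\Lambda$ restricts to a \emph{perfect} pairing $\Lambda_{\K}\co Im(d_{-f,n-k}^{\K}) \times Im(d_{f,k+1}^{\K}) \to \K$: a nonzero $x = d_{-f}w \in Im(d_{-f,n-k}^{\K})$ satisfies $\Pi(w, d_f z) = \pm \Pi(x,z)$ by \eqref{adjpi}, and the function $z \mapsto \Pi(x,z)$, which factors through $y = d_f z$ and is (up to sign) $\Lambda(x,\cdot)$ by definition, cannot vanish on all of $CM_{k+1}(f;\K)$ by perfection of $\Pi$. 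Now Lemma \ref{extboundary} exhibits $Im(d^{\K})$ as the $\K$-span of (the images of) the integer boundaries $Im(d^{\Z})$; so I can first pick $z_{1,+}^\Z,\ldots,z_{m,+}^\Z \in Im(d_{f,k+1}^\Z)$ whose $\K$-reductions form a basis of $Im(d_{f,k+1}^\K)$, and then, by the non-degeneracy of $\Lambda_\K$ together with a Gaussian elimination among the spanning row vectors $[\Lambda_\K(z, z_{1,+}^\Z),\ldots,\Lambda_\K(z, z_{m,+}^\Z)]$ indexed by $z \in Im(d_{-f,n-k}^\Z)$, pick $z_{1,-}^\Z,\ldots,z_{m,-}^\Z \in Im(d_{-f,n-k}^\Z)$ so that the integer matrix $[\Lambda_\Z(z_{j,-}^\Z, z_{i,+}^\Z)]$ reduces modulo $\textup{char}\,\K$ to an invertible $m \times m$ matrix over $\K$.

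Writing $z_{i,+}^\Z = d_{f,k+1} a_{i,+}$ and $z_{j,-}^\Z = d_{-f,n-k} a_{j,-}$ with integer Morse chains $a_{i,+},a_{j,-}$, I would then apply Lemma \ref{chainconstruct} to each to produce pseudoboundaries $b_{i,+}\co B_{i,+}\to M$ and $b_{j,-}\co B_{j,-}\to M$. By Lemma \ref{chainconstruct}(ii) the closure of the image of $b_{i,+}$ lies in the union of unstable manifolds of index at most $k$, and that of $b_{j,-}$ in the union of stable manifolds of index at least $k+1$, so the Morse--Smale property forces the three disjointness conditions that make $(b_{i,+},b_{j,-}) \in \mathcal{T}_k(M,f)$. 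The index-matching calculation already worked out in the proof of Proposition \ref{twoboundaries}(ii) applies verbatim to give $lk(b_{j,-},b_{i,+}) = \Lambda_\Z(z_{j,-}^\Z, z_{i,+}^\Z)$, so the integer linking matrix reduces over $\K$ to the rank-$m$ matrix chosen above.

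Finally, to kill the correction terms I would replace each $b_{i,+}$ and $b_{j,-}$ by its composition with a single orientation-preserving diffeomorphism $\phi \in \Diff(M)$ chosen in the intersection of (i) the finite collection of residual subsets given by the first sentence of Proposition \ref{twoboundaries}(iii) (one per pair $(i,j)$) making $h$ generic with respect to $f,\phi\circ b_{i,+},\phi\circ b_{j,-}$ for every pair, and (ii) a $C^1$-neighborhood of the identity small enough that the second sentence of Proposition \ref{twoboundaries}(iii) forces $I_{\phi\circ b_{i,+},\phi\circ b_{j,-}} = 0$ for every pair; such a $\phi$ exists because a finite intersection of residual subsets is residual and hence meets every $C^1$-neighborhood of the identity. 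Integer linking numbers are invariant under applying the same orientation-preserving ambient diffeomorphism to both components, and the conditions of $\mathcal{T}_k(M,f)$ persist under sufficiently small perturbations, so the resulting system has $L_{ij} = lk_\K(\phi\circ b_{j,-},\phi\circ b_{i,+})$ with vanishing correction and linking matrix still of rank $m$. The main obstacle in the plan is arranging the perturbation $\phi$ to handle all $m^2$ pairs simultaneously (rather than pair by pair), but this is straightforward because only finitely many pairs are involved and the relevant residual and open conditions are each stable under finite intersection.
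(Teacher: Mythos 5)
Your overall route is correct and tracks the paper's proof closely: one direction is Corollary \ref{linkcor1}, and for the other direction you choose integer Morse boundaries whose $\K$-linking matrix is invertible, feed their primitives through the Lemma \ref{chainconstruct}/Proposition \ref{twoboundaries} machinery, and then move everything by a single generic diffeomorphism $\phi$ near the identity so that Proposition \ref{twoboundaries}(iii) both makes the metric generic and kills the correction terms $\Pi(M_{-f},I_{b_{i,+},b_{j,-}}M_f)$ for all $(i,j)$ simultaneously.

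One variation is genuinely different from the paper and is a clean touch: you use Lemma \ref{extboundary} to produce $z_{i,\pm}^\Z\in Im(d^\Z)$ whose image matrix is invertible over $\K$ for an arbitrary field $\K$ in one step, whereas the paper first observes that the rank is unchanged under field extension, reduces to $\K=\mathbb{Q}$ or $\K=\mathbb{Z}/p\Z$, and only then produces integer data by clearing denominators or choosing lifts. Both routes accomplish the same thing; yours avoids the extension argument at the cost of a slightly longer linear-algebra discussion.

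There is, however, an incorrect intermediate claim that you should fix. You assert that the Morse--Smale property already forces $(b_{i,+},b_{j,-})\in\mathcal{T}_k(M,f)$ for the pseudoboundaries produced directly by Lemma \ref{chainconstruct}, and later that ``the conditions of $\mathcal{T}_k(M,f)$ persist under sufficiently small perturbations.'' This is not true: Lemma \ref{chainconstruct}(ii) places $\overline{Im(b_{i,+})}$ inside $\bigcup_{|q|_f\le k}W^u_f(q)$, and by Lemma \ref{chainconstruct}(iii) the map is \emph{coincident} near the $q_j$'s to a disjoint union of inclusions of unstable manifolds, so $\overline{Im(b_{i,+})}$ actually \emph{contains} the index-$k$ critical points appearing in $d_f a_{i,+}$ (and likewise for $b_{j,-}$). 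The Morse--Smale condition only gives $\overline{Im(b_{i,+})}\cap\overline{Im(b_{j,-})}=\varnothing$; it says nothing about disjointness from $Crit(f)$. So membership in $\mathcal{T}_k(M,f)$ does not hold before applying $\phi$ and does not merely ``persist.'' Fortunately this slip is harmless: Proposition \ref{twoboundaries}(ii) only needs $\overline{Im(b_+)}\cap\overline{Im(b_-)}=\varnothing$ (which is genuine) to compute $lk(b_{j,-},b_{i,+})$, and ``$h$ is generic with respect to $f,\phi\circ b_{i,+},\phi\circ b_{j,-}$'' in Proposition \ref{twoboundaries}(iii) already presupposes (via hypothesis (C) preceding Definition \ref{genwrt}) that the pushed-forward images miss $Crit(f)$ --- the diffeomorphism $\phi$ is precisely what establishes all three disjointness conditions, not a step that preserves them.
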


\begin{proof}  The statement that the rank of $d_{f,k+1}^{\K}$ is at least equal to $m$ is proven in Corollary \ref{linkcor1}.  For the reverse inequality, note first that if the inequality holds for some field $\K_0$, then it must also hold for all field extensions of $\K_0$ since the relevant ranks are not affected by the field extension.  Therefore for the rest of the proof we may assume that $\K$ is equal either to $\mathbb{Q}$ or to $\mathbb{Z}/p\mathbb{Z}$ for some prime $p$, since any field is an extension of one of these.

Denote $m=rank(d_{f,k+1}^{\K})$.  Of course since $d_{-f,n-k}^{\K}$ is adjoint to $d_{f,k+1}^{\K}$ by (\ref{adjpi}), we also have $m=rank(d_{-f,n-k}^{\K})$.  Now the linking pairing $\Lambda_{\K}\co Im(d_{-f,n-k}^{\K})\times Im(d_{f,k+1}^{\K})\to\K$ is nondegenerate by the same argument as in the proof of Proposition \ref{alg-betaprop}: if $z=\sum_q z_q q\in Im(d_{f,k+1}^{\K})\setminus\{0\}$, then choosing any $q_0$ such that $z_{q_0}\neq 0$, we have $\Lambda_{\K}(d_{-f,n-k}q_0,z)\neq 0$.  Consequently since $\K$ is a field there are $x_{1,-},\ldots,x_{m,-}\in Im(d_{-f,n-k}^{\K})$ and $x_{1,+},\ldots,x_{m,+}\in Im(d_{f,k+1}^{\K})$ such that \begin{equation}\label{kron} \Lambda_{\K}(x_{j,-},x_{i,+})=\left\{\begin{array}{ll} 1 & \mbox{if }i=j,\\ 0 & \mbox{if }i\neq j.\end{array}\right. \end{equation}
Suppose that $\K=\mathbb{Q}$, so we may consider $Im(d_{-f,n-k}^{\Z})$ and $Im(d_{f,k+1}^{\Z})$ as subgroups of $Im(d_{-f,n-k}^{\K})$ and $Im(d_{f,k+1}^{\K})$, respectively.  Then for some nonzero integer $N$ each of the elements $z_{i,\pm}=Nx_{i,\pm}$ will belong to $Im(d_{-f,n-k}^{\Z})$ or  $Im(d_{f,k+1}^{\Z})$.  Apply Proposition \ref{twoboundaries} (using primitives $a_{i,\pm}$ for $z_{i,\pm}$) to obtain pseudoboundaries $b_{i,\pm}^{0}\co B_{i,\pm}\to M$ so that \[ lk(b_{j,-}^{0},b_{i,+}^{0})=\Lambda_{\Z}(z_{j,-},z_{i,+})=\left\{\begin{array}{ll} N^2 & \mbox{if }i=j,\\ 0 & \mbox{if }i\neq j,\end{array}\right.\] and, for generic diffeomorphisms $\phi$ which are $C^1$-close to the identity, $I_{\phi\circ b_{i,+}^{0},\phi\circ b_{j,-}^{0}}=0$.  Of course, for such a diffeomorphism $\phi$ we will have $lk(\phi\circ b_{j,-}^{0},\phi\circ b_{i,+}^{0})=lk(b_{j,-}^{0},b_{i,+}^{0})$.  So where $b_{i,+}=\phi\circ b_{i,+}^{0}$ and $b_{j,-}=\phi\circ b_{j,-}^{0}$, the matrix $L$ described in the proposition is $N^2$ times the $m\times m$ identity, and in particular has rank $m$.  This completes the proof in the case that $\K=\mathbb{Q}$.

Finally suppose that $\K=\mathbb{Z}/p\mathbb{Z}$ where $p$ is prime.  We again have $x_{i,\pm}$ as in (\ref{kron}).  Choose $a_{i,-}\in CM_{n-k}(-f;\Z)$ and $a_{i,+}\in CM_{k+1}(f;\Z)$ such that $d_{-f,n-k}^{\Z}a_{i,-}$ and $d_{f,k-1}^{\Z}a_{i,+}$ reduce modulo $p$ to, respectively, $x_{i,-}$ and $x_{i,+}$.  Applying Proposition \ref{twoboundaries} to obtain pseudoboundaries $b_{i,\pm}^{0}$, and then letting $b_{i,\pm}=\phi\circ b_{i,\pm}^{0}$ for a suitably generic diffeomorphism $\phi$ which is $C^1$-close to the identity, we see that $I_{b_{i,+},b_{j,-}}=0$ (over $\Z$, and hence also over $\Z/p\Z$), and \[ lk(b_{j,-},b_{i,+})=lk(b_{j,-}^{0},b_{i,+}^{0})=\Lambda_{\Z}(d_{-f,n-k}^{\Z}a_{j,-},d_{f,k-1}^{\Z}a_{i,+}).\]  But $\Lambda_{\Z}(d_{-f,n-k}^{\Z}a_{j,-},d_{f,k+1}^{\Z}a_{i,+})$ reduces modulo $p$ to $\Lambda_{\Z/p\Z}(x_{j,-},x_{i,+})$, which is $1$ when $i=j$ and $0$ otherwise.  Thus the matrix $L$ described in the proposition is the $m\times m$ identity, which has rank $m$.\end{proof}

\section{Some technical proofs} \label{app}

This final section contains proofs of Lemmas \ref{diffu}, \ref{genmet}, and \ref{mark2}.

 \begin{proof}[Proof of Lemma \ref{diffu}]  
This is a fairly standard sort of application of the Sard--Smale theorem \cite{S}; as in \cite{MS} a minor complication is caused by the fact that $\Diff_{S}(Y)$ is not a Banach manifold, but this is easily circumvented by first considering the Banach manifold $\Diff^{k}_{S}(Y)$ of $C^k$ diffeomorphisms supported in $S$ for sufficiently large integers $k$.

Namely, for any positive integer $k>\dim M+\dim N-\dim Y$ consider the map \begin{align*} \Theta\co \Diff^{k}_{S}(Y)\times M\times  N&\to Y\times Y
\\ (\phi,m,n)&\mapsto \left(\phi(f(m)),g(n)\right). \end{align*}  This is a $C^k$ map of $C^k$-Banach manifolds and we will show presently that it is transverse to $\Delta\subset Y\times Y$.  

Let $(\phi,m,n)\in \Theta^{-1}(\Delta)$, so that $\phi(f(m))=g(n)$.  If $\phi(f(m))\notin int(S)$, then since $\{y|\phi(y)\neq y\}$ is an open subset contained in $S$ we must have $\phi\left(\phi(f(m))\right)=\phi(f(m))$, and therefore $f(m)=\phi(f(m))\in Y\setminus int(S)$. Now since $\phi$ is the identity on the open set $Y\setminus S$, the linearization $\phi_*$ acts as the identity at every point of $Y\setminus S$, and therefore (by continuity) also at every point of $\overline{Y\setminus S}=Y\setminus int(S)$.  In particular $\phi_*\co T_{f(m)}Y\to T_{f(m)}Y$ is the identity.  Consequently our assumption on $S$ implies that $\left((\phi\times f)\times g\right)_*\co T_m M\times T_n N\to T_{(f(m),f(m))}Y\times Y$ is already transverse to $\Delta$, and so $\Theta$ is certainly transverse to $\Delta$ at $(\phi,m,n)$.

There remains the case that $\phi(f(m))\in int(S)$.  But then a small perturbation of $\phi$ in $\Diff_{S}^{k}(Y)$ can be chosen which moves $\phi(f(m))$ in an arbitrary direction in $Y$; in other words, there are elements of form $(\xi,0,0)\in T_{\phi}\Diff_{S}^{k}(Y)\oplus T_{m}M\oplus T_n N$ such that $\Theta_*(\xi,0,0)$ is equal to an arbitrary element of $T_{\phi(f(m))}Y\times\{0\}\leq T_{(\phi(f(m)),\phi(f(m)))}(Y\times Y)$.   So since 
$T_{\phi(f(m))}Y\times\{0\}$ is complementary to $T_{(\phi(f(m)),\phi(f(m)))}\Delta$ in $T_{(\phi(f(m)),\phi(f(m)))}(Y\times Y)$ this proves that $\Theta$ is transverse to $\Delta$.

Consequently the implicit function theorem for Banach manifolds shows that $\Theta^{-1}(\Delta)$ is a $C^k$-Banach submanifold of $\Diff^{k}_{S}(Y)\times M\times  N$.  The projection $\pi\co \Theta^{-1}(\Delta)\to \Diff^{k}_{S}(Y)$ is Fredholm of index $\dim M+\dim N-\dim Y$ (which we arranged to be less than $k$), and so the Sard--Smale theorem applies to show that the set of regular values of $\pi$ is residual in $\Diff^{k}_{S}(Y)$.  Moreover a standard argument (see for instance the proof of \cite[Proposition 2.24]{S93}) shows that $\phi\in \Diff^{k}_{S}(Y)$ is a regular value for $\pi$ if and only if the restriction $\Theta|_{\{\phi\}\times M\times N}$ is transverse to $\Delta$.

This shows that, for all positive integers $k>\dim M+\dim N-\dim Y$, the set $\mathcal{S}^k$ of $\phi\in \Diff^{k}_{S}(Y)$ such that $(m,n)\mapsto (\phi(f(m)),g(n))$ is transverse to $\Delta$ is residual in $\Diff^{k}(S)$.  To complete the proof of the lemma it remains only to replace the integer $k$ by $\infty$, which we achieve by an argument adapted from \cite[p. 53]{MS}.  Write $M=\cup_{r=1}^{\infty}M_r$ and  $N=\cup_{s=1}^{\infty}N_s$ where each $M_r$ and $N_s$ is compact, and let \[ \mathcal{S}_{rs}=\{\phi\in \Diff_{S}(Y)|\left((\phi\circ f)\times g\right)\mbox{ is transverse to $\Delta$ at all points of $M_r\times N_s$}\}.\]  For each $r,s\in\mathbb{Z}_+$, $\mathcal{S}_{rs}$ is easily seen to be open in  the $C^1$ (and so also the $C^k$ for all $1\leq k\leq \infty$) topology on $\Diff_{S}(Y)$.     Likewise the set \[  
\mathcal{S}^{k}_{rs}=\{\phi\in \Diff^{k}_{S}(Y)|\left((\phi\circ f)\times g\right)\mbox{ is transverse to $\Delta$ at all points of $M_r\times N_s$}\}
\] is open in the $C^k$-topology on $\Diff^{k}_{S}(Y)$.

We now show that $\mathcal{S}_{rs}$ is dense in $\Diff_{S}(Y)$.  Let $\phi_{\infty}\in \Diff_{S}(Y)$ be arbitrary.  For any sufficiently large integer $k$, since $\mathcal{S}^k=\cap_{r,s}\mathcal{S}^{k}_{rs}$ is residual and therefore dense in $\Diff^{k}_{S}(Y)$ there is $\phi_k\in \mathcal{S}^k$ such that $d_{C^k}(\phi_k,\phi_{\infty})<3^{-k}$, where $d_{C^k}$ denotes $C^k$ distance (with respect to an arbitrary auxiliary Riemannian metric; since our diffeomorphisms are the identity off a fixed compact set, different choices of Riemannian metrics will result in uniformly equivalent distances $d_{C^k}$).  Now the smooth diffeomorphisms $\Diff_{S}(Y)$ are dense in $\Diff_{S}^{k}(Y)$, and $\mathcal{S}^{k}_{rs}$ is open, so there is $\phi'_k\in \mathcal{S}_{rs}=\mathcal{S}^{k}_{rs}\cap \Diff_{S}(Y)$ arbitrarily $C^k$-close to $\phi_k$; in particular this allows us to arrange that $d_{C^k}(\phi'_k,\phi_{\infty})< 2^{-k}$.  Letting $k$ vary, we have constructed a sequence $\{\phi'_k\}$ in $\mathcal{S}_{rs}$ such that $d_{C^k}(\phi'_k,\phi_{\infty})< 2^{-k}$,   which implies that the $\phi'_k$ converge to $\phi_{\infty}$ in the $C^{\infty}$ topology.  Thus $\mathcal{S}_{rs}$ is indeed dense in $\Diff_{S}(Y)$.  Since we have already shown that $\mathcal{S}_{rs}$ is open, this proves that the countable intersection $\mathcal{S}=\cap_{r,s}\mathcal{S}_{rs}$ is residual, as desired.
\end{proof}

\begin{proof}[Proof of Lemma \ref{genmet}] The argument is similar to that in \cite[Lemma 4.10]{S99}.   Let $p,q\in Crit(f)$.  Of course the fiber product is empty in case $p=q$, so from now on we assume $p\neq q$.  In \cite[Appendix A]{S93} Schwarz constructs a Banach manifold $\mathcal{P}_{p,q}^{1,2}(\mathbb{R},M)$ consisting of class $H^{1,2}$ maps $\gamma\co \R\to M$ suitably asymptotic to $p$ as $t\to -\infty$ and to $q$ as $t\to +\infty$.  Moreover there is a vector bundle $\mathcal{E}_{p,q}\to  \mathcal{P}_{p,q}^{1,2}(\mathbb{R},M)$ whose fiber over $\gamma\in \mathcal{P}_{p,q}^{1,2}(\mathbb{R},M)$ is $\gamma^* TM$, and the section \begin{align*} \Phi\co \mathcal{G}\times \mathcal{P}_{p,q}^{1,2}(\mathbb{R},M)&\to L^{2}_{\mathbb{R}}(\mathcal{E}_{p,q})  \\ (h,\gamma)&\mapsto \dot{\gamma}+(\nabla^h f)\circ \gamma \end{align*} is shown to be smooth as a map of Banach manifolds and to be transverse to the zero-section on \cite[p. 47]{S93}.

Write \[ \tilde{\mathcal{M}}^{univ}(p,q;f)=\{(h,\gamma)\in \Phi\co \mathcal{G}\times \mathcal{P}_{p,q}^{1,2}(\mathbb{R},M)| \Phi(h,\gamma)=0\}.\]  In other words, $\tilde{\mathcal{M}}^{univ}(p,q;f)$ consists of those pairs $(h,\gamma)$ where $\gamma$ is a negative $h$-gradient flowline of $f$ asymptotic in large negative time to $p$ and in large positive time to $q$.  Since $\Phi$ is transverse to the zero-section, $\tilde{\mathcal{M}}^{univ}(p,q;f)$ is a smooth Banach manifold.  Where $\pi_{p,q}\co \tilde{\mathcal{M}}^{univ}(p,q;f)\to \mathcal{G}$ is the projection, the statement that $h\in\mathcal{G}$ is a regular value of $\pi_{p,q}$ for each $p,q$ is equivalent to the statement that the negative gradient flow of $f$ with respect to the metric $h$ is Morse--Smale (see \cite[pp. 43--45]{S93} for more details).  

We  have a map $\tilde{E}_k\co \tilde{\mathcal{M}}^{univ}(p,q;f)\times \mathbb{R}_{+}^{k-1}\to M^k$ defined by \[ \tilde{E}_k(\gamma,h,t_1,\ldots,t_{k-1})=\left(\gamma(0),\gamma(t_1),\ldots,\gamma\left(\sum_{i=1}^{k-1}t_i\right)\right),\] and we now claim that $\tilde{E}_k$ is a submersion.  Indeed, more specifically, we claim that for any $(\gamma,h,t_1,\ldots,t_{k-1})\in \tilde{\mathcal{M}}^{univ}(p,q;f)\times \mathbb{R}_{+}^{k-1}\to M^k$, writing $s_j=\sum_{i=1}^{j}t_i$ for $j=0,\ldots,k-1$, the linearization of $\tilde{E}_k$ at $(\gamma,h,t_1,\ldots,t_{k-1})$ restricts to $T_{(\gamma,h)}\tilde{\mathcal{M}}^{univ}(p,q;f)\times\{\vec{0}\}$ as a surjection to $\prod_{j=0}^{k-1}T_{\gamma(s_{j})}M$.  As in \cite[(4.15)]{S99}, with respect to a suitable frame along $\gamma$ the linearization of the operator $\Phi\co  \mathcal{G}\times \mathcal{P}_{p,q}^{1,2}(\mathbb{R},M)\to L^{2}_{\mathbb{R}}(\mathcal{E}_{p,q})$ takes the form $\Phi_*(\xi,A)=\dot{\xi}+S(t)\xi + A\cdot\nabla^h f$.  Here $\xi$ varies through $H^{1,2}(\gamma^*TM)\cong H^{1,2}(\mathbb{R},\R^n)$ and $A$ varies through a Banach space consisting of smooth sections (and containing  in particular all compactly supported smooth sections) of the bundle of symmetric endomorphisms of $\gamma^*TM$.  Moreover $t\mapsto S(t)$ is a certain smooth path of symmetric operators on $\mathbb{R}^n$.  To prove our claim we need to check that if $v_j\in T_{\gamma(s_j)}M$ are arbitrary vectors then there is an element $(\xi,A)\in \ker\Phi_*$ such that $\xi(s_j)=v_j$ for each $j=0,\ldots,k-1$.  Now $\gamma$ is a nonconstant (since $p\neq q$) flowline of $-\nabla^h f$, and so the points $\gamma(s_j)$ are all distinct, and $\nabla^h f$ is nonvanishing at each $\gamma(s_j)$.  But then we can simply choose $\xi\in H^{1,2}(\gamma^*TM)$ to be an arbitrary smooth section which is compactly supported in a union of small disjoint neighborhoods of the various $s_j$, and such that $\xi(s_j)=v_j$.  Having chosen this $\xi$, since $\nabla^h f$ is nonvanishing on the support of $\xi$ it is straightforward to find a section $A$ of the bundle of symmetric endomorphisms of $\gamma^*TM$, having the same compact support as $\xi$, with the property that $A\cdot\nabla^h f=-\dot{\xi}-S(t)\xi$ everywhere.  This pair $(\xi,A)$ will be as desired, confirming that $\tilde{E}_k$ is a submersion.

In view of this, given our maps $g_i\co V_i\to M$, the fiber product \[ \mathcal{V}^{univ}(p,q,f,g_0,\ldots,g_{k-1};h)=\left( V_0\times\cdots V_{k-1} \right){}_{g_0\times \cdots\times g_{k-1}}\times_{\tilde{E}_k}\left(\tilde{\mathcal{M}}^{univ}(p,q;f)\times \mathbb{R}_{+}^{k-1}\right) \] is cut out transversely, and so is a Banach manifold.  If the metric $h\in\mathcal{G}$ is a regular value for the projection $\pi_{\mathcal{V},p,q}\co \mathcal{V}^{univ}(p,q,f,g_0,\ldots,g_{k-1};h)\to \mathcal{G}$, then the original fiber product \linebreak $\mathcal{V}(p,q,f,g_0,\ldots,g_{k-1};h)$
appearing in the proposition will be cut out transversely.   Using the Sard--Smale theorem, the residual subset of the proposition is then given by the intersection of the sets of regular values of $\pi_{\mathcal{V},p,q}$ as $p$ and $q$ vary through $Crit(f)$ with the sets of regular values of the maps $\pi_{p,q}$ from the second paragraph of the proof.
\end{proof}

\begin{proof}[Proof of Lemma \ref{mark2}]
First we need to construct $\overline{\tilde{\mathcal{M}}(p,q;f)\times (0,\infty)}$ as a manifold with boundary by providing collars for the various parts of the boundary $C_1,\ldots,C_6$, in such a way that $E_1$ extends smoothly  to the boundary in accordance with the formulas given in the lemma.    To prepare for this, let us recall some features of the trajectory spaces $\tilde{\mathcal{M}}(x,y;f)$ and of the gluing map constructed in \cite[Section 2.5]{S93}.  

Assuming that $x,y\in Crit(f)$ with $x\neq y$ and $\tilde{\mathcal{M}}(x,y;f)\neq\varnothing$, so that in particular $f(y)<f(x)$, choose a regular value $a$ for $f$ with $f(y)<a<f(x)$.  Where for a trajectory $\gamma\in \tilde{\mathcal{M}}(x,y;f)$ we denote its equivalence class in $\mathcal{M}(x,y;f)$ by $[\gamma]$, the choice of $a$ induces an orientation-preserving diffeomorphism $\alpha_{a,x,y}\co \tilde{\mathcal{M}}(x,y;f)\to \mathcal{M}(x,y;f)\times\R$ defined by $\alpha_{a,x,y}(\gamma)=\left([\gamma],s_{a.\gamma}\right)$, where $s_{a,\gamma}$ is the real number characterized by the property that $f(\gamma(-s_{a,\gamma}))=a$.  For any $s\in\R$ and $\gamma\in \tilde{\mathcal{M}}(x,y;f)$ define $\sigma_s\gamma\in\tilde{\mathcal{M}}(x,y;f)$ by \[ (\sigma_s\gamma)(t)=\gamma(s+t).\]  Then if for an element $[\gamma]\in \mathcal{M}(x,y;f)$ we write $\gamma_0$ for the unique representative of $[\gamma]$ such that $f(\gamma_0(0))=a$, the inverse of $\alpha_{a,x,y}$ is given by $\alpha_{a,x,y}^{-1}([\gamma],s)=\sigma_s\gamma_0$.

Now let $r\in Crit(f)$ be any critical point distinct from $p$ and $q$ such that $\mathcal{M}(p,r;f)\times\mathcal{M}(r,q;f)$ is nonempty.  Choose regular values $a$ and $b$ of $f$ such that $f(q)<a<f(r)<b<f(p)$.  Then if $V$ is any open subset of $\mathcal{M}(p,r;f)\times\mathcal{M}(r,q;f)$ such that $\bar{V}$ is compact, \cite[Proposition 2.56]{S93} gives a number $\rho_V>0$ and a smooth embedding $\#_V\co (\rho_V,\infty)\times V\to \mathcal{M}(p,q;f)$ having the following features.  For an element $([\gamma],[\eta])\in V\subset \mathcal{M}(p,r;f)\times\mathcal{M}(r,q;f)$ choose the unique representatives $\gamma\in\tilde{\mathcal{M}}(p,r;f)$ and $\eta\in\tilde{\mathcal{M}}(r,q;f)$ such that $\gamma(0)=b$ and $\eta(0)=a$.  Then a suitable representative $\gamma\#_{\rho}\eta$ of $\#(\rho,[\gamma],[\eta])$ has the property that, on any fixed compact subset of $\R$, 
$\sigma_{-\rho}(\gamma\#_{\rho}\eta)\to \gamma$ uniformly exponentially fast as $\rho\to\infty$, and $\sigma_{\rho}(\gamma\#_{\rho}\eta)\to \eta$ uniformly exponentially fast as $\rho\to\infty$ (with the constants independent of the choice of $([\gamma],[\eta])$ from the precompact subset $V$).  

Furthermore, if $V_1$ and $V_2$ are two open subsets of $\mathcal{M}(p,r;f)\times\mathcal{M}(r,q;f)$ each with compact closure, the gluing maps $\#_{V_1}$ and $\#_{V_2}$ coincide on their common domain of definition (as follows from examination of the construction and was also noted in \cite[Proof of Lemma 4.4]{S99}).  Consequently if $\{\chi_{\beta}\}$ is a partition of unity subordinate to an open cover $\{V_{\beta}\}$ of $\mathcal{M}(p,r;f)\times\mathcal{M}(r,q;f)$ by open sets with compact closure, and if we define $\rho_0\co \mathcal{M}(p,r;f)\times\mathcal{M}(r,q;f)\to \R$ by  $\rho_0=\sum_{\beta}\chi_{\beta}\rho_{\beta}$, then the gluing maps $\#_{V_{\beta}}$ piece together to give a smooth map \begin{equation}\label{glue} \#\co \{(\rho,[\gamma],[\eta])\in \R\times \mathcal{M}(p,r;f)\times\mathcal{M}(r,q;f)|\rho>\rho_0([\gamma],[\eta])\}\to \mathcal{M}(p,q;f),\end{equation} which (in view of the convergence properties of the $\gamma\#_{\rho}\eta$) can be arranged to be an embedding after possibly replacing $\rho_0$ by a larger smooth function.  

With respect to our orientation conventions from Section \ref{or:morse}, the gluing map $\#$  can be seen to affect the orientation by multiplication by $(-1)^{|p|_f-|r|_f-1}$ (see also \cite[A.1.14]{BC}).

We now use these facts to produce collars for the parts $C_1,\ldots,C_6$ of $\partial \overline{\tilde{\mathcal{M}}(p,q;f)\times (0,\infty)}$.
More specifically, for each $i$ we will construct, for a suitable smooth function $\ep_i\co C_i\to (0,\infty)$, a smooth embedding \[ \psi_i\co \{(t,x)\in\R\times C_i|0<t<\ep_i(x)\}\times C_i\to \tilde{\mathcal{M}}(p,q;f)\times (0,\infty),\] such that $E_1\circ\psi_i$ extends smoothly to $\{0\}\times C_i$ in a way that agrees with the formulas for $\bar{E}_1|_{C_i}$ in the statement of the lemma. Let $\hat{C}_i=\{(t,x)\in\R\times C_i|0\leq t<\ep_i(x)\}$, so that $\hat{C}_i$ has the structure of a manifold with boundary $\{0\}\times C_i$. We can then set \[ \overline{\tilde{\mathcal{M}}(p,q;f)\times (0,\infty)}=\frac{\hat{C}_1\sqcup \hat{C}_2\sqcup\cdots\sqcup \hat{C}_6\sqcup \left(\tilde{\mathcal{M}}(p,q;f)\times(0,\infty)\right)}{z\sim \psi_i(z)\mbox{ for }z\in \hat{C}_i\setminus \partial\hat{C}_i,\,i=1,\ldots,6}.\]  
This will be a Hausdorff topological space, since our formulas imply that the continuous extension $\bar{E}_1\co\overline{\tilde{\mathcal{M}}(p,q;f)\times (0,\infty)}\to M\times M$ of $E_1$ is injective, and any space that admits an injective continuous map to a Hausdorff space is Hausdorff.  The $\psi_i$ will be diffeomorphisms to their images by dimensional considerations, so 
$\overline{\tilde{\mathcal{M}}(p,q;f)\times (0,\infty)}$ will inherit a smooth manifold-with-boundary atlas from $\tilde{\mathcal{M}}(p,q;f)\times (0,\infty)$ and from the $\hat{C}_i$, making $\bar{E}_1$ a smooth function.  Since if $C_i$ is oriented, one has $\partial\hat{C}_i=-C_i$ as oriented manifolds (as we use the outer-normal-first convention), the boundary orientation of $C_i$ induced by 
the orientation of $\tilde{\mathcal{M}}(p,q;f)\times (0,\infty)$ will be the orientation of $C_i$ that makes $\psi_i$ into an orientation-\emph{reversing} embedding.

So we now construct the $\psi_i$, starting with $\psi_1$. Let $r\in Crit(f)$ with $|r|_f=|p|_f-1$ and $\mathcal{M}(p,r;f)\times\tilde{\mathcal{M}}(r,q;f)\neq\varnothing$, and let $a$ and $b$ be  regular values of $f$ with $f(q)<a<f(r)<b<f(p)$, inducing an orientation-preserving diffeomorphism $\alpha_{a,r,q}\co\tilde{\mathcal{M}}(r,q;f)\to \mathcal{M}(r,q;f)\times\R$ and a gluing map $\#$ as in (\ref{glue}).  Recall that the map $\#$ lifts to a map into $\tilde{\mathcal{M}}(p,q;f)$, given by $(\rho,[\gamma],[\eta])\mapsto \gamma\#_{\rho}\eta$ where the representatives $\gamma$ and $\eta$ are chosen so that $f(\gamma(0))=b$ and $f(\eta(0))=a$.  Using $\alpha_{a,r,q}$ to identify $\tilde{\mathcal{M}}(r,q;f)$ with $\mathcal{M}(r,q;f)\times\R$, the part of our collar $\psi_1$ corresponding to the critical point $r$ is the map \[ 
 \left\{\left.(\delta,[\gamma],[\eta],s,T)\in (0,\infty)\times\mathcal{M}(p,r;f)\times\mathcal{M}(r,q;f)\times \R\times(0,\infty)\right| 0<\delta<\frac{1}{\rho_0([\gamma],[\eta])}\right\}\to  \tilde{\mathcal{M}}(p,q;f)\times (0,\infty) \] defined by \[ \psi_1\left(\delta,[\gamma],[\eta],s,T\right)=\left(\sigma_{s+\delta^{-1}}(\gamma\#_{\delta^{-1}}\eta),T\right).\]  The fact that the map $\#$ of (\ref{glue}) is a smooth embedding readily implies that $\psi_1$ is a smooth embedding as well (at least after possibly lowering the upper limit on $\delta$ to prevent overlap between the images of maps from overlapping for different choices of the finitely many $r$).  Our identification of $\mathcal{M}(r,q;f)\times\R$ with $\tilde{\mathcal{M}}(r,q;f)$ has $([\eta],s)$ corresponding to $\sigma_s\eta$, so the fact that $\sigma_{s+\delta^{-1}}(\gamma\#_{\delta^{-1}}\eta)$ converges exponentially quickly on any compact subset of $\R$ to $\sigma_s\eta$ as $\delta^{-1}\to\infty$ readily implies that the function $E_1\circ\psi_1$ extends smoothly to $\{0\}\times C_1\subset \hat{C}_1$ by the formula stated in the lemma.  (The exponential nature of the convergence yields, on compact subsets of $C_1$, uniform estimates $dist(E_1\circ\psi_1(\delta,z),\bar{E}_1|_{C_1}(z))\leq Be^{-\beta/\delta}$, which ensures smoothness up to the boundary, with normal derivatives of all orders vanishing.)  As for the orientation, using the orientation preserving identification $\alpha_{a,p,q}\co \tilde{\mathcal{M}}(p,q;f)\cong \mathcal{M}(p,q;f)\times \R$ and the fact that (since $|p|_f=|r|_f+1$ in this case) the gluing map $(\delta,[\gamma],[\eta])\mapsto ([\gamma\#_{\delta^{-1}}\eta])$ is orientation-reversing, it is clear that $\psi_1$ is orientation-reversing.  Consequently $C_1$'s orientation as part of the boundary of $\overline{\tilde{\mathcal{M}}(p,q;f)\times(0,\infty)}$ coincides with its usual orientation.
 
The construction of $\psi_2$ is very similar to that of $\psi_1$: Given $r\in Crit(f)$ with $|r|_f=|q|_f+1$ and $\mathcal{M}(r,q;f)\neq\varnothing$, choose a regular value $b$ with $f(r)<b<f(p)$, inducing an identification $\alpha_{b,p,r}\co \tilde{\mathcal{M}}(p,r;f)\cong \mathcal{M}(p,r;f)\times\R$.  With respect to this identification, for $0<\delta<\frac{1}{\rho_0([\gamma],[\eta])}$ define \begin{equation}\label{psi2} \psi_2(\delta,[\gamma],s,[\eta],T)=(\sigma_{s-\delta^{-1}}(\gamma\#_{\delta^{-1}}\eta),T) \end{equation} where the representatives $\gamma$ and $\eta$ are chosen just as in the definition of $\psi_1$.  The exponential convergence of $\sigma_{-\delta^{-1}}(\gamma\#_{\delta^{-1}}\eta)$ to $\gamma$ on compact subsets can be seen to imply that this $\psi_2$ has the properties that we require.  The boundary orientation of $C_2$ may be computed by switching the positions of the parameters $s$ and $[\eta]$ in the domain and using the facts that the gluing map $\#$ of (\ref{glue}) affects the orientation by a sign $(-1)^{|p|_f-|r|_f-1}=(-1)^{|p|_f-|q|_f}$, and that $\mathcal{M}(p,r;f)$ is zero-dimensional.

As for $\psi_3$, now let $r$ be any critical point distinct from $p$ and $q$ such that $\tilde{\mathcal{M}}(p,r;f)\times\tilde{\mathcal{M}}(r,q;f)\neq\varnothing$, and as usual choose regular values $a$ and $b$ with $f(q)<a<f(r)<b<f(p)$.  This induces an orientation-preserving diffeomorphism \[ \alpha_{b,p,r}\times  \alpha_{a,r,q}\co \tilde{\mathcal{M}}(p,r;f)\times \tilde{\mathcal{M}}(r,q;f)\cong \mathcal{M}(p,r;f)\times\R\times \mathcal{M}(r,q;f)\times\R.\]
With respect to this identification, define, for $([\gamma],[\eta])\in \tilde{\mathcal{M}}(p,r;f)\times \tilde{\mathcal{M}}(r,q;f)$ and $0<\delta<\rho_0([\gamma],[\eta])$, \[ \psi_3(\delta,[\gamma],s,[\eta],u)=\left(\sigma_{s-\delta^{-1}}(\gamma\#_{\delta^{-1}}\eta),2\delta^{-1}-s+u\right),\] where as usual the representatives $\gamma$ and $\eta$ are chosen so that $f(\gamma(0))=b$ and $f(\eta(0))=a$.  The  convergence of $\sigma_{-\delta^{-1}}(\gamma\#_{\delta^{-1}}\eta)$ to $\gamma$ on compact subsets gives that $\sigma_{s-\delta^{-1}}(\gamma\#_{\delta^{-1}}\eta)(0)$ converges to $\sigma_s\gamma(0)$ as $\delta\to 0$ for all $s$, and the  convergence of $\sigma_{\delta^{-1}}(\gamma\#_{\delta^{-1}}\eta)$ to $\eta$ on compact subsets gives that $\sigma_{s-\delta^{-1}}(\gamma\#_{\delta^{-1}}\eta)(2\delta^{-1}-s+u)$ converges to $\sigma_u\eta(0)$ as $\delta\to 0$ for all $u$.  This implies that $E_1\circ\psi_3$ extends continuously to $C_3\times\{0\}$ in the manner asserted in the statement of the lemma.  The fact that $\psi_3$ is an embedding (at least after appropriately shrinking the domain) and that the extension of $E_1$ is smooth follows just as in the case of $\psi_1$.  To compute the boundary orientation of $C_3$, note that moving the parameter $s$ past $[\eta]$ in the domain leads to a sign $(-1)^{|r|_f-|q|_f-1}$, which when combined with the usual sign coming from the gluing map $\#$ leads to the boundary orientation of $C_3$ being $(-1)^{|p|_f-|q|_f}$ times its usual orientation, as stated in the lemma.

For $i=4,5,6$ we have $C_i=\tilde{\mathcal{M}}(p,q;f)$, and we can use the following rather simpler collars $\psi_i\co (0,1)\times \tilde{\mathcal{M}}(p,q;f)\to  \tilde{\mathcal{M}}(p,q;f)\times(0,\infty)$:
\begin{align*}
\psi_4(\delta,\gamma)&=(\gamma,\delta)  \\ 
\psi_5(\delta,\gamma)&=(\sigma_{-\delta^{-1}}\gamma,\delta^{-1}) \\
\psi_6(\delta,\gamma)&=(\gamma,\delta^{-1})
\end{align*}

That $\psi_4,\psi_5,\psi_6$ satisfy the required properties and induce the stated orientations is in each case straightforward; perhaps the only point to mention is that the fact that the extension of $E_1$ is smooth up to the boundary along $C_5$ and $C_6$ follows from the fact that any $\gamma\in \tilde{\mathcal{M}}(p,q;f)$ has $\gamma(t)\to p$ exponentially fast as $t\to -\infty$, and $\gamma(t)\to q$ exponentially fast as $t\to\infty$ (see \emph{e.g.} \cite[Lemma 2.10]{S93}).

This completes the construction of $\overline{\tilde{\mathcal{M}}(p,q;f)\times(0,\infty)}$; it remains to show that the $\Omega$-limit set of $\bar{E}_1$ is as described.  In other words we need to show that if $\{(\gamma_n,t_n)\}_{n=1}^{\infty}$ is any sequence in $\tilde{\mathcal{M}}(p,q;f)\times(0,\infty)$ then after passing to a subsequence $\{(\gamma_n,t_n)\}_{n=1}^{\infty}$ will either converge in 
$\overline{\tilde{\mathcal{M}}(p,q;f)\times(0,\infty)}$ or else will have the property that $E_1(\gamma_n,t_n)=(\gamma_n(0),\gamma_n(t_n))$ converges to a point in one of the sets described in (i)-(iv) of the statement of the Lemma.  (Since $\tilde{\mathcal{M}}(p,q;f)\times(0,\infty)$ is dense in $\overline{\tilde{\mathcal{M}}(p,q;f)\times(0,\infty)}$ we need only consider sequences in $\tilde{\mathcal{M}}(p,q;f)\times(0,\infty)$).

So let $\{(\gamma_n,t_n)\}_{n=1}^{\infty}$ be a sequence in $\tilde{\mathcal{M}}(p,q;f)\times(0,\infty)$.  By the basic compactness result \cite[Proposition 2.35]{S93}, we may pass to a subsequence such that, for some $\nu\in\{1,\ldots,|p|_f-|q|_f\}$, some critical points $p=p_0,p_1,\ldots,p_{\nu}=q$ of $f$, some trajectories $\gamma^j\in \tilde{\mathcal{M}}(p_{j-1},p_j;f)$, and some sequences $\{\tau_{n,j}\}_{n=1}^{\infty}$ in $\R$ for $j=1,\ldots,\nu$, we have for each $j$, \begin{equation}\label{recvg} \sigma_{\tau_{n,j}}\gamma_n\to \gamma^j \quad \mbox{uniformly with all derivatives on each compact subset of $\mathbb{R}$}.\end{equation}
(In this case $\{\gamma_n\}_{n=1}^{\infty}$ is said to converge weakly to the broken trajectory $(\gamma^1,\gamma^2)$.)

These conditions continue to hold if we remove all constant trajectories $\gamma^j$ from consideration, so without loss of generality we assume that each $\gamma^j$ is nonconstant, so that $p_j\neq p_{j-1}$ for all $j$.

Since the values $f(\gamma_n(0))$ and $f(\gamma_n(t_n))$ are confined to the compact interval $[f(q),f(p)]$ and have $f(\gamma_n(0))>f(\gamma_n(t_n))$, and since $f$ is exhausting, by passing to a further subsequence we may assume that $\gamma_n(0)\to x_0$ and $\gamma_n(t_n)\to x_T$ for some $x_0,x_T\in M$ with $f(q)\leq f(x_T)\leq f(x_0)\leq f(p)$.  

We may then choose $j\in \{1,\ldots,\nu\}$ such that $f(p_j)\leq f(x_0)\leq f(p_{j-1})$.  Passing to a further subsequence, we may assume that $\{\tau_{n,j}\}_{n=1}^{\infty}$  either converges to a limit $-\tau_0$ or diverges to $+\infty$ or diverges to $-\infty$.  In the first case we obtain by (\ref{recvg}) that \[ \gamma_n(0)=\sigma_{\tau_{n,j}}\gamma_n(-\tau_{n,j})\to \gamma^j(\tau_0)\mbox{ as }n\to\infty,\]  and thus $x_0\in e_{p_{j-1},p_j}\left(\tilde{\mathcal{M}}(p_{j-1},p_j;f)\right)$.  Suppose that instead $\tau_{n,j}\to +\infty$.  Then for any given $t\in\R$, for large enough $n$ we will have \[ f\left(\gamma_n(0)\right)=f\left((\sigma_{\tau_{n,j}}\gamma_n)(-\tau_{n,j})\right)\geq f\left((\sigma_{\tau_{n,j}}\gamma_n)(t)\right).\] Thus $f(x_0)\geq f(\gamma^j(t))$ for all $t\in\R$, and since $j$ was chosen so that $f(x_0)\leq f(p_{j-1})$ this forces $f(x_0)=f(p_{j-1})$.  
We will now show that, continuing to assume that $\tau_{n,j}\to +\infty$, we in fact have $x_0=p_{j-1}$.  For any small open ball $B$ around $p_{j-1}$ and any $\ep>0 $ there is $T>0$ such that $\gamma^j(-T)\in B$ and  $f(\gamma^j(-T))>f(p_{j-1})-\ep$, and therefore for large enough $n$ we will have  $\gamma_n(\tau_{n,j}-T)\in B$ and $f\left(\gamma_n(\tau_{n,j}-T)\right)>f(p_{j-1})-\ep$.  So since $\gamma_n$ is a negative gradient trajectory of $f$ and $f(\gamma_n(0))<f(p_{j-1})+\ep$ for large enough $n$ we have  \[ \int_{0}^{\tau_{n,j}-T}\|\dot{\gamma_n}(t)\|^2dt=f(\gamma_n(0))-f(\gamma_n(\tau_{n,j}-T))<2\ep \] for all sufficiently large $n$.  Now  if $x_0=\lim \gamma_n(0)$ were not equal to $p_{j-1}$,  we
 could find\footnote{Specifically, choose disjoint balls around all of the critical points of $f$ with critical value at most $f(p_{j-1})+1$ and also a ball around $x_0$, let $\delta$ be the infimum of $\|\nabla f\|$  in $\{x|f(x)\leq f(p_{j-1})+1\}$ off of these balls, and let $D$ be the minimal distance between any two of the balls.} $\ep$-independent constants $\delta,D>0$ and disjoint balls $B$ around $p_{j-1}$ and $B'$ around $x_0$ such that for any path $\eta\co [0,R]\to \{x|f(x)\leq f(p_{j-1})+1\}$ beginning in $B'$ and ending in $B$ there would be a segment $\eta|_{[r_1,r_2]}$ having length at least $D$ and such that $\|\nabla f(\eta(t))\|\geq \delta$ for all $t\in [r_1,r_2]$.
 In particular for large $n$ this would apply to $\eta=\gamma_n|_{[0,\tau_{n,j}-T]}$, where $T$ has been chosen based on an arbitrary $\ep>0$ as above. We would then obtain \[ 2\ep>\int_{r_1}^{r_2}  \|\nabla f(\gamma_n(t))\|^2dt\geq (r_2-r_1)\delta^2,\] so $r_2-r_1<\frac{2\ep}{\delta^2}$.  But  if $C$ is the maximum of $\|\nabla f\|$ on $\{f\leq f(p_{j-1})+1\}$, $\gamma_n|_{{[r_1,r_2]}}$ would then have length at most $\frac{2C\ep}{\delta^2}$, which if we choose $\ep$ sufficiently small is a contradiction with the fact that 
$\eta|_{[r_1,r_2]}$ needs to have length at least $D$.  This contradiction shows that we must indeed have $x_0=p_{j-1}$.

The same argument shows that if $\tau_{n,j}\to -\infty$ then $x_0=p_j$. Moreover, applying the same argument to the sequence $\{\gamma_n(t_n)\}_{n=1}^{\infty}$ in place of $\{\gamma_n(0)\}_{n=1}^{\infty}$ shows that, if $k$ is chosen so that $f(p_k)\leq f(x_T)\leq f(p_{k-1})$, then $x_T=\lim_{n\to\infty}\gamma_n(t_n)$ is given by \[ x_T=\left\{\begin{array}{ll} \gamma^k(\tau_T) & \mbox{ if }\lim_{n\to\infty}\tau_{n,k}-t_n=-\tau_T \\ p_{k-1}&\mbox{ if } \lim_{n\to\infty}\tau_{n,k}-t_n=+\infty \\ p_{k}&\mbox{ if } \lim_{n\to\infty}\tau_{n,k}-t_n=-\infty \end{array}\right.\] (and of course we may and do pass to a subsequence such that one of the above three alternatives holds).

So we can now check case-by-case based on the number $\nu$ of trajectories that appear in the limit and on the behavior of the sequences $\{\tau_{n,j}\}_{n=1}^{\infty}$ that, having passed to this subsequence, either $\{(\gamma_n,t_n)\}_{n=1}^{\infty}$ converges in $\overline{\tilde{\mathcal{M}}(p,q;f)\times(0,\infty)}$ or else $\{E_1(\gamma_n,t_n)\}_{n=1}^{\infty}$ converges to a point in one of the sets (i)-(iv) in the statement of the lemma.

First suppose that $\nu=1$.  If neither $\{\tau_{n,1}\}_{n=1}^{\infty}$ nor $\{t_n-\tau_{n,1}\}_{n=1}^{\infty}$ converges in $\mathbb{R}$ then it follows from the last few paragraphs that both $x_0=\lim_{n\to\infty}\gamma_n(0)$ and $x_T=\lim_{n\to\infty}\gamma_n(t_n)$ converge to $p$ or $q$ and so $E_1(\gamma_n,t_n)$ converges to a point of (iv) (allowing the possibilities $a=b=p$ or $a=b=q$).  If $\{\tau_{n,1}\}_{n=1}^{\infty}$ converges, say to $-\tau_0$, then since $\sigma_{\tau_{n,1}}\gamma_n\to \gamma^1$ it follows that $\gamma_n\to \sigma_{\tau_0}\gamma^1\in \tilde{\mathcal{M}}(p,q;f)$.  Thus  if $\{t_n\}_{n=1}^{\infty}$ converges to a positive real number $T$ then $\{(\gamma_n,t_n)\}_{n=1}^{\infty}$ converges to a point (namely $(\sigma_{\tau_0}\gamma^1,T)$) of $\tilde{\mathcal{M}}(p,q;f)\times(0,\infty)$; if $t_n\to 0$ then (as follows directly from the formula for $\psi_4$) $\{(\gamma_n,t_n)\}_{n=1}^{\infty}$ lies in  the image of $\psi_4$ for large $n$ and finally converges to a point of $C_4$; and if $\{t_n\}_{n=1}^{\infty}$ diverges to $\infty$ then $\{(\gamma_n,t_n)\}_{n=1}^{\infty}$ similarly converges to a point of $C_6$.  The only remaining possibility when $\nu=1$ is that $\{\tau_{n,1}\}_{n=1}^{\infty}$ diverges but
$\{t_n-\tau_{n,1}\}_{n=1}^{\infty}$ converges, say to $\tau_T$.  So in this case $t_n\to\infty$ and 
 $\sigma_{t_n}\gamma\to \sigma_{\tau_T}\gamma^1$, in view of which $\{(\gamma_n,t_n)\}_{n=1}^{\infty}$ lies in the image of $\psi_5$ for large $n$ and converges to the element $\sigma_{\tau_T}\gamma^1$ of $C_5$.
 
Now suppose $\nu=2$; thus the trajectories $\gamma_n$ converge weakly to the broken trajectory $(\gamma^1,\gamma^2)$, where for some $r\in Crit(f)$ distinct from $p$ and $q$, $\gamma^1\in \tilde{\mathcal{M}}(p,r;f)$ and $\gamma^2\in \tilde{\mathcal{M}}(r,q;f)$.  Now the analysis above shows that, where $x_0=\lim_{n\to\infty}\gamma_n(0)$ and $x_T=\lim_{n=0}^{\infty}\gamma_n(t_n)$, we have $x_0,x_T\in \{p,q,r\}\cup\gamma^1(\R)\cup\gamma^2(\R)$.  If either $x_0$ or $x_T$ belongs to $\{p,q,r\}$ then $E_1(\gamma_n,t_n)$ converges to a point in a set in (iv) of the statement of the lemma.  Also, if $x_0=x_T$, then $E_1(\gamma_n,t_n)$ converges to a point in a set in (iii) of the statement of the lemma.  Thus we may assume that $x_0$ and $x_T$ are distinct points, each lying on $\gamma^1(\R)\cup\gamma^2(\R)$.  Also, in the case that  both $x_0,x_T\in \gamma^1(\R)$,  if $|r|_f>|q|_f+1$ then $E_1(\gamma_n,t_n)$ converges to a point in a set in (i) of the statement of the lemma; the same also holds if $x_0,x_T\in \gamma^2(\R)$ and $|r|_f<|p|_f-1$.  So if $x_0,x_T\in \gamma^1(\R)$ we may assume that $|r|_f=|q|_f+1$, and if $x_0,x_T\in \gamma^2(\R)$ we may assume that $|r|_f=|p|_f-1$.

Suppose that $x_0,x_T\in \gamma^1(\R)$.  As noted earlier, this implies that the sequences $\{\tau_{n,1}\}_{n=1}^{\infty}$ and $\{\tau_{n,1}-t_n\}_{n=1}^{\infty}$ both converge, say to $-\tau_0$ and $-\tau_T$, respectively, and in this case we have $x_0=\gamma^1(\tau_0)$ and $x_T=\lim_{n\to\infty}\gamma^1(\tau_T)$, so $\tau_T>\tau_0$ since $x_T\neq x_0$.  Recall that in defining the collar $\psi_2$ for $C_2$ we made a choice of regular values $a$ and $b$ such that $f(q)<a<f(r)<b<f(q)$.  By the last sentence of \cite[Proposition 2.57]{S93}, for large $n$ the equivalence class $[\gamma_n]$ of $\gamma_n$ will lie in the image of the gluing map (\ref{glue}); thus there will be $\gamma^{1}_{n}\in \tilde{\mathcal{M}}(p,r;f)$ and  $\gamma^{2}_{n}\in \tilde{\mathcal{M}}(r,q;f)$ such that $f(\gamma^{1}_{n})=b$, $f(\gamma^{2}_{n})=a$, and sequences of real numbers $\rho_n,u_n$ such that $\rho_n\to\infty$ and \[ \gamma_n=\sigma_{u_n}\left(\gamma^{1}_{n}\#_{\rho_n}\gamma^{2}_{n}\right),\] with $\rho_n$ remaining in an interval $[\bar{\rho},\infty)$ and $([\gamma^{1}_{n}],[\gamma^{2}_{n}])$ remaining in a fixed compact subset of $\mathcal{M}(p,r;f)\times \mathcal{M}(r,q;f)$.  

Consequently, keeping in mind that the formula for $\psi_2$ used the identification of $\tilde{\mathcal{M}}(p,r;f)$ with $\mathcal{M}(p,r;f)\times\R$ determined by the regular value $b$, we obtain that for large $n$, \[ (\gamma_n,t_n)=\psi_2\left(\rho_{n}^{-1},\sigma_{\rho_n+u_n}\gamma_{n}^{1},[\gamma^{2}_{n}],t_n\right),\] and in particular our sequence eventually enters and never leaves the collar around $C_2$.  By considering the properties of the function $\bar{E}_1$ on the image of $C_2$, the weak convergence properties of the $\gamma_n$ then imply that, as $n\to\infty$, \[ \psi_{2}^{-1}(\gamma_n,t_n)\to (0,\sigma_{\tau_0}\gamma^1,[\gamma^2],\tau_T-\tau_0)\in \{0\}\times \tilde{\mathcal{M}}(p,r;f)\times\mathcal{M}(r,q;f)\times(0,\infty),\] proving that the sequence $\{(\gamma_n,t_n)\}_{n=1}^{\infty}$ converges to a point of $C_2\subset \overline{\tilde{\mathcal{M}}(p,q;f)\times (0,\infty)}$ when $\nu=2$ and $x_0,x_T\in \gamma^1(\mathbb{R})$.

In the case that $\nu=2$ and $x_0,x_T\in \gamma^2(\R)$, an identical analysis based on the sequences $\{\tau_{n,2}\}_{n=1}^{\infty}$ and 
$\{\tau_{n,2}-t_n\}_{n=1}^{\infty}$ shows that $\{(\gamma_n,t_n)\}_{n=1}^{\infty}$ converges to a point of $C_1\subset \overline{\tilde{\mathcal{M}}(p,q;f)\times (0,\infty)}$.

The remaining case when $\nu=2$ is where $x_0\in \gamma^1(\R)$ and $x_T\in \gamma^2(\R)$ (since $t_n>0$ the opposite is impossible).  Then the sequence $\{\tau_{n,1}\}_{n=1}$ converges (say to $-\tau_0$) since $x_0\in \gamma^1(\R)$, and the sequence $\{\tau_{n,2}-t_n\}_{n=1}^{\infty}$ converges (say to $-\tau_T$) since $x_T\in\gamma^2(\R)$.  For large enough $n$, the weak convergence of $\{\gamma_n\}_{n=1}^{\infty}$ and \cite[Proposition 2.57]{S93} give large real numbers $\rho_n$ and trajectories $\gamma^{1}_{n}\in \tilde{\mathcal{M}}(p,r;f)$ and $\gamma^{2}_{n}\in \tilde{\mathcal{M}}(r,q;f)$ with $f(\gamma^{1}_{n}(0))=b$ and $f(\gamma^{2}_{n}(0))=a$ such that \[ \gamma_n=\sigma_{u_n}\left(\gamma_{n}^{1}\#_{\rho_n}\gamma_{n}^{2}\right)\] for some real numbers $u_n$, with $\rho_n$ remaining in an interval $[\bar{\rho},\infty)$ and $([\gamma^{1}_{n}],[\gamma^{2}_{n}])$ remaining in a fixed compact subset of $\mathcal{M}(p,r;f)\times \mathcal{M}(r,q;f)$.    From this one obtains that, for large $n$, \[ (\gamma_n,t_n)=\psi_3\left(\rho_{n}^{-1}, \sigma_{\rho_n+u_n}\gamma_{n}^{1},\sigma_{u_n+t_n-\rho_n}\gamma_{n}^{2}\right).\]  Thus our sequence eventually enters and never leaves the collar around $C_3$, and the weak convergence properties of the sequence imply that \[ \psi_{3}^{-1}(\gamma_n,t_n)\to (0,\sigma_{\tau_0}\gamma^1,\sigma_{\tau_T}\gamma^2)\in\{0\}\times \tilde{\mathcal{M}}(p,r;f)\times\tilde{\mathcal{M}}(r,q;f).\]  This completes the proof in case $\nu=2$. 

Finally suppose that $\nu>2$. Since all of the trajectories $\gamma^j$ are nonconstant and so (by the Morse-Smale condition) $1\leq |p_j|_{f}-|p_{j+1}|_f\leq |p|_f-|q|_f-2$ for all $j$, and since $x_0,x_T\in Crit(f)\cup\gamma^1(\R)\cup\cdots\cup \gamma^{\nu}(\R)$, it is straightforward to see that in any case $(x_0,x_T)=\lim_{n\to\infty}E_n(\gamma_n,t_n)$ belongs to one of the sets (i)-(iv).
\end{proof}

\end{document}